\numberwithin{equation}{section}
\newcommand{\ba}{\begin{eqnarray}}
\newcommand{\ea}{\end{eqnarray}}
\newcommand{\Hmm}[1]{\leavevmode{\marginpar{\tiny%
$\hbox to 0mm{\hspace*{-0.5mm}$\leftarrow$\hss}%
\vcenter{\vrule depth 0.1mm height 0.1mm width \the\marginparwidth}%
\hbox to
0mm{\hss$\rightarrow$\hspace*{-0.5mm}}$\\\relax\raggedright #1}}}
\newtheorem{theorem}{Theorem}[section]
\newtheorem{lemma}[theorem]{Lemma}
\newtheorem{definition}[theorem]{Definition}
\newtheorem{remark}[theorem]{Remark}
\newtheorem{proposition}[theorem]{Proposition}
\newtheorem{example}[theorem]{Example}
\begin{document}
\title[Cheeger inequalities associated with isocapacitary constants]{Cheeger type inequalities associated with isocapacitary constants on graphs}

\author{Bobo Hua}
\address{Bobo Hua: School of Mathematical Sciences, LMNS, Fudan University, Shanghai 200433, China; Shanghai Center for Mathematical Sciences, Fudan University, Shanghai 200433, China}
\email{\href{mailto:bobohua@fudan.edu.cn}{bobohua@fudan.edu.cn}}

\author{Florentin M\"unch}
\address{Florentin M\"unch: Max Planck Institute for Mathematics in the Sciences, Leipzig 04103, Germany}
\email{\href{mailto:cfmuench@gmail.com}{cfmuench@gmail.com}}

\author{Tao Wang}
\address{Tao Wang: School of Mathematical Sciences, Fudan University, Shanghai 200433, China}
\email{\href{mailto:taowang21@m.fudan.edu.cn}{taowang21@m.fudan.edu.cn}}

\maketitle

\begin{abstract}
  In this paper, we introduce Cheeger type constants via isocapacitary constants introduced by Maz'ya to estimate first Dirichlet, Neumann and Steklov eigenvalues on a finite subgraph of a graph. Moreover, we estimate the bottom of the spectrum of the Laplace operator and the Dirichlet-to-Neumann operator for an infinite subgraph. Estimates for higher-order Steklov eigenvalues on a finite or infinite subgraph are also proved.
\end{abstract}

\noindent\textbf{Mathematics Subject Classification: }53A70, 05C50, 15A42, 39A12

\bigskip

\section{Introduction}

Let $(M, g)$ be a compact, connected, smooth Riemannian manifold with smooth boundary $\partial M$. There are three typical eigenvalue problems: the Dirichlet problem, Neumann problem and Steklov problem. Eigenvalue estimates are of interest in spectral geometry. In \cite{Cheeger1970}, Cheeger discovered a close relation between the first non-trivial eigenvalue of the Laplace-Beltrami operator on a closed manifold and the isoperimetric constant, called the Cheeger constant. Estimates of this type are called Cheeger estimates. 

For a compact manifold $M$ with boundary $\partial M$, the Steklov problem is to study the eigenvalues of the Dirichlet-to-Neumann operator $\Lambda$, called the DtN operator for short, which is defined as
\begin{align*}
 \Lambda: H^{\frac12}(\partial M) &\to H^{-\frac12}(\partial M), \\
 f &\mapsto \Lambda (f):= \frac{\partial u_f}{\partial n},
\end{align*}
where $u_f$ is the harmonic extension of $f$ to $M$, $n$ is the outward unit normal on the boundary $\partial M$ and $H^{\frac12}(\partial M)$, $H^{-\frac12}(\partial M)$ are Sobolev spaces. The DtN operator is closely related to the Calder\'on problem \cite{Calder1980} of determining the anisotropic electrical conductivity of a medium in the Euclidean space by taking voltage and current measurements at the boundary of the medium. This makes it useful for applications to electrical impedance tomography, see \cite{LU2001, Uhlmann2014} for more details.

The Steklov problem has been extensively studied on Riemannian manifolds, see e.g. \cite{GL2021, CEG2011, Escobar1999, FS2011, FS2016, Hassannezhad2011}. We refer to a comprehensive survey \cite{CGGS2024}. Especially, Escobar \cite{Escobar1997} and Jammes \cite{Jammes2015} introduced two different types of Cheeger estimates for the first non-trivial eigenvalue of the DtN operator and proved corresponding Cheeger estimates. 

In recent years, many mathematicians are interested in the analysis on graphs. Spectral graph theory is an important field in the literature \cite{BH2012, CDS1995, Chung1997, HH2023, HH2022, Fuji1996, MW1989, Mohar1982, Perrin2019, Chung1989, RVW2002, H2019, MSS2015, JTYZZ2021}. The Cheeger estimate was first generalized to graphs by Dodziuk \cite{Dodziuk1984}, Alon and Milman \cite{AM1985}, respectively. Inspired by their works, there were various Cheeger estimates on graphs, see e.g. \cite{BHJ2014, DK1986, BKW2015, Liu2015, TH2018, HH2018, LGT2014, Perrin2021, HW2020}.

The first author, Huang and Wang \cite{HHW2017} defined the DtN operator on a finite subgraph of a graph and proved two types of Cheeger estimates similar to \cite{Escobar1997, Jammes2015} for the first non-trivial Steklov eigenvalue. Hassannezhad and Miclo \cite{HM2020} independently proved the Cheeger estimate similar to \cite{Jammes2015} and generalized it to higher-order Cheeger estimates for the Steklov problem. In \cite{HHW2022}, the first author, Huang and Wang used the exhaustion methods to define the DtN operator on an infinite subgraph of a graph and proved the Cheeger estimate for the bottom of the spectrum and higher-order Cheeger estimates for higher-order eigenvalues of the DtN operator.

In this paper, we introduce Cheeger constants using the capacity to estimate first non-trivial Dirichlet, Neumann and Steklov eigenvalues of graphs. Using capacity to estimate eigenvalues can be traced back to the works of Maz'ya \cite{M1964, M2009, M1962}. Maz'ya proved that if $\Omega$ is a subdomain of a $n$-dimensional Riemannian manifold $M$ and $\lambda_1(\Omega)$ is the first Dirichlet eigenvalue, then
\[
 \frac14\Gamma(\Omega) \leq \lambda_1(\Omega) \leq \Gamma(\Omega),
\]
where 
\[
 \Gamma(\Omega):= \inf_{F \subset \subset \Omega}\frac{\mathrm{Cap}(F, \Omega)}{|F|},
\]
with
\[
 \mathrm{Cap}(F, \Omega):= \inf\left\{\int_{\Omega}|\nabla u|^2 dx: u \in C^{\infty}_0(\Omega), u \geq 1 \text{ on }F\right\}.
\]
We will prove similar estimates on graphs, and apply them to the Steklov problem.

We recall some basic definitions on graphs. Let $G = (V, E, w, m)$ be a (possibly infinite) undirected, simple graph with the set of vertices $V$, the set of edges $E$, the edge weight $w: E \to \mathbb{R}_+$ and the vertex weight $m: V \to \mathbb{R}_+$. We write $m(A):= \sum_{x \in A}m(x)$ for the volume of any subset $A \subseteq V$. Two vertices $x, y$ are called neighbors, denoted by $x \sim y$, if there is an edge connecting $x$ and $y$, i.e., $\{x, y\} \in E$. We only consider locally finite graphs, i.e., each vertex only has finitely many neighbors. A graph is called connected if for any $x, y \in V$ there is a path $\{z_i\}_{i = 0}^n \subseteq V$ connecting $x$ and $y$, i.e.,
\[
 x = z_0 \sim z_1 \sim \cdots \sim z_n = y.
\]
We call the quadruple $G = (V, E, w, m)$ a weighted graph.

For any subset $\Omega \subseteq V$, we define the vertex boundary of $\Omega$ as
\[
 \delta \Omega:= \{y \in V \setminus \Omega: \text{there exists } x \in \Omega \text{ such that } x \sim y\}.
\]
Set $\overline{\Omega}:= \Omega \cup \delta \Omega$. In this paper, we always assume that $\overline{\Omega}$ is connected. Given $\Omega_1, \Omega_2 \subseteq V$, the set of edges between $\Omega_1$ and $\Omega_2$ is written as
\[
 E(\Omega_1, \Omega_2):= \{\{x, y\} \in E: x \in \Omega_1, y \in \Omega_2\}.
\]
Given any set $F$, we denote by $\mathbb{R}^{F}$ the set of all real functions defined on $F$. For any subset $\Omega \subseteq V$ and any $f \in \mathbb{R}^{\overline{\Omega}}$, the Laplacian of $f$ is defined as
\[
 \Delta f(x):= \frac{1}{m(x)}\sum_{y \sim x}w(x, y)(f(y) - f(x)), \quad x \in \Omega,
\]
and the outward normal derivative of $f$ at $z \in \delta \Omega$ is defined as
\[
 \frac{\partial f}{\partial n}(z):= \frac{1}{m(z)}\sum_{x \in \Omega: x \sim z}w(x, z)(f(z) - f(x)).
\]
We write $\mathcal{L}:= -\Delta$. For any subset $X \subseteq V$, we restrict $w$ to $E(X, \overline{X})$ and $m$ to $\overline{X}$, still denoted by $w$ and $m$ for simplicity. Then we define a graph
\begin{equation}\label{DefOfUsedGraph}
 G_{X} = (\overline{X}, E(X, \overline{X}), w, m).
\end{equation}
Note that edges between vertices in $\delta X$, $E(\delta X, \delta X)$, are removed, i.e., $w(x, y) = 0$ for any $\{x, y\} \notin E(X, \overline{X})$.

In analogy to the continuous case, one can study three eigenvalue problems in the discrete setting. For any finite subset $\Omega \subseteq V$, the Dirichlet problem on $\Omega$ is defined as 
\begin{equation*}
 \begin{cases}
  \mathcal{L}f(x) = \lambda f(x), \quad &x \in \Omega, \\
  f(x) = 0, &x \in \delta \Omega.
 \end{cases}
\end{equation*}
We denote by $\lambda_1^D(\Omega)$ the first eigenvalue of the above Dirichlet problem. Set
\[
 \alpha_D(\Omega):= \inf_{A \subseteq \Omega}\frac{\mathrm{Cap}_{\Omega}(A)}{m(A)},
\]
where $\mathrm{Cap}_{\Omega}(A)$ is defined in \eqref{DefOfCap2} with respect to the graph $G_{\Omega}$. We prove the following theorem.
\begin{theorem}\label{CheegerInequalityForDirichlet}
 Let $G$ be a weighted graph, $\Omega \subseteq V$ be a finite subset. Then
 \begin{equation}\label{DirichletEigenvalue}
  \frac14\alpha_D(\Omega) \leq \lambda_1^D(\Omega) \leq \alpha_D(\Omega).
 \end{equation}
\end{theorem}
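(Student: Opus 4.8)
The plan is to start from the standard variational (Rayleigh quotient) description of the first Dirichlet eigenvalue on the graph $G_{\Omega}$,
\[
 \lambda_1^D(\Omega)=\inf\Big\{\tfrac{\mathcal{E}(u)}{\sum_{x}m(x)u(x)^2}\ :\ u\in\mathbb{R}^{\overline\Omega},\ u|_{\delta\Omega}=0,\ u\not\equiv0\Big\},
\]
where $\mathcal{E}(u):=\sum_{\{x,y\}\in E(\Omega,\overline\Omega)}w(x,y)(u(x)-u(y))^2=\langle\mathcal{L}u,u\rangle_m$ is the Dirichlet energy (the same energy appearing in $\mathrm{Cap}_\Omega$). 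The upper bound is the easy half. For any $A\subseteq\Omega$ take a capacity minimizer $u_A$; truncating via $u_A\mapsto\min(\max(u_A,0),1)$ we may assume $0\le u_A\le1$, $u_A=1$ on $A$, $u_A|_{\delta\Omega}=0$, with $\mathcal{E}(u_A)=\mathrm{Cap}_\Omega(A)$. Using $u_A$ as a test function and $\sum_x m(x)u_A(x)^2\ge\sum_{x\in A}m(x)=m(A)$ gives $\lambda_1^D(\Omega)\le\mathrm{Cap}_\Omega(A)/m(A)$, and taking the infimum over $A$ yields $\lambda_1^D(\Omega)\le\alpha_D(\Omega)$.

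For the lower bound I would prove the stronger statement $\mathcal{E}(f)\ge\tfrac14\alpha_D(\Omega)\sum_x m(x)f(x)^2$ for every admissible $f$; replacing $f$ by $|f|$ (which does not increase $\mathcal{E}$) we may assume $f\ge0$. Write $F_s:=\{x\in\overline\Omega:f(x)\ge s\}$. The layer-cake identity $\sum_x m(x)f(x)^2=\int_0^\infty m(F_s)\,d(s^2)$, together with the defining bound $\mathrm{Cap}_\Omega(F_s)\ge\alpha_D(\Omega)\,m(F_s)$, reduces everything to the capacitary (strong-type) inequality
\[
 \int_0^\infty \mathrm{Cap}_\Omega(F_s)\,d(s^2)\ \le\ 4\,\mathcal{E}(f).
\]
Granting this, $\alpha_D(\Omega)\sum_x m(x)f(x)^2=\alpha_D(\Omega)\int_0^\infty m(F_s)\,d(s^2)\le\int_0^\infty\mathrm{Cap}_\Omega(F_s)\,d(s^2)\le4\mathcal{E}(f)$; dividing by $\sum_x m(x)f(x)^2$ and taking the infimum over $f$ gives $\lambda_1^D(\Omega)\ge\tfrac14\alpha_D(\Omega)$.

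The capacitary inequality is the heart of the matter and the step I expect to be the main obstacle. The naive temptation is to bound each $\mathrm{Cap}_\Omega(F_s)$ by the energy of the indicator $\mathbf 1_{F_s}$, i.e. by the boundary weight $w(\partial F_s)$; this only produces $\int_0^\infty\mathrm{Cap}_\Omega(F_s)\,d(s^2)\le\sum_{\{x,y\}}w(x,y)\,|f(x)^2-f(y)^2|$, the total variation of $f^2$, which is genuinely too weak — it can exceed $\mathcal{E}(f)$ by an arbitrarily large factor (e.g. for a slowly varying but large $f$). The correct move is to test $\mathrm{Cap}_\Omega(\{f\ge t\})$ with the spread-out truncations $\eta=\big(\min(f,t)-\sigma\big)_+/(t-\sigma)$ for a lower level $\sigma<t$, giving $\mathrm{Cap}_\Omega(\{f\ge t\})\le(t-\sigma)^{-2}\,W(\sigma,t)$, where $W(\sigma,t):=\sum_{\{x,y\}}w(x,y)\big(\text{length of }[\,f(x)\wedge f(y),\,f(x)\vee f(y)]\cap[\sigma,t]\big)^2$ is the energy carried by the strip $[\sigma,t]$.

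The key structural fact making this work is a subadditivity: for any partition of $(0,\infty)$ into consecutive strips $[\sigma_k,\sigma_{k+1}]$, the overlaps are nonnegative and sum to $|f(x)-f(y)|$, so $\sum_k W(\sigma_k,\sigma_{k+1})\le\sum_{\{x,y\}}w(x,y)(f(x)-f(y))^2=\mathcal{E}(f)$. Choosing the levels $\sigma_k$ geometrically, bounding $\mathrm{Cap}_\Omega(F_s)$ on each interval by the capacity at its left endpoint, and integrating against $d(s^2)$ then yields the inequality with an explicit finite constant. I expect the remaining difficulty to be purely the bookkeeping of that constant down to the sharp value $4$: a crude dyadic scheme already gives a finite constant (enough for the qualitative estimate), while optimizing the level spacing and passing to a continuum of levels is what drives it to $4$. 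This multiscale summation — rather than any single level set, which a function spread over many scales shows to be insufficient — is precisely the mechanism behind the factor $\tfrac14$.
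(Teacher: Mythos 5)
Your upper bound is exactly the paper's argument, and your reduction of the lower bound to the strong-type capacitary inequality $\int_0^\infty \mathrm{Cap}_\Omega(F_s)\,d(s^2)\le 4\,\mathcal{E}_\Omega(f,f)$ identifies precisely the paper's key lemma: its ``co-area formula'' (Lemma 3.2, quoted from [SS2019]), stated there as $2\mathcal{E}_\Omega(f,f)\ge\int_0^\infty t\,\mathrm{Cap}_\Omega(\{f>t\},\{f\le 0\})\,dt$, which is the same inequality since $d(s^2)=2s\,ds$. So the architecture is right. The genuine gap is your proof of that inequality: the truncation-plus-subadditivity scheme does not reach the constant $4$, and your expectation that ``optimizing the level spacing and passing to a continuum of levels'' drives it to $4$ is unsubstantiated and fails for the schemes you describe. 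Concretely, with geometric levels $\sigma_k=q^k$, your two estimates combine to $\int_0^\infty\mathrm{Cap}_\Omega(F_s)\,d(s^2)\le \frac{q^2(q+1)}{q-1}\,\mathcal{E}_\Omega(f,f)$, and $\min_{q>1}\frac{q^2(q+1)}{q-1}\approx 11.09$, attained at $q$ the golden ratio; even the continuum variant $\sigma(t)=\theta t$, evaluated edge by edge, stalls near $4.9$, because edges whose two values lie in a thin band at high level (i.e.\ $b-a\ll a$ for values $a<b$) are charged a factor of order $2\ln(1/\theta)/(1-\theta)^2$, which is minimized around $4.9$, not $4$. So as written your argument proves $\lambda_1^D(\Omega)\ge c\,\alpha_D(\Omega)$ with an explicit $c\approx 1/11$ --- a correct Cheeger-type bound of the right order --- but not the stated $\frac14\alpha_D(\Omega)\le\lambda_1^D(\Omega)$.

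The missing idea, which is how the paper gets $4$ (the same mechanism as the sharp constant in Hardy's inequality, which crude dyadic decompositions also miss), is to work with the equilibrium potentials rather than truncations of $f$: for each $t>0$ let $h_t$ be the capacity minimizer for the pair $(\{f>t\},\{f\le 0\})$. Green's formula together with the signs of $\mathcal{L}h_t$ and $\frac{\partial h_t}{\partial n}$ on the plateaus (where $h_t\in\{0,1\}$, so that $th_t\le f$ where $\mathcal{L}h_t\ge 0$ and $th_t\ge f$ where $\mathcal{L}h_t\le 0$) gives the pointwise-in-$t$ comparison $t\,\mathrm{Cap}_\Omega(\{f>t\},\{f\le0\})=t\,\mathcal{E}_\Omega(h_t,h_t)\le\mathcal{E}_\Omega(h_t,f)$. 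Integrating in $t$ and applying Cauchy--Schwarz to $\mathcal{E}_\Omega\bigl(f,\int_0^\infty h_t\,dt\bigr)$, the crucial point is the identity $\mathcal{E}_\Omega(h_s,h_t)=\mathcal{E}_\Omega(h_t,h_t)$ for $s<t$ (valid because $h_s\equiv h_t$ on the supports of $\mathcal{L}h_t$ and $\frac{\partial h_t}{\partial n}$), which yields $\mathcal{E}_\Omega\bigl(\int_0^\infty h_t\,dt,\int_0^\infty h_t\,dt\bigr)=2\int_0^\infty t\,\mathrm{Cap}_\Omega(\{f>t\},\{f\le0\})\,dt$ and closes the estimate with constant exactly $2$, i.e.\ $4$ in your $d(s^2)$ normalization. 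If you only want a universal-constant version of the theorem, your multiscale argument suffices; for the constant $\frac14$ you need this (or an equally sharp) duality argument in place of the level-spacing optimization.
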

\begin{remark} 
\
 \begin{itemize}
  \item[1.] Our estimate shows that $\lambda_1^D(\Omega)$ and $\alpha_D(\Omega)$ are of the same order, which is better than the classical Cheeger estimate for normalized Laplacian $\frac{h^2(\Omega)}{2} \leq \lambda_1^D(\Omega) \leq h(\Omega)$, see e.g., \cite{Dodziuk1984, BHJ2014}, where $h(\Omega)$ is the Cheeger constant.
  \item[2.] The Cheeger estimate for general Laplacian depends on $\max\limits_{x \in \Omega}\mathrm{Deg}(x)$, where $\mathrm{Deg}(x):= \frac{1}{m(x)}\sum_{y \sim x}w(x, y)$ is the degree of vertex $x$. But the constant in the estimate \eqref{DirichletEigenvalue} does not depend on weights. This is the key for our application to prove estimate of the first Steklov eigenvalue.
 \end{itemize}
\end{remark}
Next we use \eqref{DirichletEigenvalue} to estimate the bottom of the spectrum of an infinite graph $G$. Let $\{W_i\}_{i=1}^{\infty}$ be an exhaustion of $G$, see Definition \ref{DefinitionOfExhaustion}. By the spectral theory, the bottom of the spectrum of the Laplacian on $G$ is given by
\[
 \lambda_1(G)= \lim_{i \to \infty}\lambda_1^D(W_i).
\]
Set
\[
 \alpha_D(G):= \inf_{\substack{A \subseteq V \\ |A| < +\infty}}\frac{\mathrm{Cap}(A)}{m(A)},
\]
where $$\mathrm{Cap}(A)= \lim_{i \to \infty}\mathrm{Cap}_{W_i}(A).$$ Then we have the following theorem.
\begin{theorem}\label{CheegerInequalityForBottom}
 For an infinite weighted graph $G$,
 \[
  \frac14\alpha_D(G) \leq \lambda_1(G) \leq \alpha_D(G).
 \]
\end{theorem}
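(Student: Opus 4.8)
The plan is to reduce everything to the finite case via Theorem~\ref{CheegerInequalityForDirichlet} applied along the exhaustion $\{W_i\}_{i=1}^{\infty}$ and then pass to the limit. For each $i$, Theorem~\ref{CheegerInequalityForDirichlet} applied to the finite set $W_i$ gives
\[
 \frac14\alpha_D(W_i) \leq \lambda_1^D(W_i) \leq \alpha_D(W_i).
\]
Since we already know $\lambda_1(G) = \lim_{i \to \infty}\lambda_1^D(W_i)$, the theorem will follow once we show that $\alpha_D(W_i) \to \alpha_D(G)$ as $i \to \infty$. This convergence of the capacitary constants is the crux of the argument.

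To prove $\lim_{i \to \infty}\alpha_D(W_i) = \alpha_D(G)$, I would use the monotonicity of capacity along the exhaustion. For any fixed finite $A \subseteq V$ and any $W_i \supseteq A$, extending a test function by zero shows that $\mathrm{Cap}_{W_i}(A)$ is non-increasing in $i$, so that $\mathrm{Cap}_{W_i}(A) \downarrow \mathrm{Cap}(A)$, consistent with the definition $\mathrm{Cap}(A) = \lim_{i \to \infty}\mathrm{Cap}_{W_i}(A)$. In particular, for every finite $A \subseteq W_i$ we have $\mathrm{Cap}_{W_i}(A) \geq \mathrm{Cap}(A)$, hence
\[
 \frac{\mathrm{Cap}_{W_i}(A)}{m(A)} \geq \frac{\mathrm{Cap}(A)}{m(A)} \geq \alpha_D(G).
\]
Taking the infimum over all $A \subseteq W_i$ yields $\alpha_D(W_i) \geq \alpha_D(G)$ for every $i$, which gives the lower bound $\liminf_{i \to \infty}\alpha_D(W_i) \geq \alpha_D(G)$.

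For the matching upper bound, I would fix $\varepsilon > 0$ and choose a finite set $A_0 \subseteq V$ with $\mathrm{Cap}(A_0)/m(A_0) < \alpha_D(G) + \varepsilon$. Since $A_0$ is finite and $\{W_i\}$ is an exhaustion in the sense of Definition~\ref{DefinitionOfExhaustion}, we have $A_0 \subseteq W_i$ for all large $i$, and $\mathrm{Cap}_{W_i}(A_0) \to \mathrm{Cap}(A_0)$. Hence for all sufficiently large $i$,
\[
 \alpha_D(W_i) \leq \frac{\mathrm{Cap}_{W_i}(A_0)}{m(A_0)} < \alpha_D(G) + 2\varepsilon,
\]
so $\limsup_{i \to \infty}\alpha_D(W_i) \leq \alpha_D(G) + 2\varepsilon$; letting $\varepsilon \to 0$ gives $\limsup_{i \to \infty}\alpha_D(W_i) \leq \alpha_D(G)$. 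Combining the two bounds proves $\lim_{i \to \infty}\alpha_D(W_i) = \alpha_D(G)$, and passing to the limit in the finite-volume inequality above completes the proof.

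The main obstacle I anticipate is the careful justification of the capacity monotonicity $\mathrm{Cap}_{W_i}(A) \geq \mathrm{Cap}(A)$ together with the convergence $\mathrm{Cap}_{W_i}(A) \to \mathrm{Cap}(A)$ for fixed $A$: one must check that extension by zero of an admissible function for $G_{W_i}$ remains admissible for $G_{W_{i+1}}$ without increasing the Dirichlet energy, taking into account that the edge sets $E(W_i, \overline{W_i})$ differ between consecutive graphs in the exhaustion. Once this monotone convergence is secured, the interchange of $\lim_i$ and $\inf_A$ is handled cleanly by the two-sided estimate above, and no further analytic input is needed.
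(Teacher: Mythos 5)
Your proposal is correct and follows essentially the same route as the paper: apply Theorem \ref{CheegerInequalityForDirichlet} to each $W_i$, use $\lambda_1(G)=\lim_{i\to\infty}\lambda_1^D(W_i)$, and prove $\lim_{i\to\infty}\alpha_D(W_i)=\alpha_D(G)$ via the monotonicity $\mathrm{Cap}_{W_i}(A)\downarrow \mathrm{Cap}(A)$. The only cosmetic difference is that for the $\limsup$ direction you use an $\varepsilon$-nearly-optimal set $A_0$, where the paper uses an exact minimizer $A_i\subseteq W_i$ for its $\liminf$ direction; both are valid, and your zero-extension check of the energy (including the removed edges within $\delta W_i$) is exactly the justification the paper leaves implicit.
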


As a corollary, we prove that $\lambda_1(G) = 0$ for a recurrent infinite weighted graph $G$, see Proposition \ref{ZeroBottomAndRecurrent}. For the transient case, we give an example of estimating the bottom of the spectrum which was considered in \cite{HKSW2023}, see Example \ref{TransientCase}.

For any finite subset $\Omega \subseteq V$, the Neumann problem on $\Omega$ is defined as 
\begin{equation*}
 \begin{cases}
  \mathcal{L}f(x) = \lambda f(x), \quad &x \in \Omega, \\
  \frac{\partial f}{\partial n}(x) = 0, &x \in \delta \Omega.
 \end{cases}
\end{equation*}
We denote by $\lambda_1^N(\Omega)$ the first non-trivial eigenvalue of the above Neumann problem. For
\[
 \alpha_N(\Omega):= \inf_{A, B \subseteq \Omega}\frac{\mathrm{Cap}_{\Omega}(A, B)}{m(A) \wedge m(B)},
\]
where $a \wedge b:= \min\{a, b\}$ and $\mathrm{Cap}_{\Omega}(A, B)$ is defined in \eqref{DefOfCap}, we have the following refined estimate, compared to \cite{HH2018}.
\begin{theorem}\label{CheegerInequalityForNeumann}
 Let $G$ be a weighted graph, $\Omega \subseteq V$ be a finite subset with $|\Omega| \geq 2$,  where $|\cdot|$ denotes the cardinality of a set. Then
 \[
  \frac18\alpha_N(\Omega) \leq \lambda_1^N(\Omega) \leq 2\alpha_N(\Omega).
 \]
\end{theorem}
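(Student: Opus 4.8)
The plan is to prove the two inequalities separately, using the variational description of the first non-trivial Neumann eigenvalue on the reduced graph $G_\Omega$. Writing $Q(f)=\sum_{\{x,y\}\in E(\Omega,\overline{\Omega})}w(x,y)(f(x)-f(y))^2$ for the Dirichlet energy and $\|f\|_m^2=\sum_{x\in\overline{\Omega}}m(x)f(x)^2$, summation by parts gives $\langle\mathcal{L}f,f\rangle_m=Q(f)$ on $G_\Omega$, constants lie in the kernel, and the standard variational characterization reads
\[
 \lambda_1^N(\Omega)=\min_{f}\frac{Q(f)}{\min_{c\in\mathbb{R}}\|f-c\|_m^2},
\]
the minimum being over non-constant $f\in\mathbb{R}^{\overline{\Omega}}$, with $\min_c\|f-c\|_m^2$ attained at the weighted mean. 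I would first record this identity.

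For the upper bound $\lambda_1^N(\Omega)\le 2\alpha_N(\Omega)$, fix $A,B\subseteq\Omega$ and let $u$ be the capacitor realizing $\mathrm{Cap}_\Omega(A,B)$, normalized so that $u\equiv1$ on $A$, $u\equiv0$ on $B$ and $0\le u\le1$ (by truncation). Using $u$ as a test function, $Q(u)=\mathrm{Cap}_\Omega(A,B)$, while keeping only the $A$- and $B$-terms and minimizing in $c$ gives
\[
 \min_{c}\|u-c\|_m^2\ \ge\ \min_c\bigl(m(A)(1-c)^2+m(B)c^2\bigr)=\frac{m(A)m(B)}{m(A)+m(B)}\ \ge\ \tfrac12\bigl(m(A)\wedge m(B)\bigr).
\]
Inserting this into the Rayleigh quotient and taking the infimum over $A,B$ yields the claimed factor $2$.

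The lower bound $\tfrac18\alpha_N(\Omega)\le\lambda_1^N(\Omega)$ is the substantial direction. Let $f$ be an eigenfunction for $\lambda_1^N(\Omega)$, so $\sum_x m(x)f(x)=0$, and let $t_0$ be an $m$-median of $f$, i.e. both $M_+:=\{f>t_0\}$ and $M_-:=\{f<t_0\}$ have $m$-measure at most $\tfrac12 m(\overline{\Omega})$. Replacing $f$ by $f-t_0$ leaves $Q$ unchanged and, since $f$ has mean zero, only enlarges the squared norm, $\|f-t_0\|_m^2=\|f\|_m^2+t_0^2 m(\overline{\Omega})\ge\|f\|_m^2$; hence $Q(f-t_0)/\|f-t_0\|_m^2\le Q(f)/\|f\|_m^2=\lambda_1^N(\Omega)$. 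Splitting $f-t_0=g-h$ into its nonnegative parts $g,h$ with disjoint supports in $M_+,M_-$, a per-edge case check gives the superadditivity $Q(f-t_0)\ge Q(g)+Q(h)$, while $\|f-t_0\|_m^2=\|g\|_m^2+\|h\|_m^2$. The mediant inequality then forces one part, say $g$, to satisfy $Q(g)/\|g\|_m^2\le\lambda_1^N(\Omega)$; moreover $g$ is supported on the minority set $M_+$ and vanishes on $B:=\{g=0\}=\overline{\Omega}\setminus M_+$, which carries at least half the volume.

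It remains to convert the bound on the Rayleigh quotient of the single nonnegative function $g$ into a favourable admissible pair, which is the capacitary co-area step, exactly as in the proof of Theorem \ref{CheegerInequalityForDirichlet}. Considering the superlevel sets $A_t=\{g\ge t\}$, the competitor $\tfrac1t\min(g,t)$ (and its localized refinements between consecutive levels) for $\mathrm{Cap}_\Omega(A_t,B)$, and the layer-cake identity $\|g\|_m^2=\int_0^\infty 2t\,m(A_t)\,dt$, one extracts a level $t$ with $\mathrm{Cap}_\Omega(A_t,B)\le C\,\lambda_1^N(\Omega)\,m(A_t)$. Since $m(A_t)\le m(M_+)\le\tfrac12 m(\overline{\Omega})\le m(B)$, we have $m(A_t)\wedge m(B)=m(A_t)$, so $(A_t,B)$ is admissible for $\alpha_N(\Omega)$ (after the routine adjustment of the level sets to lie in $\Omega$, the $\delta\Omega$-vertices being controlled by the Neumann condition), giving $\alpha_N(\Omega)\le C\,\lambda_1^N(\Omega)$. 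The crux, and the main obstacle, is this co-area lemma: the naive single-level competitor is wasteful at high levels, so one must pass to localized capacitors across consecutive levels and sum (the annuli telescoping so each edge is counted boundedly) or run the sharp continuous Maz'ya computation; careful bookkeeping of the constant through the median splitting and this co-area estimate is what delivers the factor $\tfrac18$.
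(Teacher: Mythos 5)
Your overall plan---a direct variational proof on $G_\Omega$, capacitor as test function for the upper bound, median splitting plus the capacitary co-area inequality for the lower bound---is a genuinely different route from the paper's, which never works with $\lambda_1^N(\Omega)$ directly: the paper first proves $\frac18\widetilde{\alpha}_{\widetilde{\mathcal{L}}}\leq\lambda_1^{\widetilde{\mathcal{L}}}\leq 2\widetilde{\alpha}_{\widetilde{\mathcal{L}}}$ for the plain Laplacian on $G_\Omega$ and then transfers this to the Neumann problem by shrinking the boundary vertex weights, $m^{(k)}|_{\delta\Omega}=m/k$, using \cite[Proposition 3]{HM2020} to get $\lambda_1^{(k)}\to\lambda_1^N(\Omega)$ and Lemma \ref{MonotoneOfCapacity} to get $\widetilde{\alpha}_{\widetilde{\mathcal{L}}}^{(k)}\to\alpha_N(\Omega)$. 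However, your proof as written has a genuine error at its foundation: the variational characterization is wrong. With the denominator $\min_c\|f-c\|_m^2$ taken over $\overline{\Omega}$, the quotient characterizes exactly $\lambda_1^{\widetilde{\mathcal{L}}}$, whose equation at a boundary vertex $z\in\delta\Omega$ reads $\frac{\partial f}{\partial n}(z)=\lambda f(z)$, not the Neumann condition $\frac{\partial f}{\partial n}(z)=0$. By Green's formula \eqref{GreenFormula}, the Neumann quotient has denominator over $\Omega$ only, $\min_c\|f-c\|_{l^2(\Omega)}^2$, the Neumann condition arising as the natural condition when minimizing the energy over boundary values. Since $\min_c\|f-c\|_{l^2(\Omega)}^2\leq\min_c\|f-c\|_{l^2(\overline{\Omega})}^2$ for every $f$, one has $\lambda_1^N(\Omega)\geq\lambda_1^{\widetilde{\mathcal{L}}}$, so your upper-bound computation bounds the wrong eigenvalue and does not yield $\lambda_1^N(\Omega)\leq2\alpha_N(\Omega)$. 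In the lower bound the same slip bites twice: the identity $\|f-t_0\|_m^2=\|f\|_m^2+t_0^2m(\overline{\Omega})$ requires $\sum_{\overline{\Omega}}fm=0$, whereas a Neumann eigenfunction satisfies only $\sum_{\Omega}fm=0$; and your parenthetical ``routine adjustment of the level sets to lie in $\Omega$'' is precisely where the argument leaks, because your layer-cake identity produces masses $m(A_t)$ of sets $A_t\subseteq\overline{\Omega}$, and the boundary mass $m(A_t\cap\delta\Omega)$ is not comparable to anything appearing in $\alpha_N(\Omega)$. Absorbing exactly this boundary mass is what the paper's $m/k$ limit is for.

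The good news is that all three defects are cured simultaneously by the correct characterization, and the repaired argument is then a valid proof that bypasses the paper's approximation machinery entirely. Work throughout with $l^2(\Omega)$: take $t_0$ an $m$-median of $f|_\Omega$, so $\|f-t_0\|_{l^2(\Omega)}^2=\|f\|_{l^2(\Omega)}^2+t_0^2m(\Omega)$; your splitting $f-t_0=g-h$ with energy superadditivity and additivity of $\|\cdot\|_{l^2(\Omega)}^2$ goes through (modulo the degenerate case where one part vanishes identically on $\Omega$, in which case the full quotient bounds the other part directly). For the selected part $g$, your worry that the co-area step is ``the crux and main obstacle'' is unfounded within this paper: Lemma \ref{CoareaFormula} is exactly the inequality $2\mathcal{E}_\Omega(g,g)\geq\int_0^\infty t\,\mathrm{Cap}_\Omega(\{g>t\},\{g\leq0\})\,dt$, already proved and used for Theorem \ref{CheegerInequalityForDirichlet}; no level selection or telescoping annuli is needed---one just integrates. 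Then monotonicity of capacity gives $\mathrm{Cap}_\Omega(\{g>t\},\{g\leq0\})\geq\mathrm{Cap}_\Omega(\{g>t\}\cap\Omega,\{g\leq0\}\cap\Omega)\geq\alpha_N(\Omega)\,m(\{g>t\}\cap\Omega)$, the median choice resolving the minimum of the two masses, and the layer cake over $\Omega$ yields $\lambda_1^N(\Omega)\geq\frac14\alpha_N(\Omega)$ via your nodal-part route---even better than the claimed $\frac18$. The upper bound likewise goes through verbatim with the $\Omega$-denominator since $A,B\subseteq\Omega$. So the proposal is repairable into a correct and arguably cleaner direct proof, but the misidentified Rayleigh quotient is a real gap, not a cosmetic one.
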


For any finite subset $\Omega \subseteq V$, we define the DtN operator on $\Omega$, denoted by $\Lambda_{\Omega}$, as
\begin{align*}
 \Lambda_{\Omega}: \mathbb{R}^{\delta \Omega} &\to \mathbb{R}^{\delta \Omega} \\
 f &\mapsto \Lambda_{\Omega}f:= \frac{\partial u_f}{\partial n},
\end{align*}
where $u_f$ is the harmonic extension of $f$ to $\Omega$. The Steklov problem on $\Omega$ is defined as
\begin{equation*}
 \begin{cases}
  \mathcal{L}f(x) = 0, \quad &x \in \Omega, \\
  \frac{\partial f}{\partial n}(x) = \lambda f(x), &x \in \delta \Omega.
 \end{cases}
\end{equation*}
We denote by $\sigma_1(\Omega)$ the first non-trivial Steklov eigenvalue. For $|\delta \Omega| \geq 2$, set
\[
 \alpha_S(\Omega):= \inf_{A, B \subseteq \delta \Omega}\frac{\mathrm{Cap}_{\Omega}(A, B)}{m(A) \wedge m(B)}.
\]
We have the following estimate.
\begin{theorem}\label{CheegerInequalityForSteklov}
 Let $G$ be a weighted graph, $\Omega \subseteq V$ be a finite subset with $|\delta \Omega| \geq 2$. Then
 \begin{equation}\label{CIFS}
  \frac18\alpha_S(\Omega) \leq \sigma_1(\Omega) \leq 2\alpha_S(\Omega).
 \end{equation}
\end{theorem}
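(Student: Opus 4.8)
The plan is to run the variational method in analogy with the Neumann case of Theorem~\ref{CheegerInequalityForNeumann}, replacing the interior weight and mean by the boundary weight and mean on $\delta\Omega$. Write $\mathcal{E}(\phi)=\tfrac12\sum_{x\sim y}w(x,y)(\phi(x)-\phi(y))^2$ for the Dirichlet energy on $G_\Omega$ and $\langle\phi,\psi\rangle_{\delta\Omega}=\sum_{z\in\delta\Omega}m(z)\phi(z)\psi(z)$. First I would record the two facts that make the Steklov spectrum accessible. Green's formula gives, for the harmonic extension $u_f$ of $f\in\mathbb{R}^{\delta\Omega}$, the identity $\langle\Lambda_\Omega f,f\rangle_{\delta\Omega}=\mathcal{E}(u_f)$, while the Dirichlet principle gives $\mathcal{E}(u_f)=\min\{\mathcal{E}(\phi):\phi\in\mathbb{R}^{\overline\Omega},\ \phi|_{\delta\Omega}=f\}$. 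Since the constant function spans the kernel of $\Lambda_\Omega$, the min--max principle then yields
\[
 \sigma_1(\Omega)=\min\left\{\frac{\mathcal{E}(u_f)}{\langle f,f\rangle_{\delta\Omega}}:\ f\in\mathbb{R}^{\delta\Omega},\ \textstyle\sum_{z\in\delta\Omega}m(z)f(z)=0,\ f\neq0\right\}.
\]

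For the upper bound I would fix $A,B\subseteq\delta\Omega$ with $m(A)\le m(B)$ and let $u$ be the capacitor potential realizing $\mathrm{Cap}_\Omega(A,B)$, so that $u$ is harmonic in $\Omega$ (because $A,B\subseteq\delta\Omega$), $u=1$ on $A$, $u=0$ on $B$, and $\mathcal{E}(u)=\mathrm{Cap}_\Omega(A,B)$. Testing the Rayleigh quotient with $u|_{\delta\Omega}$ shifted by its boundary mean $\bar f$ leaves the numerator equal to $\mathcal{E}(u)$, while the denominator is the boundary variance, which I would bound below by dropping all terms off $A\cup B$:
\[
 \sum_{z\in\delta\Omega}m(z)\bigl(f(z)-\bar f\bigr)^2\ \ge\ \min_{c\in\mathbb{R}}\bigl(m(A)(1-c)^2+m(B)c^2\bigr)=\frac{m(A)m(B)}{m(A)+m(B)}\ \ge\ \tfrac12\bigl(m(A)\wedge m(B)\bigr).
\]
This gives $\sigma_1(\Omega)\le 2\,\mathrm{Cap}_\Omega(A,B)/(m(A)\wedge m(B))$, and taking the infimum over $A,B$ yields $\sigma_1(\Omega)\le 2\alpha_S(\Omega)$.

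For the lower bound I would start from a first eigenfunction $f$, extend it harmonically to $u=u_f$ on $\overline\Omega$, and subtract a weighted median $\mu$ of $f$ on $\delta\Omega$ (a value with $m(\{f>\mu\}\cap\delta\Omega)\le\tfrac12 m(\delta\Omega)$ and $m(\{f<\mu\}\cap\delta\Omega)\le\tfrac12 m(\delta\Omega)$); energies are insensitive to this shift, and since $f$ has boundary mean zero one has $\langle f,f\rangle_{\delta\Omega}\le\langle f-\mu,f-\mu\rangle_{\delta\Omega}$. Setting $v=u-\mu$ and splitting $v=v_+-v_-$ into positive and negative parts, the median choice forces $m(\{v_\pm> t\}\cap\delta\Omega)\le\tfrac12 m(\delta\Omega)\le m(B_\pm)$ for every $t>0$, where $B_\pm=\{v_\pm=0\}\cap\delta\Omega$; hence for the boundary superlevel sets $A_t^\pm=\{v_\pm\ge t\}\cap\delta\Omega$ the pair $(A_t^\pm,B_\pm)$ is admissible for $\alpha_S(\Omega)$ with $m(A_t^\pm)\wedge m(B_\pm)=m(A_t^\pm)$, so that $\mathrm{Cap}_\Omega(A_t^\pm,B_\pm)\ge\alpha_S(\Omega)\,m(A_t^\pm)$. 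The heart of the argument is then a capacitary (Maz'ya co-area) inequality on $G_\Omega$, of the same type underlying Theorem~\ref{CheegerInequalityForDirichlet}, applied to each nonnegative part $g\in\{v_+,v_-\}$ vanishing on $B_\pm$: combining the level-set bound above with the truncations $g\wedge t$ yields $\langle g,g\rangle_{\delta\Omega}\le (C/\alpha_S(\Omega))\,\mathcal{E}(g)$. Using the layer-cake identity $\langle v,v\rangle_{\delta\Omega}=\langle v_+,v_+\rangle_{\delta\Omega}+\langle v_-,v_-\rangle_{\delta\Omega}$ together with $\mathcal{E}(v_+)+\mathcal{E}(v_-)\le\mathcal{E}(v)=\mathcal{E}(u_f)$ then gives $\langle f,f\rangle_{\delta\Omega}\le (C/\alpha_S(\Omega))\,\mathcal{E}(u_f)$, i.e. $\sigma_1(\Omega)\ge\alpha_S(\Omega)/C$.

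The main obstacle I anticipate is precisely this capacitary co-area inequality and the tracking of the constant. On a graph the chain rule for $t\mapsto g\wedge t$ is only an inequality, so the passage from the pointwise level-set estimate $\mathrm{Cap}_\Omega(A_t^\pm,B_\pm)\le t^{-2}\mathcal{E}(g\wedge t)$ to an integrated bound $\langle g,g\rangle_{\delta\Omega}\le (C/\alpha_S(\Omega))\,\mathcal{E}(g)$ must be organized carefully, for instance through dyadic slab truncations $g_k$ with $\sum_k\mathcal{E}(g_k)\le\mathcal{E}(g)$, so as not to lose more than the factor recorded in the Neumann estimate; I expect the bookkeeping to yield the constant $C=8$ exactly as in Theorem~\ref{CheegerInequalityForNeumann}. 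Everything else is a transcription of the Neumann argument with $\Omega$ replaced by $\delta\Omega$ in the measure and the mean, so once the capacitary inequality is in hand the two displayed estimates close the proof and deliver $\tfrac18\alpha_S(\Omega)\le\sigma_1(\Omega)\le 2\alpha_S(\Omega)$.
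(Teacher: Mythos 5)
Your proposal is correct, but it follows a genuinely different route from the paper. The paper never works with the Steklov Rayleigh quotient directly: it rescales the interior vertex weights, setting $m^{(k)}|_{\delta\Omega}=m$, $m^{(k)}|_{\Omega}=\frac{m}{k}$, invokes \cite[Proposition 3]{HM2020} to get $\lambda_1^{(k)}\to\sigma_1(\Omega)$, applies the Laplacian estimate \eqref{CheegerEstimate} on each $G^{(k)}$, and shows (as in the proof of Theorem \ref{CheegerInequalityForNeumann} and Lemma \ref{MonotoneOfCapacity}) that $\widetilde{\alpha}_{\widetilde{\mathcal{L}}}^{(k)}\to\alpha_S(\Omega)$. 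You instead argue directly: testing with the equilibrium potential of $(A,B)$ (which is harmonic in $\Omega$ since $A,B\subseteq\delta\Omega$, so its trace's harmonic extension is itself) gives the upper bound, and applying the co-area machinery to $v_{\pm}=(u_f-\mu)_{\pm}$ with $\mu$ a weighted median on $\delta\Omega$ gives the lower bound. Two remarks on your execution. First, the ``main obstacle'' you anticipate is a non-issue: the integrated capacitary inequality you want is exactly Lemma \ref{CoareaFormula}, already proved in the paper, namely $2\mathcal{E}_{\Omega}(g,g)\geq\int_0^{\infty}t\,\mathrm{Cap}_{\Omega}(\{g>t\},\{g\leq 0\})\,dt$; combined with monotonicity of capacity (to replace the level sets in $\overline{\Omega}$ by their traces $A_t^{\pm}\subseteq\delta\Omega$ and $B_{\pm}\subseteq\delta\Omega$, which you should state explicitly) and your median bound $m(A_t^{\pm})\leq\tfrac12 m(\delta\Omega)\leq m(B_{\pm})$, it yields $\mathcal{E}_{\Omega}(v_{\pm},v_{\pm})\geq\tfrac14\alpha_S(\Omega)\|v_{\pm}\|^2_{l^2(\delta\Omega)}$ with no dyadic bookkeeping and no chain-rule loss. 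Second, because you sum over both parts using $\mathcal{E}_{\Omega}(v_+,v_+)+\mathcal{E}_{\Omega}(v_-,v_-)\leq\mathcal{E}_{\Omega}(v,v)=\sigma_1(\Omega)\|f\|^2_{l^2(\delta\Omega)}$ and $\|v\|^2_{l^2(\delta\Omega)}\geq\|f\|^2_{l^2(\delta\Omega)}$, your argument actually delivers $\sigma_1(\Omega)\geq\tfrac14\alpha_S(\Omega)$, strictly better than the stated $\tfrac18$ (the paper loses a factor $2$ in the analogous step through the ``w.l.o.g.\ $\|f^+\|^2\geq\tfrac12\|f\|^2$'' reduction, which your two-sided summation avoids). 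What each approach buys: yours is self-contained and sharper in the constant; the paper's limit scheme is a uniform device that is reused verbatim for the Neumann problem, for Lemma \ref{CheegerInequalityForDS}, and for the Hassannezhad--Miclo operator in Section 7.
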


\begin{remark}
\
 \begin{itemize}
  \item[1.] The definition of our DtN operator is slightly different from that in \cite{HM2020}. For a subset $\Omega \subseteq V$, we always consider the Neumann problem and the Steklov problem on the graph $G_{\Omega}$ defined in \eqref{DefOfUsedGraph}. But similar estimates to \eqref{CIFS} also hold for the first non-trivial eigenvalue of the DtN operator in \cite{HM2020}, see Section 7.
  \item[2.] Cheeger estimates for the Steklov problem proved in \cite{HHW2017, HM2020, HHW2022} involve two Cheeger constants $h(\Omega)$ and $h_J(\Omega)$. Theorem \ref{CheegerInequalityForSteklov} provides a single geometric quantify $\alpha_S(\Omega)$ for estimating $\sigma_1(\Omega)$, which are of same order.
  \item[3.] The Cheeger estimate in \cite{HHW2017} $\sigma_1(\Omega) \leq h_J(\Omega)$ may not be effective for some graphs. For example, for the graph of infinite line $(\mathbb{Z}, E)$ with unit edge weights and vertex weights, let $\Omega = \{1, \cdots, n-1\}$. Then for any $n \geq 2$, the Cheeger estimate proved in \cite{HHW2017} implies a trivial  upper bound estimate $\sigma_1(\Omega) \leq 1$. By applying \eqref{CIFS}, we can get a sharp upper bound estimate $\sigma_1(\Omega) \leq \frac{2}{n}$. See Example \ref{Line}. Moreover, we characterize the equality case for the upper bound estimate in \eqref{CIFS} in Proposition \ref{EqualityCase}.
  \item[4.] For a finite tree $T$ with unit edge weights and vertex weights, we regard the vertices of degree one as boundary vertices. Then the upper bound estimate $\sigma_1(T) \leq \frac{2}{\mathrm{diam}(T)}$ is proved by constructing test functions in \cite{HH2022}, where $\mathrm{diam}(T):= \sup_{x, y \in V}d(x, y)$ is the diameter of $T$. This estimate can be easily proved by applying \eqref{CIFS}. See Example \ref{FiniteTree}.
 \end{itemize}
\end{remark}

The proofs of Theorem \ref{CheegerInequalityForNeumann} and Theorem \ref{CheegerInequalityForSteklov} follow a same strategy. Consider the graph $G_{\Omega}$. We use co-area formula, see Lemma \ref{CoareaFormula}, to prove that the first non-trivial eigenvalue $\lambda_1^{\widetilde{\mathcal{L}}}$ of the Laplacian on $G_{\Omega}$ satisfies
\[
 \frac18\widetilde{\alpha}_{\widetilde{\mathcal{L}}} \leq \lambda_1^{\widetilde{\mathcal{L}}} \leq 2\widetilde{\alpha}_{\widetilde{\mathcal{L}}},
\]
where
\[
 \widetilde{\alpha}_{\widetilde{\mathcal{L}}}:= \inf_{A, B \subseteq \overline{\Omega}}\frac{\mathrm{Cap}_{\Omega}(A, B)}{m(A) \wedge m(B)}.
\]
Then we construct a sequence of finite graphs $\{G^{(k)}\}_{k=1}^{\infty}$ where $G^{(1)} = G_{\Omega}$, $G^{(k)} = (\overline{\Omega}, E(\Omega, \overline{\Omega}), w, m^{(k)})$ with $m^{(k)}|_{\delta \Omega}= m$ and $m^{(k)}|_{\Omega} = \frac{m}{k}$. Following the proof in \cite{HM2020}, the first non-trivial eigenvalue $\lambda_1^{(k)}$ of the Laplacian on $G^{(k)}$ satisfies
\[
 \lim_{k \to \infty}\lambda_1^{(k)} = \sigma_1(\Omega).
\]
Moreover, one can easily show that the isocapacitary constant converges to $\alpha_S(\Omega)$. This proves Theorem \ref{CheegerInequalityForSteklov} by passing to the limit $k \to +\infty$.

We can estimate Steklov eigenvalues of an infinite subset of $G$. The DtN operator on an infinite subset was introduced in \cite{HM2020, HHW2022} with same restriction on vertex weights. We introduce a generalization of the DtN operator on an infinite subset of arbitrary vertex weights, see Section 5 for details. Let $U \subseteq V$ be an infinite subset with vertex boundary $\delta U$. We always assume that $\overline{U}$ is connected. Denote by $\sigma_1(U)$ the bottom of the spectrum of the DtN operator and set
\[
 \alpha_S(U):= \inf_{\substack{A \subseteq \delta U \\ |A| < + \infty}}\frac{\mathrm{Cap}^U(A)}{m(A)},
\]
where $\mathrm{Cap}^U(A)$ is defined in \eqref{DefOfCap3}. We have the following theorem.
\begin{theorem}\label{CheegerInequalityForDtNBottom}
 Let $G$ be an infinite weighted graph, $U \subseteq V$ be an infinite subset. Then
 \[
  \frac14\alpha_S(U) \leq \sigma_1(U) \leq \alpha_S(U).
 \]
\end{theorem}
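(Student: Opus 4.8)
The plan is to reduce the bottom of the spectrum of the infinite DtN operator to a Maz'ya--type isocapacitary estimate, in complete parallel with Theorem \ref{CheegerInequalityForDirichlet}. The conceptual point to exploit is that, in contrast to the first \emph{nontrivial} Steklov eigenvalue of a finite subgraph in Theorem \ref{CheegerInequalityForSteklov}, the bottom of the spectrum of $\Lambda_U$ carries no orthogonality constraint against constants. This is exactly why $\alpha_S(U)$ is an infimum over a single finite boundary set $A\subseteq\delta U$ rather than over a pair $A,B$ with a minimum, and why the expected constants are $\tfrac14$ and $1$ (as in the Dirichlet case) rather than $\tfrac18$ and $2$. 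The first step is to record, from the construction of $\Lambda_U$ in Section 5, the variational characterization
\[
 \sigma_1(U)=\inf_{0\neq f\in C_c(\delta U)}\frac{\langle \Lambda_U f,f\rangle}{\sum_{z\in\delta U}m(z)f(z)^2}
 =\inf_{u\in C_c(\overline U):\,u|_{\delta U}\neq 0}\frac{\mathcal E(u)}{\sum_{z\in\delta U}m(z)u(z)^2},
\]
where $C_c$ denotes finitely supported functions and $\mathcal E(u):=\tfrac12\sum_{\{x,y\}\in E(U,\overline U)}w(x,y)(u(x)-u(y))^2$ is the Dirichlet energy on $G_U$. The first equality is the spectral definition of the bottom of the spectrum together with the fact that $C_c(\delta U)$ is a core; the second uses the discrete Green identity $\langle\Lambda_U f,f\rangle=\mathcal E(u_f)$ for the (energy-minimizing) harmonic extension $u_f$ and the Dirichlet principle $\mathcal E(u_f)=\inf\{\mathcal E(u):u|_{\delta U}=f\}$, which lets one replace harmonic extensions by arbitrary finitely supported extensions.

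For the upper bound $\sigma_1(U)\leq\alpha_S(U)$, I would test the second Rayleigh quotient above with near-optimal capacitors. Fix a finite $A\subseteq\delta U$ and, using the definition \eqref{DefOfCap3} of $\mathrm{Cap}^U(A)$, choose $u_A\in C_c(\overline U)$ with $u_A\geq 1$ on $A$ and $\mathcal E(u_A)\leq \mathrm{Cap}^U(A)+\varepsilon$; truncating to $[0,1]$ only decreases the energy while preserving $u_A\equiv 1$ on $A$. Then $\sum_{z\in\delta U}m(z)u_A(z)^2\geq\sum_{z\in A}m(z)=m(A)$, so the quotient is at most $(\mathrm{Cap}^U(A)+\varepsilon)/m(A)$. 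Letting $\varepsilon\to 0$ and taking the infimum over finite $A\subseteq\delta U$ yields the claim. Note that, crucially, no orthogonality against constants is needed here, so a single-bump competitor suffices.

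For the lower bound $\tfrac14\alpha_S(U)\leq\sigma_1(U)$, I would run the Maz'ya co-area bookkeeping exactly as in the lower bound of Theorem \ref{CheegerInequalityForDirichlet}, using Lemma \ref{CoareaFormula}. Take $u\in C_c(\overline U)$, and may assume $u\geq 0$ after replacing $u$ by $|u|$, which does not increase $\mathcal E$. Decompose $u$ through its truncations $u_j:=(u\wedge t_j-t_{j-1})_+$ along levels $t_j$, so that $u=\sum_j u_j$; since each $u_j$ is comonotone with $u$, the increments share a sign and $\mathcal E(u)\geq\sum_j\mathcal E(u_j)$. Each normalized truncation $u_j/(t_j-t_{j-1})$ is a finitely supported competitor that is $\geq 1$ on the superlevel set $\{u\geq t_j\}$, hence in particular on $A_j:=\{u\geq t_j\}\cap\delta U$, giving $\mathcal E(u_j)\geq (t_j-t_{j-1})^2\,\mathrm{Cap}^U(A_j)\geq (t_j-t_{j-1})^2\,\alpha_S(U)\,m(A_j)$ by the definition of $\alpha_S(U)$. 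Summing against the layer-cake expansion $\sum_{z\in\delta U}m(z)u(z)^2=\int_0^\infty m(\{u^2>s\}\cap\delta U)\,ds$ and optimizing over the level spacing produces the universal factor $\tfrac14$, precisely as in Theorem \ref{CheegerInequalityForDirichlet}; since $C_c(\overline U)$ is dense in the form domain, the bound passes to $\sigma_1(U)$.

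The main obstacle I anticipate is not the two inequalities themselves but the functional-analytic groundwork that makes them legitimate in the infinite setting: establishing the variational characterization above for the DtN operator defined by exhaustion in Section 5 (a core argument together with the Green identity and Dirichlet principle on the infinite subgraph $G_U$), and ensuring compatibility between the capacity $\mathrm{Cap}^U(A)$ of \eqref{DefOfCap3}, which is itself defined as an exhaustion limit, and the finitely supported competitors used above. Concretely, one must approximate the energy-minimizing harmonic extension $u_f$ by finitely supported functions with energies converging to $\mathcal E(u_f)$, and select near-optimal capacitors for $\mathrm{Cap}^U(A)$ within $C_c(\overline U)$; both follow from the exhaustion definitions but require care to justify the interchange of infima and limits. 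Once these approximation statements are in place, the upper and lower bounds are routine and reproduce the Dirichlet-type constants $\tfrac14$ and $1$.
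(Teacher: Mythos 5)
Your route is genuinely different from the paper's. The paper never works with a Rayleigh quotient for $\Lambda$ directly: it proves Lemma \ref{CheegerInequalityForDS} for the truncated operators $\Lambda_W$ on finite $W\subseteq\overline U$ (via the vanishing-vertex-weight graphs $G^{(k)}$ and \cite[Proposition 3]{HM2020}, reducing to Theorem \ref{CheegerInequalityForDirichlet}), then passes to the limit along an exhaustion using Lemma \ref{LimitDSConstant} and the convergence $\sigma_1^D(W_i)\to\sigma_1(U)$ from \cite[Proposition 1.5]{HHW2022}. You bypass this double approximation with a direct variational/capacitary argument on $G_U$, which is cleaner and more self-contained \emph{if} the groundwork you flag is supplied. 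Your upper bound is fine as stated: it only needs $\langle\Lambda f,f\rangle\le\mathcal E_U(u,u)$ for an arbitrary finitely supported extension $u$ of $f$, which is the easy (lower-semicontinuity) half of the Dirichlet principle. For the lower bound, beware that you need the \emph{reverse} inequality $\langle\Lambda f,f\rangle=\mathcal E_U(u_f,u_f)=\lim_i\mathcal E_U(u^{W_i}_f,u^{W_i}_f)=\inf\{\mathcal E_U(u,u):u\in l_0(\overline U),\,u|_{\delta U}=f\}$; lsc alone points the wrong way, since the capacitary estimate is applied to the approximants and must transfer \emph{up} to $Q(f)$. This does hold (the $u^{W_i}_f$ are minimizers over nested affine constraint sets, so Pythagoras gives $\mathcal E_U(u^{W_i}_f-u^{W_j}_f,\cdot)=m_i-m_j$ and the sequence is energy-Cauchy), and you correctly identified it as the needed lemma, so I count it as flagged rather than missing.

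The one genuine gap is quantitative: your discrete truncation mechanism for the lower bound does not produce the constant $\tfrac14$. With levels $t_j$ of ratio $\theta>1$, superadditivity plus $\mathcal E_U(u_j,u_j)\ge(t_j-t_{j-1})^2\,\alpha_S(U)\,m(\{u\ge t_j\}\cap\delta U)$ and the layer-cake bound $\|u\|^2_{l^2(\delta U)}\le(\theta^2-1)\sum_j\theta^{2j}\,m(\{u\ge\theta^j\}\cap\delta U)$ yield at best the factor $\frac{\theta-1}{\theta^2(\theta+1)}$, maximized at the golden ratio with value about $0.09$; even levels adapted to the distribution function cannot reach $\tfrac14$ uniformly (test against $m(\{u\ge t\}\cap\delta U)\sim t^{-2}$ over a long range, which caps any fixed-spacing scheme near $0.2$). ``Optimizing over the level spacing'' therefore proves the theorem only with a worse constant. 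The fix is the other tool you already cite: do not discretize, but run Lemma \ref{CoareaFormula} verbatim, whose continuous family of equilibrium potentials $h_t$ and the Cauchy--Schwarz step are precisely what give the sharp factor $4$. The lemma does extend to $G_U$ for $0\le u\in l_0(\overline U)$: every competitor for $\mathrm{Cap}_U(\{u>t\},\{u\le 0\})$ vanishes on the cofinite set $\{u\le 0\}$, so the minimization is finite-dimensional, the optimizers $h_t$ exist, and Green's formula applies to finitely supported functions. Then $\mathrm{Cap}_U(\{u>t\},\{u\le 0\})\ge\mathrm{Cap}^U(\{u>t\}\cap\delta U)\ge\alpha_S(U)\,m(\{u>t\}\cap\delta U)$ (any competitor that is $1$ on $\{u>t\}$ and vanishes on $\{u\le0\}$ is in particular admissible for \eqref{DefOfCap3}), and integrating $t\,dt$ as in the proof of Theorem \ref{CheegerInequalityForDirichlet} gives $\mathcal E_U(u,u)\ge\tfrac14\alpha_S(U)\sum_{z\in\delta U}m(z)u(z)^2$, exactly the claimed constants. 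With that substitution, your proof is correct and arguably more economical than the paper's.
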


As an application, we show that for $N \geq 3$, the bottom of the spectrum of the DtN operator on $\mathbb{Z}^N_+:= \left\{(x_1, \cdots, x_N): x_1, \cdots, x_{N-1} \in \mathbb{Z}, x_N \in \mathbb{Z}_+\right\}$ is zero, see Example \ref{HalfOfZn}, which seems hard to prove using the upper bound $h_J$ in \cite{HHW2022}. 

\begin{remark}
 Compared with the estimate in \cite{HHW2022}, our estimate only use one geometric quantity. In particular, our upper bound estimate is sharp, while the upper bound estimate $h_J$ in \cite{HHW2022} is not. See Example \ref{InfiniteTree} and \cite[Example 4.7]{HHW2022}.
\end{remark}

Inspired by \cite{LGT2014, HM2020, HHW2022}, we can estimate higher-order Dirichlet eigenvalues and Steklov eigenvalues. For any subset $W \subseteq V$, we denote by $\mathcal{A}(W)$ the collection of all non-empty finite subsets of $W$ and $\mathcal{A}_k(W)$ the set of all disjoint $k$-tuples $(A_1, \cdots, A_k)$ such that $A_l \in \mathcal{A}(W)$, for all $l \in \{1, \cdots, k\}$.

For an infinite weighted graph $G = (V, E, w, m)$ and $k \geq 1$, we denote by $\lambda_k(G)$ the $k$-th variational eigenvalue of the Laplacian, see \eqref{KthEigenvalueOfLaplacian}. Set
\[
 \Gamma_k^D(G):= \inf_{(A_1, \cdots, A_k) \in \mathcal{A}_k(V)}\max_{l \in \{1, \cdots, k\}}\alpha_D(A_l).
\]
Then we get the following theorem.
\begin{theorem}\label{HigherOrderDirichletEstimate}
 For an infinite weighted graph $G$, there exists a universal constant $c>0$ such that for any $k \geq 1$,
 \[
  \frac{c}{k^6}\Gamma_k^D(G) \leq \lambda_k(G) \leq 2\Gamma_k^D(G).
 \]
\end{theorem}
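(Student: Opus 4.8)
The plan is to prove the two inequalities by entirely different mechanisms, using Theorem \ref{CheegerInequalityForDirichlet} as a dictionary between the isocapacitary constant $\alpha_D$ and the first Dirichlet eigenvalue $\lambda_1^D$ on each piece. Recall that Theorem \ref{CheegerInequalityForDirichlet} gives $\lambda_1^D(A)\le\alpha_D(A)\le 4\lambda_1^D(A)$ for every finite $A$, so that $\Gamma_k^D(G)$ is comparable, up to the factor $4$, to the ``$k$-way localized eigenvalue'' $\inf_{(A_1,\dots,A_k)\in\mathcal{A}_k(V)}\max_l\lambda_1^D(A_l)$. It therefore suffices to compare this quantity with $\lambda_k(G)$.

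For the upper bound $\lambda_k(G)\le 2\Gamma_k^D(G)$, fix any tuple $(A_1,\dots,A_k)\in\mathcal{A}_k(V)$ and let $u_l$ be the first Dirichlet eigenfunction on $G_{A_l}$, extended by zero to $V$; thus $u_l$ is supported on $A_l$, vanishes on $\delta A_l$, and the $u_l$ have pairwise disjoint supports. Since the Dirichlet energy is local, the energy of $u_l$ in $G$ equals its energy in $G_{A_l}$, so $\langle\mathcal{L}u_l,u_l\rangle=\lambda_1^D(A_l)\|u_l\|^2$. For $f=\sum_l c_l u_l$ I would estimate the energy edge by edge: an edge joining two different pieces $A_l,A_{l'}$ contributes $(c_lu_l(x)-c_{l'}u_{l'}(y))^2\le 2c_l^2u_l(x)^2+2c_{l'}^2u_{l'}(y)^2$, while edges inside a single piece or leaving a piece to its boundary contribute exactly $c_l^2(u_l(x)-u_l(y))^2$ because the other functions vanish there. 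Since each of the terms $c_l^2u_l(x)^2$, $c_{l'}^2u_{l'}(y)^2$ appearing on the right is counted once in $\sum_l c_l^2\langle\mathcal{L}u_l,u_l\rangle$, summing over all edges yields $\langle\mathcal{L}f,f\rangle\le 2\sum_l c_l^2\langle\mathcal{L}u_l,u_l\rangle$, whereas disjointness gives $\|f\|^2=\sum_l c_l^2\|u_l\|^2$. Hence the Rayleigh quotient on $\mathrm{span}(u_1,\dots,u_k)$ is at most $2\max_l\lambda_1^D(A_l)\le 2\max_l\alpha_D(A_l)$, and the min-max characterization of $\lambda_k(G)$ together with the infimum over tuples gives $\lambda_k(G)\le 2\Gamma_k^D(G)$.

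The lower bound is the substantial direction, and I would follow the multi-way spectral partitioning of Lee--Oveis Gharan--Trevisan \cite{LGT2014}. First I would reduce to finite graphs through an exhaustion $\{W_i\}$, using $\lambda_k(G)=\lim_i\lambda_k^D(W_i)$ (the higher-order analogue of the $k=1$ fact quoted in the introduction) and the observation that any disjoint tuple produced inside some $W_i$ is admissible for the infimum defining $\Gamma_k^D(G)$, the quantities $\lambda_1^D$ and $\alpha_D$ of a finite set being intrinsic to the subgraph it spans. On the finite graph $G_{W_i}$, I would choose a $k$-dimensional space with Rayleigh quotient at most $\lambda_k^D(W_i)$ and an orthonormal basis $f_1,\dots,f_k$, and form the spectral embedding $F=(f_1,\dots,f_k)\colon\overline{W_i}\to\mathbb{R}^k$, whose total energy is at most $k\lambda_k^D(W_i)$. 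The heart of the argument is the localization theorem of \cite{LGT2014}: from such an embedding one extracts $k$ disjointly supported functions $\psi_1,\dots,\psi_k$ whose Rayleigh quotients are bounded \emph{linearly} in the spectral energy, at the cost of a $\mathrm{poly}(k)$ factor that tracks out to $k^6$. Setting $A_l:=\mathrm{supp}\,\psi_l$, each $\psi_l$ is an admissible Dirichlet test function on $A_l$, so $\lambda_1^D(A_l)\le Ck^6\lambda_k^D(W_i)$, whence $\alpha_D(A_l)\le 4Ck^6\lambda_k^D(W_i)$ by Theorem \ref{CheegerInequalityForDirichlet}. Taking the maximum over $l$ and letting $i\to\infty$ gives $\Gamma_k^D(G)\le 4Ck^6\lambda_k(G)$, i.e. the claim with $c=1/(4C)$.

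The main obstacle is the localization step. Unlike the usual higher-order Cheeger inequality, where extracting disjointly supported indicator-type functions and passing to conductance costs a square root and yields only a bound in $\sqrt{\lambda_k}$, here I must keep the bound linear in $\lambda_k$, which is precisely what the capacitary formulation and Theorem \ref{CheegerInequalityForDirichlet} are designed to exploit. Carrying this out requires the full random-projection and geometric-partitioning machinery of \cite{LGT2014}, adapted to the weighted Laplacian and to a truncation that makes the supports disjoint while losing only a polynomial factor in $k$; verifying that the resulting loss is $k^6$ and that all constants are universal, that is, independent of the weights $w$ and $m$ as emphasized in the remark after Theorem \ref{CheegerInequalityForDirichlet}, is where the real work lies.
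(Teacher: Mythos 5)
Your proposal is correct and follows essentially the same route as the paper: a factor-$2$ upper bound from disjointly supported first Dirichlet eigenfunctions, Theorem \ref{CheegerInequalityForDirichlet} as the dictionary between $\lambda_1^D(A_l)$ and $\alpha_D(A_l)$, an exhaustion $\{W_i\}$ with $\lambda_k(G)=\lim_i\lambda_k^D(W_i)$, and the $k^6$ spectral localization for the lower bound. The one place you anticipate ``real work''---re-running the machinery of \cite{LGT2014} to extract disjointly supported functions with Rayleigh quotients linear in $\lambda_k^D(W_i)$ up to a $k^6$ loss---is exactly what the paper imports as a black box, citing \cite[Theorem 5.11]{HHW2022} (itself the adaptation of \cite{LGT2014} in the spirit of \cite{HM2020}) to obtain $\lambda_k^D(W)\geq \frac{c}{k^6}\widetilde{\Gamma}_k(W)$ in Lemma \ref{HighOrderDirichlet}, so that step needs only a citation rather than a re-derivation.
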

We refer to \cite[Section 5]{HHW2022} for higher-order estimates for Dirichlet eigenvalues on a finite graph, see also Lemma \ref{HighOrderDirichlet}.
 
For any subset $X \subseteq V$ and $Y \in \mathcal{A}(\overline{X})$, we define
 \[
  \alpha_{DS}^X(Y) := \inf_{A \subseteq Y \cap \delta X} \frac{\mathrm{Cap}_{X}(A, \delta_XY)}{m(A)},
 \]
where $\delta_XY$ is the vertex boundary of $Y$ in $G_X$. By convention, when $Y \cap \delta X = \varnothing$, we set $\alpha_{DS}^X(Y) = +\infty$.

For any finite subset $\Omega \subseteq V$ and any $1 \leq k \leq |\delta \Omega|-1$, we denote by $\sigma_k(\Omega)$ the $k$-th non-trivial Steklov eigenvalue. Set
\[
 \kappa_{k+1}:= \min_{(A_1, \cdots, A_{k+1})\in \mathcal{A}_{k+1}(\overline{\Omega})}\max_{l \in \{1, \cdots, k+1\}}\alpha_{DS}^{\Omega}(A_l),
\]
Then we have the following theorem.
\begin{theorem}\label{HighOrderSteklovOnFinite}
 Let $G$ be a weighted graph, $\Omega \subseteq V$ be a finite subset. Then there exists a universal constant $c>0$ such that for any $1 \leq k \leq |\delta \Omega|-1$,
 \[
  \frac{c}{k^6}\kappa_{k+1} \leq \sigma_k(\Omega) \leq 2\kappa_{k+1}.
 \]
\end{theorem}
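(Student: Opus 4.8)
The plan is to prove the two inequalities separately: the upper bound by an explicit test-function construction, and the lower bound by reducing the Steklov problem on $\Omega$ to the Laplacian eigenvalue problem on the auxiliary finite graphs $G^{(n)} = (\overline{\Omega}, E(\Omega,\overline{\Omega}), w, m^{(n)})$ with $m^{(n)}|_{\delta\Omega}=m$ and $m^{(n)}|_{\Omega}=m/n$ (as in the strategy preceding Theorem \ref{CheegerInequalityForSteklov}), to which a finite-graph higher-order capacitary estimate applies.

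For the upper bound $\sigma_k(\Omega)\le 2\kappa_{k+1}$, I would use the variational characterization $\sigma_k(\Omega)=\min_{\dim S=k+1}\max_{0\neq f\in S}\,\mathcal{E}(u_f)\big/\sum_{z\in\delta\Omega}m(z)f(z)^2$, where $u_f$ is the harmonic extension, $\mathcal{E}$ is the Dirichlet energy on $G_\Omega$, and $\langle\Lambda_\Omega f,f\rangle=\mathcal{E}(u_f)$. Take a tuple $(A_1,\dots,A_{k+1})$ realizing $\kappa_{k+1}$; for each $l$ pick $A_l^*\subseteq A_l\cap\delta\Omega$ achieving $\alpha_{DS}^{\Omega}(A_l)$ and let $\psi_l$ be the capacitary potential for $\mathrm{Cap}_\Omega(A_l^*,\delta_\Omega A_l)$, so that $\psi_l=1$ on $A_l^*$, $\psi_l=0$ on $\delta_\Omega A_l$ and off $A_l$, and $\mathcal{E}(\psi_l)=\mathrm{Cap}_\Omega(A_l^*,\delta_\Omega A_l)$. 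Since the $A_l$ are disjoint and each $\psi_l$ vanishes outside $A_l$, the functions $f_l:=\psi_l|_{\delta\Omega}$ have pairwise disjoint supports and span a $(k+1)$-dimensional space $S$. For $\Phi=\sum_l c_l\psi_l$ the only interaction terms in $\mathcal{E}(\Phi)$ come from edges joining distinct $A_l,A_{l'}$, where $\psi_l$ vanishes at the $A_{l'}$-endpoint and $\psi_{l'}$ at the $A_l$-endpoint; the elementary bound $(a-b)^2\le 2a^2+2b^2$ then yields $\mathcal{E}(\Phi)\le 2\sum_l c_l^2\,\mathcal{E}(\psi_l)$. Combined with $\mathcal{E}(u_f)\le\mathcal{E}(\Phi)$ and $\sum_{z\in\delta\Omega}m(z)f(z)^2\ge\sum_l c_l^2\,m(A_l^*)$, and with $\mathcal{E}(\psi_l)=\alpha_{DS}^{\Omega}(A_l)\,m(A_l^*)\le\kappa_{k+1}\,m(A_l^*)$, this gives $\mathcal{E}(u_f)/\sum m f^2\le 2\kappa_{k+1}$ for every $f\in S$, hence $\sigma_k(\Omega)\le 2\kappa_{k+1}$.

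For the lower bound I would first record that, as $n\to\infty$, the eigenvalue equation on $G^{(n)}$ degenerates to $\mathcal{L}f=0$ on $\Omega$ and $\partial f/\partial n=\lambda f$ on $\delta\Omega$, so the $j$-th variational eigenvalue $\lambda_j^{(n)}$ of the Laplacian on $G^{(n)}$ converges to $\sigma_j(\Omega)$ for each $0\le j\le|\delta\Omega|-1$ (the higher-order analogue of the convergence used for Theorem \ref{CheegerInequalityForSteklov}). Next I would apply the finite-graph higher-order capacitary Cheeger estimate for the nontrivial eigenvalues of the full Laplacian (the counterpart of Theorem \ref{HigherOrderDirichletEstimate} and Lemma \ref{HighOrderDirichlet}): there is a universal $c>0$ with $\frac{c}{k^6}\Gamma^{(n)}_{k+1}\le\lambda_k^{(n)}$, where $\Gamma^{(n)}_{k+1}=\min_{(A_1,\dots,A_{k+1})}\max_l\alpha_D^{(n)}(A_l)$ and $\alpha_D^{(n)}(A)=\inf_{B\subseteq A}\mathrm{Cap}_\Omega(B,\delta_\Omega A)/m^{(n)}(B)$. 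Finally I would let $n\to\infty$: since $m^{(n)}(B)=m(B\cap\delta\Omega)+\tfrac1n m(B\cap\Omega)$, each ratio is monotone in $n$ and any minimizing $B$ may be taken inside $\delta\Omega$, so $\alpha_D^{(n)}(A)\to\alpha_{DS}^{\Omega}(A)$; as $\overline{\Omega}$ is finite the min over tuples passes to the limit and $\Gamma^{(n)}_{k+1}\to\kappa_{k+1}$. Because $c$ is independent of $n$, passing to the limit gives $\frac{c}{k^6}\kappa_{k+1}\le\sigma_k(\Omega)$.

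The main obstacle is the finite-graph lower bound $\frac{c}{k^6}\Gamma^{(n)}_{k+1}\le\lambda_k^{(n)}$: this is where the Lee--Gharan--Trevisan random-partition and localization machinery \cite{LGT2014} enters and is responsible for the polynomial factor $k^{-6}$, combining a spectral-embedding argument with the single-region Maz'ya estimate of Theorem \ref{CheegerInequalityForDirichlet}. A secondary point needing care is the interchange of the limit $n\to\infty$ with the infima and the min/max defining the capacitary constants, together with the uniformity of the universal constant across the family $\{G^{(n)}\}$; both are controlled by the finiteness of $\overline{\Omega}$ and the monotonicity of $m^{(n)}$ in $n$.
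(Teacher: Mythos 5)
Your proposal is correct, and it assembles the same underlying ingredients as the paper in a noticeably different way. The paper's proof is essentially a two-step citation: it quotes \cite[Theorem 5]{HM2020}, recorded as \eqref{HMHighOrderEstimate}, which already sandwiches $\sigma_k(\Omega)$ between $\frac{c}{k^6}\widetilde{\kappa}_{k+1}$ and $2\widetilde{\kappa}_{k+1}$, where $\widetilde{\kappa}_{k+1}$ is the min-max over disjoint $(k+1)$-tuples of the first Dirichlet--Steklov eigenvalues $\sigma_1^D(A_l)$, and then converts each $\sigma_1^D(A_l)$ into $\alpha_{DS}^{\Omega}(A_l)$ up to a factor $4$ via Lemma \ref{CheegerInequalityForDS}. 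You bypass this entirely on the upper-bound side: your disjointly supported capacitary potentials $\psi_l$, the cross-term bound $(a-b)^2\le 2a^2+2b^2$, and the energy-minimizing property of the harmonic extension give $\sigma_k(\Omega)\le 2\kappa_{k+1}$ directly with the same constant, and your supporting checks (each $A_l$ in a minimizing tuple meets $\delta\Omega$ whenever $\kappa_{k+1}<\infty$, $\psi_l$ vanishes on $\delta_\Omega A_l$ and off $A_l$, $\|f\|^2_{l^2(\delta\Omega)}\ge\sum_l c_l^2\,m(A_l^*)$) all go through; this is more self-contained than the paper's route. On the lower-bound side you in effect re-derive \cite[Theorem 5]{HM2020} rather than cite it: the finite-graph black box you isolate, $\lambda_k^{(n)}\ge\frac{c}{k^6}\min\max_l\lambda_1^D(A_l)$ for the full Laplacian over disjoint $(k+1)$-tuples, is exactly Miclo's higher-order estimate as used in \cite{HM2020} --- note it is \emph{not} Lemma \ref{HighOrderDirichlet} of this paper, which concerns Dirichlet eigenvalues and $k$-tuples, so you should cite it in that precise form; once it is in hand, Theorem \ref{CheegerInequalityForDirichlet} converts each $\lambda_1^D(A_l)$ on $G^{(n)}$ into $\frac14\alpha_D^{(n)}(A_l)$ region by region, with no need to rework the Lee--Gharan--Trevisan machinery at the capacity level. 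Your remaining steps --- the spectral convergence $\lambda_j^{(n)}\to\sigma_j(\Omega)$ for all $j$ (this is \cite[Proposition 3]{HM2020}, which the paper itself invokes for higher eigenvalues in Section 6), the monotone limit $\alpha_D^{(n)}(A)\uparrow\alpha_{DS}^{\Omega}(A)$ using $m^{(n)}(B)=m(B\cap\delta\Omega)+\frac1n m(B\cap\Omega)$, and passing the min over the finitely many tuples through the limit --- are exactly the arguments the paper deploys one region at a time in Lemma \ref{CheegerInequalityForDS} and Theorem \ref{CheegerInequalityForNeumann}, here performed globally. Net comparison: the paper's proof is shorter because it cites the Steklov-level result wholesale; yours buys a self-contained, sharp-constant upper bound and makes transparent that the only nonelementary input is Miclo's $k^{-6}$ bound together with the weight-degeneration convergence, both of which neither proof re-proves.
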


For an infinite subset $U \subseteq V$ and any $k \geq 1$, we denote by $\sigma_k(U)$ the $k$-th variational Steklov eigenvalue, see \eqref{DefinitionOfSigmaK} for the definition. Set
\[
 \Gamma^S_k(U):= \inf_{(A_1, \cdots, A_k) \in \mathcal{A}_k(\overline{U})}\max_{l \in \{1, \cdots, k\}}\alpha_{DS}^U(A_l),
\]
then we get the following estimate.
\begin{theorem}\label{HighOrderSteklovOnInfinite}
 Let $G$ be a weighted graph, $U \subseteq V$ be an infinite subset. Then there exists a universal constant $c>0$ such that for any $k \geq 1$,
 \[
  \frac{c}{k^6}\Gamma^S_k(U) \leq \sigma_k(U) \leq 2\Gamma^S_k(U).
 \]
\end{theorem}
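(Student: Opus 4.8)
My plan is to establish the two inequalities by different routes: the upper bound by a direct construction of disjointly supported trial functions from capacitary potentials, and the lower bound by an exhaustion argument that reduces to a finite higher-order estimate of the same type as Theorem \ref{HighOrderSteklovOnFinite}, using the spectral-partition technique of \cite{LGT2014} adapted to the infinite Dirichlet-to-Neumann setting as in \cite{HHW2022}. Throughout I use the variational (min-max) characterization of $\sigma_k(U)$ from \eqref{DefinitionOfSigmaK}, together with the fact that the quadratic form of the DtN operator $\Lambda_U$ evaluated at a boundary function $f$ equals the Dirichlet energy of its minimal harmonic extension $u_f$ into $U$, and the co-area formula of Lemma \ref{CoareaFormula}.

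For the upper bound I would fix a $k$-tuple $(A_1, \dots, A_k) \in \mathcal{A}_k(\overline U)$ that nearly attains $\Gamma^S_k(U)$, and for each $l$ choose $A_l' \subseteq A_l \cap \delta U$ nearly attaining $\alpha_{DS}^U(A_l)$ together with the capacitary potential $g_l$ realizing $\mathrm{Cap}^U(A_l', \delta_U A_l)$, normalized so that $g_l = 1$ on $A_l'$ and $g_l = 0$ on $\delta_U A_l$. Since the $A_l$ are pairwise disjoint, the potentials $g_l$ have disjoint supports, so their boundary traces $f_l := g_l|_{\delta U}$ span a $k$-dimensional, $\ell^2(\delta U, m)$-orthogonal subspace $S$. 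For $f = \sum_l c_l f_l$ the energy-minimality of the harmonic extension gives $\langle \Lambda_U f, f\rangle \le \sum_l c_l^2\, \mathrm{Cap}^U(A_l', \delta_U A_l)$, while $\|f\|^2 \ge \sum_l c_l^2\, m(A_l')$, so the Rayleigh quotient on $S$ is at most $\max_l \alpha_{DS}^U(A_l)$ up to the chosen error. The min-max principle then yields $\sigma_k(U) \le 2\Gamma^S_k(U)$ after taking the infimum over tuples, the factor $2$ being the same slack as in \eqref{CIFS} and Theorem \ref{HighOrderSteklovOnFinite}.

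For the lower bound I would exhaust the interior of $U$ by finite sets $\Omega_1 \subseteq \Omega_2 \subseteq \cdots$ with $\bigcup_i \Omega_i = U$, and consider on each $\overline{\Omega_i}$ the mixed Dirichlet-Steklov truncation that keeps $\delta U \cap \overline{\Omega_i}$ as Steklov boundary and imposes the Dirichlet condition on the inner boundary $\delta\Omega_i \setminus \delta U$; by the definition of $\Lambda_U$ through exhaustion (Section 5), the associated $k$-th variational eigenvalues converge monotonically to $\sigma_k(U)$, and $\mathrm{Cap}_{\Omega_i}(A, \cdot) \uparrow \mathrm{Cap}^U(A, \cdot)$ for each fixed finite $A$. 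On each finite truncation a higher-order estimate of the same type as Theorem \ref{HighOrderSteklovOnFinite} provides a constant $c>0$, independent of $i$ and $k$, with $\frac{c}{k^6}\,\Gamma^S_k(\Omega_i) \le \sigma_k(\Omega_i)$; here $\Gamma^S_k$ is formed from $k$-tuples rather than $(k+1)$-tuples because the mixed problem has no trivial constant mode, which is precisely the origin of the index shift between Theorem \ref{HighOrderSteklovOnFinite} and the present statement. Passing to the limit $i \to \infty$ and using $\Gamma^S_k(\Omega_i) \to \Gamma^S_k(U)$ gives $\frac{c}{k^6}\Gamma^S_k(U) \le \sigma_k(U)$.

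The main obstacle is the lower bound, where two points require care. First, on the finite truncations one must run the Lee-Gharan-Trevisan spectral-partition argument \cite{LGT2014}: starting from $k$ orthonormal eigenfunctions of the mixed DtN operator and their harmonic extensions, one localizes them into $k$ disjointly supported functions whose Rayleigh quotients are controlled by $\sigma_k(\Omega_i)$ up to a $\mathrm{poly}(k)$ loss, and then extracts, via Lemma \ref{CoareaFormula}, superlevel sets $A_l \subseteq \delta\Omega_i$ whose mixed isocapacitary ratios $\mathrm{Cap}_{\Omega_i}(A_l, \delta_{\Omega_i}A_l)/m(A_l)$ are bounded by the same quantity; tracking the loss through localization and co-area yields the $k^{-6}$ factor. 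Second, one must justify the interchange of limits, i.e. the joint convergence $\sigma_k(\Omega_i) \to \sigma_k(U)$ and $\Gamma^S_k(\Omega_i) \to \Gamma^S_k(U)$, which rests on the monotone convergence of capacities under exhaustion and on the spectral convergence of the mixed DtN operators to $\Lambda_U$ established in Section 5. Once these two ingredients are in place, the inequality follows by passing the uniform finite bound to the limit.
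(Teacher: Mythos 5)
Your proposal is correct in substance, and its lower-bound skeleton coincides with the paper's: exhaust $\overline U$ by finite sets $W_i$, replace $\Lambda$ by the mixed Dirichlet--Steklov truncations $\Lambda_{W_i}$ (Steklov condition on $W_i \cap \delta U$, Dirichlet condition on $\delta_U W_i$), prove a uniform two-sided finite estimate $\frac{c}{k^6}\Gamma^S_k(W_i) \le \sigma^D_k(W_i) \le 2\Gamma^S_k(W_i)$ with $k$-tuples (you correctly identify the index shift: the mixed problem has no constant mode, unlike Theorem \ref{HighOrderSteklovOnFinite}), and pass to the limit using $\sigma_k(U)=\lim_{i\to\infty}\sigma^D_k(W_i)$ together with the monotone convergence $\mathrm{Cap}_U(A,\delta_U W_i)\downarrow \mathrm{Cap}^U(A)$, which gives $\Gamma^S_k(W_i)\to\Gamma^S_k(U)$ exactly as in the paper. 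Where you genuinely diverge is in how the finite estimate is obtained: you propose to rerun the Lee--Gharan--Trevisan localization directly on eigenfunctions of the mixed DtN operator, whereas the paper never touches the partition argument at the DtN level. Instead it degenerates the vertex weights on $W_i\cap U$ (setting $m^{(l)}=m/l$ there), invokes \cite[Proposition 3]{HM2020} to get $\lambda^{(l)}_{k,D}(W_i)\to\sigma^D_k(W_i)$, and then applies the already-available higher-order \emph{Dirichlet} estimate (Lemma \ref{HighOrderDirichlet}, i.e.\ \cite[Theorem 5.11]{HHW2022}) combined with Lemma \ref{CheegerInequalityForDS}; this reduction works precisely because the constants in Theorem \ref{CheegerInequalityForDirichlet} and Lemma \ref{HighOrderDirichlet} are independent of the vertex weights, the point the paper stresses in its introduction. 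Your route would require re-establishing the $\mathrm{poly}(k)$-loss spectral-partition theorem in the mixed setting (essentially redoing \cite[Theorem 5]{HM2020}), which is feasible but much heavier; the paper's weight-degeneration buys the finite two-sided bound for free and yields the upper bound simultaneously through the same limit, making your separate test-function construction at the infinite level unnecessary, though it is valid and arguably more transparent.

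One imprecision in your upper bound should be repaired. After observing that the capacitary potential satisfies $\{g_l \neq 0\} \subseteq A_l$ (since $g_l$ vanishes on $\delta_U A_l$ and may be taken zero outside $A_l \cup \delta_U A_l$), the energy of $g=\sum_l c_l g_l$ is \emph{not} bounded by $\sum_l c_l^2\,\mathcal{E}_U(g_l,g_l)$ as you assert: edges joining $A_l$ to $A_j$ contribute cross terms, and one only obtains
\[
 \mathcal{E}_U(g,g) \le 2\sum_l c_l^2\,\mathcal{E}_U(g_l,g_l)
\]
via $(a-b)^2 \le 2a^2+2b^2$ on such edges. This is exactly where the factor $2$ in the statement must enter (the same mechanism producing the factor $2$ in Lemma \ref{HighOrderDirichlet}); your text first claims the additive inequality and then attributes the $2$ vaguely to ``the same slack as in \eqref{CIFS}'', so the bookkeeping there needs to be made explicit. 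With that correction, and with the finite-level mixed estimate either proved via your LGT adaptation or replaced by the paper's weight-degeneration reduction, the proposal is sound.
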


The paper is organized as follows. In Section 2, we recall some basic facts on graphs. In Section 3, we study the first Dirichlet eigenvalue on a finite graph and prove Theorem \ref{CheegerInequalityForDirichlet} and Theorem \ref{CheegerInequalityForBottom}. In Section 4, we obtain the estimate of the first non-trivial Neumann eigenvalue on a finite graph, i.e., Theorem \ref{CheegerInequalityForNeumann}. In Section 5, we study the first non-trivial Steklov eigenvalue on a finite graph and prove Theorem \ref{CheegerInequalityForSteklov} and Theorem \ref{CheegerInequalityForDtNBottom}. In Section 6, we prove higher-order eigenvalue estimates Theorem \ref{HigherOrderDirichletEstimate}, Theorem \ref{HighOrderSteklovOnFinite} and Theorem \ref{HighOrderSteklovOnInfinite}. In Section 7, we discuss the eigenvalue problem of the DtN operator defined in \cite{HM2020}.

\section{Preliminaries}
Let $G = (V, E, w, m)$ be a weighted graph, and $F \subseteq V$ be a finite subset. Given a function $f \in \mathbb{R}^F$, we denote by
\[
 \|f\|_{l^2(F)}:= \left(\sum_{x \in F}|f(x)|^2m(x)\right)^{\frac12}
\]
the $l^2$ norm of $f$, and 
\[
 \|f\|_{l^{\infty}(F)}:= \sup_{x \in F}|f(x)|
\]
the $l^{\infty}$ norm of $f$. Let 
\[
 l^2(F):= \{f \in \mathbb{R}^{F}: \|f\|_{l^2(F)} < +\infty\}
\]
be the space of $l^2$ summable functions on $F$. Then $l^2(F)$ is a Hilbert space equipped with standard inner product
\[
 \langle f, g \rangle_F = \sum_{x \in F}f(x)g(x)m(x), \quad \forall \ f, g \in \mathbb{R}^{F}.
\]

For any subset $X \subseteq V$, we denote by $l_0(X)$ the set of functions on $X$ with finite support. Consider the graph $G_X$. Recall that edges between vertices in $\delta X$ are removed. Given functions $f, g \in \mathbb{R}^{\overline{X}}$, we define the energy functional as
\begin{align*}
 \mathcal{E}_{X}(f, g)&:= \frac12\sum_{x, y \in \overline{X}}w(x, y)(f(y)-f(x))(g(y)-g(x)) \\
 &=\sum_{\{x, y\} \in E(X, \overline{X})}w(x, y)(f(y)-f(x))(g(y)-g(x)),
\end{align*}
 whenever the summation absolutely converges. For any finite $A, B \subseteq \overline{X}$, we define
\begin{equation}\label{DefOfCap}
 \mathrm{Cap}_{X}(A, B):= \inf\{\mathcal{E}_{X}(f, f): f|_A = 1, f|_B = 0, f \in l_0(\overline{X})\}.
\end{equation}
For finite $X$, one easily checks that the optimizer $f$ exists, which satisfies $\Delta f = 0$ on $X \setminus (\mathring{\delta}A \cup \mathring{\delta}B)$ and $\frac{\partial f}{\partial n} = 0$ on $\delta X \setminus (\mathring{\delta}A \cup \mathring{\delta}B)$, where 
\[
  \mathring{\delta} A = \{x \in A: \text{there exists } y \in \overline{X} \setminus A \text{ such that } y \sim x \text{ in } G_X\}.
\]
When $X = V$, we write
\[
 \mathcal{E}(f, g) = \mathcal{E}_V(f, g)
\]
and
\[
 \mathrm{Cap}(A, B) = \mathrm{Cap}_V(A, B).
\]
For finite $\Omega \subseteq V$, we set
\begin{equation}\label{DefOfCap2}
 \mathrm{Cap}_{\Omega}(A) = \mathrm{Cap}_{\Omega}(A, \delta \Omega).
\end{equation}

For an infinite graph, the exhaustion of the whole graph by finite subsets of vertices is an important concept, see e.g., \cite{BHJ2014}. 
\begin{definition}\label{DefinitionOfExhaustion}
Let $G = (V, E, w, m)$ be an infinite weighted graph. A sequence of subsets of vertices $\mathcal{W} = \{W_i\}_{i = 1}^{\infty}$ is called an exhaustion of $G$, denoted by $\mathcal{W} \uparrow V$, if it satisfies 
\begin{itemize}
 \item $W_1 \subseteq W_2 \subseteq \cdots \subseteq W_i \subseteq \cdots \subseteq V$,
 \item $|W_i| < \infty$, for all $i = 1, 2, \cdots$,
 \item $V = \bigcup_{i=1}^{\infty}W_i$.
\end{itemize}
\end{definition}
Let $U \subseteq V$ be an infinite subset with vertex boundary $\delta U$. For any finite $A \subseteq \overline{U}$, we define
\begin{equation}\label{DefOfCap3}
 \mathrm{Cap}^U(A):= \inf\{\mathcal{E}_U(f, f): f|_A = 1, f \in l_0(\overline{U})\}.
\end{equation}
For any exhaustion $\mathcal{W} = \{W_i\}_{i=1}^{\infty} \uparrow \overline{U}$, one can show that 
\[
 \mathrm{Cap}^U(A) = \lim_{i \to \infty}\mathrm{Cap}_{U}(A, \delta_UW_i).
\]
When $U = V$, we write
\[
 \mathrm{Cap}(A) = \mathrm{Cap}^V(A),
\]
and we have for any exhaustion $\mathcal{W} = \{W_i\}_{i = 1}^{\infty}$ of $G$,
\[
 \mathrm{Cap}(A) = \lim_{i \to \infty}\mathrm{Cap}_{W_i}(A).
\]

We recall the following well-known result of Green's formula.
\begin{lemma}
 For any finite subset $\Omega \subseteq V$ and any $f, g \in \mathbb{R}^{\overline{\Omega}}$, we have
 \begin{equation}\label{GreenFormula}
  \langle \mathcal{L}f, g \rangle_{\Omega} + \langle \frac{\partial f}{\partial n}, g \rangle_{\delta \Omega} = \mathcal{E}_{\Omega}(f, g).
 \end{equation}
\end{lemma}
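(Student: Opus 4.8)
The plan is to prove the identity by a direct computation, expanding both inner products on the left-hand side into sums over edges and matching them term by term with the edge-sum expression for $\mathcal{E}_{\Omega}(f,g)$. Everything takes place on the graph $G_{\Omega}$, so I keep in mind throughout that $w(x,y) = 0$ whenever $\{x,y\} \notin E(\Omega, \overline{\Omega})$; in particular edges joining two vertices of $\delta\Omega$ never contribute.

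First I would expand the interior term. Using $\mathcal{L} = -\Delta$ and the definition of the inner product on $\Omega$,
\[
 \langle \mathcal{L}f, g\rangle_{\Omega} = \sum_{x \in \Omega}\bigl(-\Delta f(x)\bigr) g(x) m(x) = \sum_{x \in \Omega}\sum_{y \sim x} w(x,y)\bigl(f(x) - f(y)\bigr) g(x),
\]
where the weight $m(x)$ cancels the factor $1/m(x)$ in $\Delta$ and the sum over $y \sim x$ runs over neighbours of $x$ in $G_{\Omega}$, i.e. over $y \in \overline{\Omega}$ with $\{x,y\} \in E(\Omega, \overline{\Omega})$. This is a sum over ordered pairs $(x,y)$ with $x \in \Omega$ and $\{x,y\} \in E(\Omega, \overline{\Omega})$. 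Similarly the boundary term expands, after cancelling $m(z)$, to
\[
 \langle \tfrac{\partial f}{\partial n}, g\rangle_{\delta\Omega} = \sum_{z \in \delta\Omega}\sum_{x \in \Omega: x \sim z} w(x,z)\bigl(f(z) - f(x)\bigr) g(z),
\]
a sum over ordered pairs $(x,z)$ with $z \in \delta\Omega$, $x \in \Omega$ and $\{x,z\} \in E(\Omega, \overline{\Omega})$.

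Next I would regroup the combined left-hand side as a sum over unordered edges of $E(\Omega, \overline{\Omega})$ and compare with $\mathcal{E}_{\Omega}(f,g) = \sum_{\{x,y\} \in E(\Omega, \overline{\Omega})} w(x,y)(f(y)-f(x))(g(y)-g(x))$. The bookkeeping splits into two cases according to where the endpoints of an edge lie. For an edge $\{a,b\}$ with both $a, b \in \Omega$, only the interior term contributes, and it does so through both ordered pairs $(a,b)$ and $(b,a)$; their sum is $w(a,b)(f(a)-f(b))g(a) + w(a,b)(f(b)-f(a))g(b) = w(a,b)(f(b)-f(a))(g(b)-g(a))$, exactly the energy term of this edge. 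For an edge $\{a,b\}$ with $a \in \Omega$ and $b \in \delta\Omega$, the interior term contributes the single ordered pair $(a,b)$ and the boundary term contributes the pair with $z = b$; adding $w(a,b)(f(a)-f(b))g(a)$ and $w(a,b)(f(b)-f(a))g(b)$ again yields $w(a,b)(f(b)-f(a))(g(b)-g(a))$. Using the symmetry $w(a,b) = w(b,a)$ throughout, every edge of $E(\Omega, \overline{\Omega})$ therefore receives precisely its energy contribution and nothing else, while edges inside $\delta\Omega$ contribute zero on both sides; summing gives the asserted identity.

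There is no serious obstacle here: the only thing to watch is the combinatorial accounting of how each edge is counted — interior edges twice (once per orientation) in the interior term, boundary edges once in each of the two terms — together with the convention that $w$ vanishes off $E(\Omega, \overline{\Omega})$, which is what removes any contribution from $E(\delta\Omega, \delta\Omega)$. Since $\Omega$ is finite and $G$ is locally finite, all sums are finite and the rearrangement requires no convergence justification.
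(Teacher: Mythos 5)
Your computation is correct and complete: the expansions of $\langle \mathcal{L}f, g\rangle_{\Omega}$ and $\langle \frac{\partial f}{\partial n}, g\rangle_{\delta\Omega}$ match the paper's definitions, the edge-by-edge accounting (interior edges counted once per orientation in the interior term, boundary edges split between the two terms, edges in $E(\delta\Omega,\delta\Omega)$ killed by the convention $w=0$ off $E(\Omega,\overline{\Omega})$) is exactly right, and finiteness of $\Omega$ with local finiteness justifies the rearrangement. The paper states this lemma as well known and gives no proof, and your direct summation-by-parts argument is precisely the standard one it implicitly relies on.
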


\section{First Dirichlet Eigenvalue}

Let $G = (V, E, w, m)$ be a graph, $\Omega \subseteq V$ be a finite subset. Let $\lambda_1^D(\Omega)$ be the first Dirichlet eigenvalue on $\Omega$. By the well-known Rayleigh quotient characterization,
\[
 \lambda_1^D(\Omega) = \inf_{\substack{f \in \mathbb{R}^{\overline\Omega}, f|_{\delta \Omega}=0 \\ f \not\equiv 0}}\frac{\mathcal{E}_{\Omega}(f, f)}{\|f\|_{l^2(\Omega)}^2}.
\]
Moreover, we know that if $f$ is a first eigenfunction, then $|f|>0$ on $\Omega$, see, e.g., \cite{Dodziuk1984}.

The following co-area formula proved in \cite{SS2019} plays a crucial role in proving Theorem \ref{CheegerInequalityForDirichlet}. For the convenience of readers, we give a proof here.
\begin{lemma}(\cite{SS2019})\label{CoareaFormula}
 For any $f \in \mathbb{R}^{\overline{\Omega}}$, we have
 \[
  2\mathcal{E}_{\Omega}(f, f) \geq \int_0^{\infty}t\mathrm{Cap}_{\Omega}(\{f>t\}, \{f \leq 0\})dt.
 \]
\end{lemma}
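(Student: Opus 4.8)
The plan is to avoid the naive level-by-level estimate and instead exploit the equilibrium potentials of the capacities. First I would reduce to the case $f\ge 0$: since $r\mapsto\max(r,0)$ is $1$-Lipschitz we have $\mathcal E_\Omega(f_+,f_+)\le\mathcal E_\Omega(f,f)$ with $f_+:=\max(f,0)$, while for every $t>0$ the sets $\{f>t\}$ and $\{f\le0\}$ coincide with $\{f_+>t\}$ and $\{f_+\le0\}$, so the right-hand side is unchanged; hence it suffices to treat $f\ge0$. If $\{f\le0\}=\varnothing$ the constant function shows $g(t):=\mathrm{Cap}_\Omega(\{f>t\},\{f\le0\})=0$ and there is nothing to prove, so assume $\{f\le0\}\ne\varnothing$. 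Since $f$ takes finitely many values on the finite set $\overline\Omega$, the function $g$ is nonincreasing, piecewise constant, and vanishes for $t\ge\max_{\overline\Omega}f$. The obstacle to a one-line proof is that bounding each $g(t)$ by the energy of the obvious competitor $\tfrac1t(f\wedge t)$ and integrating diverges: that competitor's energy decays only like $1/t$ along edges crossing the level $0$, so $\int_0^\infty t\,\mathcal E_\Omega(\tfrac1t(f\wedge t),\tfrac1t(f\wedge t))\,dt=\infty$. One must therefore retain the cancellation encoded in the true minimizers.

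For each $t>0$ let $u_t\in l_0(\overline\Omega)$ be a minimizer for $\mathrm{Cap}_\Omega(\{f>t\},\{f\le0\})$; truncating between $0$ and $1$ we may assume $0\le u_t\le1$, and $u_t\equiv 0$ once $\{f>t\}=\varnothing$. As the constraint sets change only at the finitely many values of $f$, the map $t\mapsto u_t$ is a step function, so all integrals below are in fact finite sums. The key algebraic input is the identity
\[
 \mathcal E_\Omega(u_t,u_s)=g(\max\{s,t\}),\qquad s,t>0.
\]
Indeed, for $t\le s$ the difference $u_t-u_s$ vanishes on $\{f>s\}\cup\{f\le0\}$ (on $\{f\le0\}$ both are $0$; on $\{f>s\}\subseteq\{f>t\}$ both equal $1$), and the Euler--Lagrange relation for the minimizer $u_s$ gives $\mathcal E_\Omega(u_s,\varphi)=0$ for every $\varphi\in l_0(\overline\Omega)$ supported off $\{f>s\}\cup\{f\le0\}$; hence $\mathcal E_\Omega(u_s,u_t-u_s)=0$. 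Setting $U:=\int_0^\infty u_t\,dt\in\mathbb R^{\overline\Omega}$ and using bilinearity,
\[
 \mathcal E_\Omega(U,U)=\int_0^\infty\!\!\int_0^\infty g(\max\{s,t\})\,ds\,dt=2\int_0^\infty t\,g(t)\,dt.
\]

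It remains to bound the same integral from above by $\mathcal E_\Omega(U,f)$. For fixed $t$ the function $\tfrac1t(f\wedge t)$ is admissible for $g(t)$ and differs from $u_t$ only off $\{f>t\}\cup\{f\le0\}$, so by the same orthogonality $\mathcal E_\Omega(u_t,f\wedge t)=t\,g(t)$. Writing $f=(f\wedge t)+\max(f-t,0)$ and noting that $\max(f-t,0)$ is nonnegative and supported in $\{f>t\}$, where $u_t$ attains its maximum $1$, a sign check on each edge gives $\mathcal E_\Omega(u_t,\max(f-t,0))\ge0$, whence $\mathcal E_\Omega(u_t,f)\ge t\,g(t)$. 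Integrating in $t$ and using bilinearity once more yields $\mathcal E_\Omega(U,f)\ge\int_0^\infty t\,g(t)\,dt$. Now the Cauchy--Schwarz inequality for $\mathcal E_\Omega$, combined with the computation of $\mathcal E_\Omega(U,U)$, gives
\[
 \int_0^\infty t\,g(t)\,dt\le\mathcal E_\Omega(U,f)\le\mathcal E_\Omega(U,U)^{1/2}\,\mathcal E_\Omega(f,f)^{1/2}=\Big(2\!\int_0^\infty t\,g(t)\,dt\Big)^{1/2}\mathcal E_\Omega(f,f)^{1/2}.
\]
Squaring and cancelling one factor of $\int_0^\infty t\,g(t)\,dt$ (the case where it vanishes being trivial) gives $\int_0^\infty t\,g(t)\,dt\le 2\,\mathcal E_\Omega(f,f)$, which is the claim. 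The delicate point is precisely this self-referential use of Cauchy--Schwarz: it is the identity $\mathcal E_\Omega(U,U)=2\int_0^\infty t\,g(t)\,dt$ that makes the constant close up to the asserted value, and the entire argument hinges on the orthogonality identity $\mathcal E_\Omega(u_t,u_s)=g(\max\{s,t\})$ established above.
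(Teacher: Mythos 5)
Your proof is correct and takes essentially the same route as the paper's: the same equilibrium potentials for $\mathrm{Cap}_{\Omega}(\{f>t\},\{f\le 0\})$, the same orthogonality identity $\mathcal{E}_{\Omega}(u_s,u_t)=g(\max\{s,t\})$ yielding $\mathcal{E}_{\Omega}\left(\int_0^{\infty}u_t\,dt,\int_0^{\infty}u_t\,dt\right)=2\int_0^{\infty}t\,g(t)\,dt$, and the same self-referential Cauchy--Schwarz step that closes the constant. The only cosmetic difference is how you get $t\,g(t)\le\mathcal{E}_{\Omega}(u_t,f)$: you reduce to $f\ge 0$ and split $f=(f\wedge t)+\max(f-t,0)$ with an edgewise sign check, whereas the paper derives the same estimate via Green's formula and the sign of $\mathcal{L}h_t$ on the constraint sets.
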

\begin{proof}
 For any $t > 0$, there exists a function $h_t$ satisfying $h_t|_{\{f > t\}} = 1$ and $h_t|_{\{f \leq 0\}} = 0$ , such that
 \[
  \mathcal{E}_{\Omega}(h_t, h_t) = \mathrm{Cap}_{\Omega}(\{f > t\}, \{f \leq 0\}).
 \]
 Then we have
 \begin{align*}
  t\mathrm{Cap}_{\Omega}(\{f>t\}, \{f \leq 0\}) &= t\mathcal{E}_{\Omega}(h_t, h_t) \\
  &= \langle \mathcal{L}h_t, th_t \rangle_{\Omega} + \langle \frac{\partial h_t}{\partial n}, th_t \rangle_{\delta \Omega} \\
  &\leq \langle f, \mathcal{L}h_t \rangle_{\Omega} + \langle \frac{\partial h_t}{\partial n}, f \rangle_{\delta \Omega}\\
  &= \mathcal{E}_{\Omega}(h_t, f),
 \end{align*}
 where we used \eqref{GreenFormula} twice. Since $\int_0^{\infty} \mathcal{E}_{\Omega}(h_t, f)dt = \mathcal{E}_{\Omega}(f, \int_0^{\infty}h_t dt)$, we get
 \begin{align*}
  \left(\int_0^{\infty} t\mathrm{Cap}_{\Omega}(\{f>t\}, \{f \leq 0\})dt\right)^2 &\leq \left(\mathcal{E}_{\Omega}(f, \int_0^{\infty}h_t dt)\right)^2 \\
  &\leq \mathcal{E}_{\Omega}(f, f) \mathcal{E}_{\Omega}\left(\int_0^{\infty} h_t dt, \int_0^{\infty} h_t dt\right).
 \end{align*}
 By definition of the energy functional, we have
 \begin{align*}
  \mathcal{E}_{\Omega}\left(\int_0^{\infty} h_t dt, \int_0^{\infty} h_t dt\right) &= \int_0^{\infty}\int_0^{\infty} \mathcal{E}_{\Omega}(h_s, h_t) dsdt \\
  &= 2\int_0^{\infty}\int_0^t \mathcal{E}_{\Omega}(h_s, h_t) dsdt.  
 \end{align*}
 Note that when $t > s$, 
 \begin{align*}
  \mathcal{E}_{\Omega}(h_s, h_t) &= \langle \mathcal{L}h_t, h_s \rangle_{\Omega} + \langle \frac{\partial h_t}{\partial n}, h_s \rangle_{\delta \Omega} \\
  &= \sum_{x \in \mathring{\delta} \{f>t\} \cap \Omega}\mathcal{L}h_t(x)h_s(x)m(x) + \sum_{x \in \mathring{\delta}\{f > t\} \cap \delta \Omega}\frac{\partial h_t}{\partial n}(x)h_s(x)m(x) \\
  &=\sum_{x \in \mathring{\delta} \{f>t\} \cap \Omega}\mathcal{L}h_t(x)h_t(x)m(x) + \sum_{x \in \mathring{\delta}\{f>t\} \cap \delta \Omega}\frac{\partial h_t}{\partial n}(x)h_t(x)m(x) \\
  &=\langle \mathcal{L}h_t, h_t \rangle_{\Omega} + \langle \frac{\partial h_t}{\partial n}, h_t \rangle_{\delta \Omega}= \mathcal{E}_{\Omega}(h_t, h_t).
 \end{align*}
 Thus,
 \[
  \mathcal{E}_{\Omega}\left(\int_0^{\infty} h_t dt, \int_0^{\infty} h_t dt\right) = 2\int_0^{\infty} t\mathrm{Cap}_{\Omega}(\{f>t\}, \{f \leq 0\})dt,  
 \]
 and the conclusion follows.
\end{proof}
Now, we are ready to prove Theorem \ref{CheegerInequalityForDirichlet}
\begin{proof}[Proof of Theorem \ref{CheegerInequalityForDirichlet}]
 We first show that $\lambda_1^D(\Omega) \leq \alpha_D(\Omega)$. Recall that 
 \[
  \alpha_D(\Omega) = \inf_{A \subseteq \Omega} \frac{\mathrm{Cap}_{\Omega}(A)}{m(A)}. 
 \]
 Let $A \subseteq \Omega$ be a finite subset such that 
 \[
  \alpha_D(\Omega) = \frac{\mathrm{Cap}_{\Omega}(A)}{m(A)}.
 \]
 Then there exists a function $f$ satisfying $f|_A = 1$ and $f|_{\delta\Omega} =0$, such that $\mathcal{E}_{\Omega}(f, f) = \mathrm{Cap}_{\Omega}(A)$. Note that $f$ can be used as a test function in the Rayleigh quotient characterization. Since $f|_A=1$, we have
 \[
  \|f\|_{l^2(\Omega)}^2 = \sum_{x \in \Omega} f^2(x)m(x) \geq \sum_{x \in A} m(x) = m(A).
 \]
 It follows that
 \[
  \alpha_D(\Omega) = \frac{\mathcal{E}_{\Omega}(f, f)}{m(A)} \geq \frac{\mathcal{E}_{\Omega}(f, f)}{\|f\|_{l^2(\Omega)}^2}\geq \lambda_1^D(\Omega).
 \]
 To show that $\frac14\alpha_D(\Omega) \leq \lambda_1^D(\Omega)$, let $\phi > 0$ be a first eigenfunction. Then by Lemma \ref{CoareaFormula}, noting that $\{\phi \leq 0\} = \delta \Omega$, we have
 \begin{align*}
  2\lambda_1^D(\Omega)\|\phi\|_{l^2(\Omega)}^2 &= 2\mathcal{E}_{\Omega}(\phi, \phi) \geq \int_0^{\infty} t\mathrm{Cap}_{\Omega}(\{\phi > t\}, \{\phi \leq 0\})dt \\
  &\geq \int_0^{\infty} t\alpha_D(\Omega)m(\{\phi > t\}) dt \\
  &=\alpha_D(\Omega) \int_0^{\infty}t \sum_{x \in \{\phi > t\}}m(x) dt \\
  &=\alpha_D(\Omega)\sum_{x \in \{\phi > 0\}}m(x) \int_0^{\phi(x)} t dt \\
  &=\frac12\alpha_D(\Omega) \|\phi\|_{l^2(\Omega)}^2,
 \end{align*}
 i.e., $\lambda_1^D(\Omega) \geq \frac14\alpha_D(\Omega)$. This proves the theorem.
\end{proof}

In the following, we estimate the bottom of the spectrum of an infinite weighted graph $G$ by applying the exhaustion methods and \eqref{CheegerInequalityForDirichlet}. 

For an infinite weighted graph $G = (V, E, w, m)$, we define the $k$-th variational eigenvalue of the Laplacian on $G$ as
\begin{equation}\label{KthEigenvalueOfLaplacian}
\lambda_k(G):= \inf_{\substack{H \subseteq l_0(V) \\ \dim H = k}}\max_{0 \neq f \in H}\frac{\mathcal{E}(f, f)}{\|f\|_{l^2(V)}^2}.
\end{equation}
For $k = 1$, $\lambda_1(G)$ is called the bottom of the spectrum of the Laplacian. Let $\mathcal{W} = \{W_i\}_{i=1}^{\infty}$ be an exhaustion of $G$. For any $k \geq 1$, we denote by $\lambda_k^D(W_i)$ the $k$-th Dirichlet eigenvalue on $W_i$. Then by the Rayleigh quotient characterization, for any $i \in \mathbb{N}_+$,
\[
 \lambda_k^D(W_i) \geq \lambda_k^D(W_{i+1}).
\]
One can also show that
\[
 \lambda_k(G) = \lim_{i \to \infty}\lambda_k^D(W_i).
\]
For any $A \subseteq V$ with $|A|<+\infty$, there exists $i \in \mathbb{N}_+$ such that $A \subseteq W_i$. Since
\[
 \mathrm{Cap}_{W_i}(A) = \inf\{\mathcal{E}_{W_i}(f, f): f|_{A} = 1, f|_{\delta W_i} = 0\},
\]
one easily checks that
\[
 \mathrm{Cap}_{W_i}(A) \geq \mathrm{Cap}_{W_{i+1}}(A).
\]
\begin{proof}[Proof of Theorem \ref{CheegerInequalityForBottom}]
 By applying Theorem \ref{CheegerInequalityForDirichlet} and the above argument, we have
 \[
  \frac14\lim_{i \to \infty}\alpha_D(W_i) \leq \lambda_1(G) \leq \lim_{i \to \infty}\alpha_D(W_i). 
 \]
 We only need to prove that $\alpha_D(G) = \lim_{i \to \infty}\alpha_D(W_i)$.
 
 Recall that 
 \[
  \alpha_D(G) = \inf_{\substack{A \subseteq V \\ |A|<+\infty}}\frac{\mathrm{Cap}(A)}{m(A)}.
 \]
 For any $A \subseteq V$ with $|A|<+\infty$, there exist $i \in \mathbb{N}_+$ such that $A \subseteq W_i$. Then we have
 \begin{align*}
  \frac{\mathrm{Cap}(A)}{m(A)} &= \lim_{i \to \infty}\frac{\mathrm{Cap}_{W_i}(A)}{m(A)} \\
  &\geq \limsup_{i \to \infty}\alpha_D(W_i),
 \end{align*}
 and hence,
 \[
  \alpha_D(G) \geq \limsup_{i \to \infty}\alpha_D(W_i).
 \]
 On the other hand, for any $i \in \mathbb{N}_+$, let $A_i \subseteq W_i$ be a finite subset such that
 \[
  \alpha_D(W_i) = \frac{\mathrm{Cap}_{W_i}(A_i)}{m(A_i)}.
 \]
 Then $|A_i|<+\infty$ and
 \begin{align*}
  \alpha_D(W_i) &= \frac{\mathrm{Cap}_{W_i}(A_i)}{m(A_i)} \\
  &\geq \lim_{j \to \infty} \frac{\mathrm{Cap}_{W_j}(A_i)}{m(A_i)} \\
  &= \frac{\mathrm{Cap}(A_i)}{m(A_i)} \geq \alpha_D(G).
 \end{align*}
 It follows that $\liminf_{i \to \infty}\alpha_D(W_i) \geq \alpha_D(G)$. This proves the result.
\end{proof}

As a corollary of Theorem \ref{CheegerInequalityForBottom}, we have the following proposition.
\begin{proposition}\label{ZeroBottomAndRecurrent}
 Let $G = (V, E, w, m)$ be an infinite weighted graph. If $G$ is recurrent, then $\lambda_1(G) = 0$.
\end{proposition}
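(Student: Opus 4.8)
The plan is to derive everything from the upper bound in Theorem~\ref{CheegerInequalityForBottom}, namely $\lambda_1(G)\le\alpha_D(G)$, together with the standard characterization of recurrence in terms of capacity. Since $\mathcal{E}(f,f)\ge 0$ always, we have $\lambda_1(G)\ge 0$, so it suffices to show $\alpha_D(G)=0$; the conclusion $\lambda_1(G)=0$ then follows by squeezing.

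Recall that $\alpha_D(G)$ is an infimum over all finite $A\subseteq V$ of $\mathrm{Cap}(A)/m(A)$. Because each $m(x)>0$, to make this infimum vanish it is enough to exhibit a single finite set of zero capacity. The natural candidate is a single vertex: fix any $o\in V$ and consider $A=\{o\}$. First I would recall the definition $\mathrm{Cap}(\{o\})=\lim_{i\to\infty}\mathrm{Cap}_{W_i}(\{o\})$, where $\mathrm{Cap}_{W_i}(\{o\})=\inf\{\mathcal{E}_{W_i}(f,f): f(o)=1,\ f|_{\delta W_i}=0\}$ is the capacity of $o$ against the boundary of $W_i$ inside the finite network $G_{W_i}$.

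The heart of the matter is the classical fact from the theory of electrical networks and Dirichlet forms on graphs that $G$ is recurrent if and only if $\mathrm{Cap}(\{o\})=0$ for some, equivalently every, vertex $o$. Here $\mathrm{Cap}_{W_i}(\{o\})$ is precisely the reciprocal of the effective resistance from $o$ to $\delta W_i$, and recurrence means this resistance tends to $+\infty$ as $W_i\uparrow V$, i.e. the effective resistance from $o$ to infinity is infinite; equivalently $\mathrm{Cap}(\{o\})=\lim_i\mathrm{Cap}_{W_i}(\{o\})=0$. I would invoke this equivalence, citing a standard reference on transient and recurrent networks or the paper's own working definition of recurrence, rather than reprove it.

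Granting this, recurrence of $G$ gives $\mathrm{Cap}(\{o\})=0$, hence $\mathrm{Cap}(\{o\})/m(o)=0$, so $\alpha_D(G)=0$. Combined with Theorem~\ref{CheegerInequalityForBottom} this yields $0\le\lambda_1(G)\le\alpha_D(G)=0$, i.e. $\lambda_1(G)=0$. The only genuine obstacle is pinning down the link between the paper's notion of recurrence and the vanishing of $\mathrm{Cap}(\{o\})$; once the definition of recurrence is fixed (return with probability one, or the absence of nonconstant finite-energy functions of zero energy, or infinite effective resistance to infinity), this is the classical capacity criterion and needs only the identification of $\lim_i\mathrm{Cap}_{W_i}(\{o\})$ with the reciprocal of the effective resistance to infinity.
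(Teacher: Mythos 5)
Your proposal is correct and follows essentially the same route as the paper: both reduce the claim to showing $\alpha_D(G)=0$ via the classical capacity characterization of recurrence (the paper cites \cite{Woess2000}, Theorem 2.12, stating $\mathrm{Cap}(A)=0$ for every finite $A$; you use the equivalent single-vertex version) and then apply the upper bound of Theorem~\ref{CheegerInequalityForBottom}. Your additional remarks on effective resistance only elaborate the cited equivalence and do not change the argument.
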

\begin{proof}
 Note that $G$ is recurrent if and only if we have $\mathrm{Cap}(A) = 0$ for any finite subset $A \subseteq V$, see, e.g., \cite{Woess2000} Theorem 2.12. Thus, if $G$ is recurrent, then $\alpha_D(G) = 0$ and the conclusion follows from Theorem \ref{CheegerInequalityForBottom}.
\end{proof}

At the end of this section, we give an example for the transient case. For $N \geq 1$, let $(\mathbb{Z}^N, E)$ be the $N$-dimensional lattice graph with unit edge weights, where
 \[
  E = \left\{\{x, y\}: x, y \in \mathbb{Z}^N, \sum_{i=1}^{N}|x_i - y_i| = 1\right\}.
 \]
\begin{example}\label{TransientCase}
 For $N \geq 3$, consider $G = (\mathbb{Z}^N, E)$. Suppose that the volume of $\mathbb{Z}^N$ is finite, i.e., $m(\mathbb{Z}^N) < +\infty$. Then according to \cite[Theorem 1.6]{HKSW2023}, there exists a constant $C>0$ such that
 \[
  \lambda_1(G) \geq C\left(\frac{1}{m(\mathbb{Z}^N)}\right)^{1-\frac{2}{p}} = C\left(\frac{1}{m(\mathbb{Z}^N)}\right)^{\frac{2}{N}},
 \]
 where $p = \frac{2N}{N-2}$. By applying Theorem \ref{CheegerInequalityForBottom}, we can get a new estimate. Note that for any finite $A \subset \mathbb{Z}^N$, 
 \[
  \frac{\mathrm{Cap}(A)}{m(A)} \geq \frac{\mathrm{Cap}(\{p\})}{m(\mathbb{Z}^n)}, \quad \forall \ p \in A.
 \]
 Since $N \geq 3$, for any $p \in \mathbb{Z}^N$, by \cite[Theorem 2.12]{Woess2000},
 \[
  \mathrm{Cap}(\{p\}) = C(N).
 \]
 Then
 \[
  \alpha_D(G) \geq \frac{C(N)}{m(\mathbb{Z}^N)}.
 \]
 On the other hand, 
 \[
  \alpha_D(G) \leq \inf_{p \in \mathbb{Z}^N} \frac{\mathrm{Cap}(\{p\})}{m(\{p\})}.
 \]
 Thus, by Theorem \ref{CheegerInequalityForBottom}, we have
 \[
  \frac{C(N)}{m(\mathbb{Z}^N)} \leq \lambda_1(G) \leq \inf_{p \in \mathbb{Z}^N} \frac{\mathrm{Cap}(\{p\})}{m(\{p\})}. 
 \]
 Note that the lower bound $\frac{1}{m(\mathbb{Z}^N)}$ is better than the estimate in \cite[Theorem 1.6]{HKSW2023}, $\left(\frac{1}{m(\mathbb{Z}^N)}\right)^{\frac{2}{N}}$, if $m(\mathbb{Z}^N) < 1$.
\end{example}

\section{First Neumann Eigenvalue}

In this section, we estimate the first non-trivial Neumann eigenvalue. Let $G = (V, E, w, m)$ be a weighted graph, $\Omega \subseteq V$ be a finite subset satisfying $|\Omega| \geq 2$. Consider the graph $G_{\Omega}$. The Neumann problem on $\Omega$ is equivalent to the following eigenvalue problem
\begin{equation*}
 \begin{cases}
  \widetilde{\mathcal{L}}f(x) = \lambda f(x), \quad &x \in \Omega, \\
  \widetilde{\mathcal{L}}f(x) = 0, &x \in \delta \Omega,
 \end{cases}
\end{equation*}
where $\widetilde{\mathcal{L}}$ is the Laplacian on $G_{\Omega}$. This is a special case of the Steklov problem studied in \cite{HM2020}.

Let us first consider the eigenvalue problem of the Laplace operator on the graph $G_{\Omega}$, i.e.,
\[
 \widetilde{\mathcal{L}}f(x) = \lambda f(x), \quad \forall \ x \in \overline{\Omega}.
\]
We denote by $\lambda_1^{\widetilde{\mathcal{L}}}$ the first non-trivial eigenvalue of $\widetilde{\mathcal{L}}$. By Rayleigh quotient characterization,
\[
 \lambda_1^{\widetilde{\mathcal{L}}} = \inf_{\substack{f \in \mathbb{R}^{\overline{\Omega}} \\ f \not\equiv 0}}\frac{\mathcal{E}_{\Omega}(f, f)}{\min\limits_{c \in \mathbb{R}}\|f-c\|_{l^2(\overline{\Omega})}^2}.
\]
Then we have the following estimate, see also \cite{SS2019}.
\begin{theorem}
 The first non-trivial eigenvalue $\lambda_1^{\widetilde{\mathcal{L}}}$ satisfies
 \begin{equation}\label{CheegerEstimate}
  \frac18\widetilde{\alpha}_{\widetilde{\mathcal{L}}} \leq \lambda_1^{\widetilde{\mathcal{L}}} \leq 2\widetilde{\alpha}_{\widetilde{\mathcal{L}}},
 \end{equation}
 where $\widetilde{\alpha}_{\widetilde{\mathcal{L}}}:= \inf\limits_{A, B \subseteq \overline{\Omega}}\frac{\mathrm{Cap}_{\Omega}(A, B)}{m(A) \wedge m(B)}$.
\end{theorem}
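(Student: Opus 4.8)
The plan is to bound $\lambda_1^{\widetilde{\mathcal{L}}}$ from both sides, using the Rayleigh quotient characterization $\lambda_1^{\widetilde{\mathcal{L}}} = \inf_{f \not\equiv 0} \mathcal{E}_{\Omega}(f,f)/\min_{c}\|f-c\|_{l^2(\overline{\Omega})}^2$ for the upper bound and the co-area formula (Lemma \ref{CoareaFormula}) for the lower bound. The argument runs parallel to the Dirichlet case in Theorem \ref{CheegerInequalityForDirichlet}, the essential new feature being that the relevant level sets must now be balanced against a \emph{median} of the eigenfunction rather than against the boundary where $\phi$ vanishes.

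For the upper bound $\lambda_1^{\widetilde{\mathcal{L}}} \le 2\widetilde{\alpha}_{\widetilde{\mathcal{L}}}$, I would fix disjoint nonempty $A, B \subseteq \overline{\Omega}$ and take the capacitor $f \in l_0(\overline{\Omega})$ realizing $\mathrm{Cap}_{\Omega}(A,B)$, so that $f|_A = 1$, $f|_B = 0$, and $\mathcal{E}_{\Omega}(f,f) = \mathrm{Cap}_{\Omega}(A,B)$; its existence is guaranteed by the discussion following \eqref{DefOfCap}. Using $f$ as a test function, the only point to verify is a lower bound on the denominator. Since $f \equiv 1$ on $A$ and $f \equiv 0$ on $B$, for every $c \in \mathbb{R}$ one has $\|f-c\|_{l^2(\overline{\Omega})}^2 \ge (1-c)^2 m(A) + c^2 m(B)$, and minimizing the right-hand side over $c$ (attained at $c = m(A)/(m(A)+m(B))$) gives $\min_c \|f-c\|_{l^2(\overline{\Omega})}^2 \ge m(A)m(B)/(m(A)+m(B)) \ge \tfrac12\,(m(A)\wedge m(B))$. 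Substituting into the Rayleigh quotient yields $\lambda_1^{\widetilde{\mathcal{L}}} \le 2\,\mathrm{Cap}_{\Omega}(A,B)/(m(A)\wedge m(B))$, and taking the infimum over $A,B$ finishes this direction.

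For the lower bound, I would take a first non-trivial eigenfunction $\phi$, normalized so that $\sum_{x \in \overline{\Omega}}\phi(x)m(x) = 0$; then $\|\phi\|_{l^2(\overline{\Omega})}^2 = \min_c\|\phi - c\|_{l^2(\overline{\Omega})}^2$ and $\mathcal{E}_{\Omega}(\phi,\phi) = \lambda_1^{\widetilde{\mathcal{L}}}\|\phi\|_{l^2(\overline{\Omega})}^2$. Choose a median $m_0$ of $\phi$ with respect to $m$, i.e. a value with $m(\{\phi \ge m_0\}) \ge \tfrac12 m(\overline{\Omega})$ and $m(\{\phi \le m_0\}) \ge \tfrac12 m(\overline{\Omega})$, and set $\psi := \phi - m_0$, writing $\psi_+ := \max(\psi,0)$ and $\psi_- := \max(-\psi,0)$. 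Then $\mathcal{E}_{\Omega}(\psi,\psi) = \mathcal{E}_{\Omega}(\phi,\phi)$, while $\|\psi\|_{l^2(\overline{\Omega})}^2 \ge \|\phi\|_{l^2(\overline{\Omega})}^2$ by the mean-zero normalization. The median condition forces $m(\{\psi > t\}) \le \tfrac12 m(\overline{\Omega}) \le m(\{\psi \le 0\})$ for every $t>0$, so that $\mathrm{Cap}_{\Omega}(\{\psi > t\}, \{\psi \le 0\}) \ge \widetilde{\alpha}_{\widetilde{\mathcal{L}}}\, m(\{\psi > t\})$ directly from the definition of $\widetilde{\alpha}_{\widetilde{\mathcal{L}}}$. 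Applying Lemma \ref{CoareaFormula} to $\psi$ and evaluating $\int_0^\infty t\, m(\{\psi>t\})\,dt = \tfrac12\|\psi_+\|_{l^2(\overline{\Omega})}^2$ gives $2\mathcal{E}_{\Omega}(\psi,\psi) \ge \tfrac12\widetilde{\alpha}_{\widetilde{\mathcal{L}}}\|\psi_+\|_{l^2(\overline{\Omega})}^2$; running the identical argument for $-\psi$ (whose median is $-m_0$) produces the matching bound for $\psi_-$. Adding the two estimates and using $\|\psi_+\|_{l^2(\overline{\Omega})}^2 + \|\psi_-\|_{l^2(\overline{\Omega})}^2 = \|\psi\|_{l^2(\overline{\Omega})}^2 \ge \|\phi\|_{l^2(\overline{\Omega})}^2$ gives $4\mathcal{E}_{\Omega}(\psi,\psi) \ge \tfrac12\widetilde{\alpha}_{\widetilde{\mathcal{L}}}\|\phi\|_{l^2(\overline{\Omega})}^2$, which upon dividing by $\|\phi\|_{l^2(\overline{\Omega})}^2$ is exactly $\lambda_1^{\widetilde{\mathcal{L}}} \ge \tfrac18 \widetilde{\alpha}_{\widetilde{\mathcal{L}}}$.

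The main obstacle is organizing the lower bound so that both the positive and negative excursions of the eigenfunction are captured. The co-area formula controls only the energy lying above level $0$, so a single application misses the negative part entirely; the remedy is the median shift, which makes $0$ a genuine separating level for both $\psi$ and $-\psi$, combined with applying the formula to each and recombining. The one inequality needing care is $\|\psi\|_{l^2(\overline{\Omega})}^2 \ge \|\phi\|_{l^2(\overline{\Omega})}^2$, which is precisely where the mean-zero normalization enters, guaranteeing the median shift does not decrease the $l^2$-norm. I note that replacing the direct double application by the sharper convexity inequality $\mathcal{E}_{\Omega}(\psi,\psi) \ge \mathcal{E}_{\Omega}(\psi_+,\psi_+) + \mathcal{E}_{\Omega}(\psi_-,\psi_-)$ would improve the constant from $\tfrac18$ to $\tfrac14$, but $\tfrac18$ is all that is needed here.
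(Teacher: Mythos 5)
Your proposal is correct and follows essentially the same route as the paper: the identical capacitor test function argument for the upper bound, and the median shift combined with the co-area formula (Lemma \ref{CoareaFormula}) for the lower bound, with your two applications of the co-area formula to $\psi$ and $-\psi$ followed by summation being just a symmetrized version of the paper's ``w.l.o.g.\ $\|f^+\|_{l^2(\overline{\Omega})}^2 \geq \frac12\|f\|_{l^2(\overline{\Omega})}^2$'' step, and your mean-zero normalization playing the role of the paper's inequality $\mathcal{E}_{\Omega}(f,f) \leq \lambda_1^{\widetilde{\mathcal{L}}}\|f\|_{l^2(\overline{\Omega})}^2$ for the shifted eigenfunction. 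Your closing observation that the convexity inequality $\mathcal{E}_{\Omega}(\psi,\psi) \geq \mathcal{E}_{\Omega}(\psi_+,\psi_+) + \mathcal{E}_{\Omega}(\psi_-,\psi_-)$ would improve the constant to $\frac14$ is also correct, though not used by the paper.
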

\begin{proof}
 We first show that $\lambda_1^{\widetilde{\mathcal{L}}} \leq 2\widetilde{\alpha}_{\widetilde{\mathcal{L}}}$. Let $A, B \subseteq \overline{\Omega}$ be subsets such that 
 \[
  \widetilde{\alpha}_{\widetilde{\mathcal{L}}} = \frac{\mathrm{Cap}_{\Omega}(A, B)}{m(A) \wedge m(B)}.
 \]
 Then there exists a function $f$ satisfying $f|_A = 1$ and $f|_B = 0$, such that
 \[
  \mathcal{E}_{\Omega}(f, f) = \mathrm{Cap}_{\Omega}(A, B) = \widetilde{\alpha}_{\widetilde{\mathcal{L}}}\left(m(A) \wedge m(B)\right).
 \]
 Then for any $c \in \mathbb{R}$,
 \begin{align*}
  \|f-c\|_{l^2(\overline{\Omega})}^2 &\geq \sum_{x \in A}|f(x) - c|^2m(x) + \sum_{x \in B}|f(x) - c|^2m(x) \\
  &=(1-c)^2m(A) + c^2m(B) \\
  &\geq (m(A) \wedge m(B))(c^2 + (1-c)^2) \\
  &\geq \frac12(m(A) \wedge m(B)).
 \end{align*}
 Thus,
 \[
  \min_{c \in \mathbb{R}}\|f-c\|_{l^2(\overline{\Omega})}^2 \geq \frac12(m(A) \wedge m(B)).
 \]
 It follows that
 \[
  2\widetilde{\alpha}_{\widetilde{\mathcal{L}}} = \frac{2\mathcal{E}_{\Omega}(f, f)}{m(A) \wedge m(B)} \geq \frac{\mathcal{E}_{\Omega}(f, f)}{\min_{c \in \mathbb{R}}\|f-c\|_{l^2(\overline{\Omega})}^2} \geq \lambda_1^{\widetilde{\mathcal{\mathcal{L}}}}.
 \]
 To show that $\frac{1}{8}\widetilde{\alpha}_{\widetilde{\mathcal{L}}} \leq \lambda_1^{\widetilde{\mathcal{L}}}$, let $\phi \in \mathbb{R}^{\overline{\Omega}}$ be a first eigenfunction. Then there exists $c_1 \in \mathbb{R}$ such that $m\left(\{\phi + c_1 \leq 0\}\right) \geq \frac12m(\overline{\Omega})$ and $m\left(\{\phi + c_1 \geq 0\}\right) \geq \frac12m(\overline{\Omega})$. Denote $f = \phi + c_1$. Since $\|f\|_{l^2(\overline{\Omega})}^2 = \|f^+\|_{l^2(\overline{\Omega})}^2 + \|f^-\|_{l^2(\overline{\Omega})}^2$, where $f^+ = \max\{f, 0\}$ and $f^-=\max\{-f,0\}$, w.l.o.g., we assume that $\|f^+\|_{l^2(\overline{\Omega})}^2 \geq \frac12\|f\|_{l^2(\overline{\Omega})}^2$. Then by Lemma \ref{CoareaFormula},
 \begin{align*}
  2\lambda_1^{\widetilde{\mathcal{L}}}\|f\|_{l^2(\overline{\Omega})}^2 &\geq 2\mathcal{E}_{\Omega}(f, f) \geq \int_0^{\infty} t\mathrm{Cap}_{\Omega}(\{f>t\}, \{f\leq 0\})dt \\
  &\geq \int_0^{\infty} t\widetilde{\alpha}_{\widetilde{\mathcal{L}}}\left(m\left(\{f > t\}\right) \wedge m\left(\{f \leq 0\}\right)\right)dt \\
  &= \int_0^{\infty}t\widetilde{\alpha}_{\widetilde{\mathcal{L}}}m\left(\{f>t\}\right)dt \\
  &=\widetilde{\alpha}_{\widetilde{\mathcal{L}}}\int_0^{\infty}t\sum_{x \in \{f > t\}}m(x)dt \\
  &=\widetilde{\alpha}_{\widetilde{\mathcal{L}}}\sum_{x \in \{f>0\}}m(x)\int_0^{f(x)}tdt \\
  &=\frac12\widetilde{\alpha}_{\widetilde{\mathcal{L}}}\|f^+\|_{l^2(\overline{\Omega})}^2 \geq \frac14\widetilde{\alpha}_{\widetilde{\mathcal{L}}}\|f\|_{l^2(\overline{\Omega})}^2,
 \end{align*}
 i.e., $\lambda_1^{\widetilde{\mathcal{L}}} \geq \frac18\widetilde{\alpha}_{\widetilde{\mathcal{L}}}$. This finishes the proof.
\end{proof}

Now, we construct a sequence of new finite graphs $\{G^{(k)}\}_{k=1}^{\infty}$ with $G^{(1)} = G_{\Omega}$ and $G^{(k)}=(\overline{\Omega}, E(\Omega, \overline{\Omega}), w, m^{(k)})$, where $m^{(k)}|_{\Omega} = m$ and $m^{(k)}|_{\delta \Omega} = \frac{m}{k}$. Denote by $\lambda_1^{(k)}$ the first non-trivial eigenvalue of the Laplacian on the graph $G^{(k)}$. Then by \eqref{CheegerEstimate}, we have
\[
 \frac18\widetilde{\alpha}_{\widetilde{\mathcal{L}}}^{(k)} \leq \lambda_1^{(k)} \leq 2\widetilde{\alpha}_{\widetilde{\mathcal{L}}}^{(k)},
\]
where
\[
 \widetilde{\alpha}_{\widetilde{\mathcal{L}}}^{(k)} = \inf_{A, B \subseteq \overline{\Omega}}\frac{\mathrm{Cap}_{\Omega}(A, B)}{m^{(k)}(A) \wedge m^{(k)}(B)}.
\]
On the other hand, by applying \cite[Proposition 3]{HM2020}, we have
\[
 \lim_{k \to \infty} \lambda_1^{(k)} = \lambda_1^{N}(\Omega).
\]
In order to prove Theorem \ref{CheegerInequalityForNeumann}, we first show that $\lim_{k \to \infty}\widetilde{\alpha}_{\widetilde{\mathcal{L}}}^{(k)}$ is well-defined.

\begin{lemma}\label{MonotoneOfCapacity}
 For any $\Omega \subseteq V$ with $|\Omega| \geq 2$ and any $i \in \mathbb{N}_+$, we have
 \[
  0 < \widetilde{\alpha}_{\widetilde{\mathcal{L}}}^{(i)} \leq \widetilde{\alpha}_{\widetilde{\mathcal{L}}}^{(i+1)} \leq C,
 \]
 where $C$ is a constant depending only on $G_{\Omega}$.
\end{lemma}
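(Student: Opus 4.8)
The plan is to exploit the key structural fact that $\mathrm{Cap}_{\Omega}(A, B)$ depends only on the edge weights $w$ (via $\mathcal{E}_{\Omega}$) and is therefore \emph{identical} for every graph $G^{(k)}$; across the family $\{G^{(k)}\}_{k=1}^{\infty}$ only the vertex weight $m^{(k)}$, which appears in the denominator, changes. Thus all three assertions reduce to tracking the denominator $m^{(i)}(A) \wedge m^{(i)}(B)$ as $i$ grows, while the numerator stays put.

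For the monotonicity $\widetilde{\alpha}_{\widetilde{\mathcal{L}}}^{(i)} \leq \widetilde{\alpha}_{\widetilde{\mathcal{L}}}^{(i+1)}$, I would observe that $m^{(i+1)} \leq m^{(i)}$ pointwise: the two agree on $\Omega$, while on $\delta \Omega$ one has $\frac{m}{i+1} \leq \frac{m}{i}$. Hence $m^{(i+1)}(S) \leq m^{(i)}(S)$ for every $S \subseteq \overline{\Omega}$, so $m^{(i+1)}(A) \wedge m^{(i+1)}(B) \leq m^{(i)}(A) \wedge m^{(i)}(B)$ for every pair $(A, B)$. Since $\mathrm{Cap}_{\Omega}(A, B) \geq 0$ is unchanged, each quotient for index $i+1$ dominates the corresponding quotient for index $i$, and taking the infimum over the same collection of pairs yields $\widetilde{\alpha}_{\widetilde{\mathcal{L}}}^{(i+1)} \geq \widetilde{\alpha}_{\widetilde{\mathcal{L}}}^{(i)}$.

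For the uniform upper bound I would exhibit a single test pair whose quotient is independent of $i$. Since $|\Omega| \geq 2$, choose two distinct vertices $x_0, y_0 \in \Omega$ and take $A = \{x_0\}$, $B = \{y_0\}$. Because $x_0, y_0 \in \Omega$, we have $m^{(i)}(\{x_0\}) = m(x_0)$ and $m^{(i)}(\{y_0\}) = m(y_0)$ for all $i$, while $\mathrm{Cap}_{\Omega}(\{x_0\}, \{y_0\})$ is a fixed finite number determined by $G_{\Omega}$ (finite because $\overline{\Omega}$ is finite, so $\mathbf{1}_{\{x_0\}}$ is an admissible competitor). Therefore
\[
 \widetilde{\alpha}_{\widetilde{\mathcal{L}}}^{(i)} \leq \frac{\mathrm{Cap}_{\Omega}(\{x_0\}, \{y_0\})}{m(x_0) \wedge m(y_0)} =: C
\]
for every $i$, with $C$ depending only on $G_{\Omega}$; in particular $\widetilde{\alpha}_{\widetilde{\mathcal{L}}}^{(i+1)} \leq C$.

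Finally, for strict positivity I would use that $\overline{\Omega}$ is finite, so the infimum defining $\widetilde{\alpha}_{\widetilde{\mathcal{L}}}^{(i)}$ is a minimum over the finitely many pairs of nonempty disjoint $A, B \subseteq \overline{\Omega}$ (overlapping pairs force $\mathrm{Cap}_{\Omega}(A,B) = +\infty$ and may be discarded). For each such pair the denominator is finite and positive, so it suffices to show $\mathrm{Cap}_{\Omega}(A, B) > 0$, and this is exactly where connectivity of $G_{\Omega}$ enters: any admissible $f$ with $f|_A = 1$, $f|_B = 0$ must change value along some edge of a path in $G_{\Omega}$ joining $A$ to $B$, forcing $\mathcal{E}_{\Omega}(f, f) > 0$ and hence $\mathrm{Cap}_{\Omega}(A, B) > 0$. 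A minimum of finitely many strictly positive numbers is strictly positive, giving $\widetilde{\alpha}_{\widetilde{\mathcal{L}}}^{(i)} > 0$. The only step that requires genuine care is this positivity, and there the essential input is the connectivity of $G_{\Omega}$ (the same connectivity that underlies the non-triviality of $\lambda_1^{\widetilde{\mathcal{L}}}$); the monotonicity and the boundedness are then immediate consequences of the explicit shape of $m^{(k)}$.
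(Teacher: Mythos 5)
Your proposal is correct and takes essentially the same route as the paper: monotonicity comes from $m^{(i+1)} \leq m^{(i)}$ pointwise (the paper evaluates this at an optimal pair, you argue quotient by quotient --- the same computation), and the uniform bound comes from the identical test pair of two vertices of $\Omega$, whose weights are untouched by every $m^{(k)}$. The only cosmetic difference is positivity, which the paper dispatches via $\widetilde{\alpha}_{\widetilde{\mathcal{L}}}^{(i)} \geq \widetilde{\alpha}_{\widetilde{\mathcal{L}}} > 0$ while you prove it directly from finiteness and connectivity of $G_{\Omega}$; just note that your passage from ``every admissible $f$ has positive energy'' to $\mathrm{Cap}_{\Omega}(A,B) > 0$ tacitly uses that the capacity is attained on a finite graph (recorded in Section 2 of the paper), or alternatively a Cauchy--Schwarz bound along a connecting path, since an infimum of positive quantities need not be positive in general.
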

\begin{proof}
 Let $A, B \subseteq \overline{\Omega}$ be finite subsets such that
 \[
  \widetilde{\alpha}_{\widetilde{\mathcal{L}}}^{(i+1)} = \frac{\mathrm{Cap}_{\Omega}(A, B)}{m^{(i+1)}(A) \wedge m^{(i+1)}(B)}.
 \] 
 Since $m^{(i+1)} \leq m^{(i)}$, we have
 \[
  m^{(i)}(A) \wedge m^{(i)}(B) \geq m^{(i+1)}(A) \wedge m^{(i+1)}(B).
 \]
 It follows that
 \[
  \widetilde{\alpha}_{\widetilde{\mathcal{L}}}^{(i+1)} = \frac{\mathrm{Cap}_{\Omega}(A, B)}{m^{(i+1)}(A) \wedge m^{(i+1)}(B)} \geq \frac{\mathrm{Cap}_{\Omega}(A, B)}{m^{(i)}(A) \wedge m^{(i)}(B)} \geq \widetilde{\alpha}_{\widetilde{\mathcal{L}}}^{(i)}.
 \]
 Moreover, 
 \[
  \widetilde{\alpha}_{\widetilde{\mathcal{L}}}^{(i)} \geq \widetilde{\alpha}_{\widetilde{\mathcal{L}}} > 0.
 \]
 On the other hand, since $|\Omega| \geq 2$, we can take $x_1, x_2 \in \Omega$. Then for any $k \in \mathbb{N}_+$,
 \[
  \widetilde{\alpha}_{\widetilde{\mathcal{L}}}^{(k)} \leq \frac{\mathrm{Cap}_{\Omega}(x_1, x_2)}{m^{(k)}(x_1) \wedge m^{(k)}(x_2)} = \frac{\mathrm{Cap}_{\Omega}(x_1, x_2)}{m(x_1) \wedge m(x_2)} =: C.
 \]
 This proves the lemma.
\end{proof}
\begin{proof}[Proof of Theorem \ref{CheegerInequalityForNeumann}]
 By Lemma \ref{MonotoneOfCapacity}, we know that $\lim_{k \to \infty}\widetilde{\alpha}_{\widetilde{\mathcal{L}}}^{(k)}$ is well-defined. Hence,
 \[
  \frac18\lim_{k \to \infty}\widetilde{\alpha}_{\widetilde{\mathcal{L}}}^{(k)} \leq \lambda_1^N(\Omega) \leq 2\lim_{k \to \infty}\widetilde{\alpha}_{\widetilde{\mathcal{L}}}^{(k)}.
 \]
 We only need to prove that 
 \[
  \alpha_N(\Omega) = \lim_{k \to \infty}\widetilde{\alpha}_{\widetilde{\mathcal{L}}}^{(k)}.
 \]
 Recall that 
 \[
  \alpha_N(\Omega) = \inf_{A, B \subseteq \Omega}\frac{\mathrm{Cap}_{\Omega}(A, B)}{m(A) \wedge m(B)}.
 \]
 For any $A, B \subseteq \Omega$ and $k \in \mathbb{N}_+$,
 \[
  \frac{\mathrm{Cap}_{\Omega}(A, B)}{m(A) \wedge m(B)} = \frac{\mathrm{Cap}_{\Omega}(A, B)}{m^{(k)}(A) \wedge m^{(k)}(B)} \geq \widetilde{\alpha}_{\widetilde{\mathcal{L}}}^{(k)},
 \]
 and thus,
 \[
  \alpha_N(\Omega) \geq \lim_{k \to \infty}\widetilde{\alpha}_{\widetilde{\mathcal{L}}}^{(k)}.
 \]
 On the other hand, let $A^{(k)}, B^{(k)} \subseteq \overline{\Omega}$ be finite subsets such that
 \[
  \widetilde{\alpha}_{\widetilde{\mathcal{L}}}^{(k)} = \frac{\mathrm{Cap}_{\Omega}(A^{(k)}, B^{(k)})}{m^{(k)}(A^{(k)}) \wedge m^{(k)}(B^{(k)})}.
 \]
 Since $\overline{\Omega}$ is finite, we can choose a subsequence of $A^{(k)}, B^{(k)}$, still denoted by $A^{(k)}$ and $B^{(k)}$, such that
 \[
  A^{(k)} = A \subseteq \overline{\Omega}, \quad B^{(k)} = B \subseteq \overline{\Omega}.
 \]
 Hence,
 \[
  \widetilde{\alpha}_{\widetilde{\mathcal{L}}}^{(k)} = \frac{\mathrm{Cap}_{\Omega}(A, B)}{m^{(k)}(A) \wedge m^{(k)}(B)} \geq \frac{\mathrm{Cap}_{\Omega}(A \cap \Omega, B \cap \Omega)}{m^{(k)}(A) \wedge m^{(k)}(B)}.
 \]
 It follows that 
 \begin{align*}
  \lim_{k \to \infty}\widetilde{\alpha}_{\widetilde{\mathcal{L}}}^{(k)} &\geq \lim_{k \to \infty}\frac{\mathrm{Cap}_{\Omega}(A \cap \Omega, B \cap \Omega)}{m^{(k)}(A) \wedge m^{(k)}(B)} \\
  &=\frac{\mathrm{Cap}_{\Omega}(A \cap \Omega, B \cap \Omega)}{m(A \cap \Omega) \wedge m(B \cap \Omega)}  \geq \alpha_N(\Omega).
 \end{align*}
 This finishes the proof.
\end{proof}

\section{First Steklov Eigenvalue}

Let $G = (V, E, w, m)$ be a weighted graph, $\Omega \subseteq V$ be a finite subset whose vertex boundary $\delta \Omega$ satisfies $|\delta \Omega| \geq 2$. Let $\sigma_1(\Omega)$ be the first non-trivial Steklov eigenvalue on $\Omega$. By the Rayleigh quotient characterization, 
\[
 \sigma_1(\Omega) = \inf_{\substack{f \in \mathbb{R}^{\delta \Omega}, f \not\equiv 0 \\ \langle f, 1 \rangle_{\delta \Omega} = 0}}\frac{\langle \Lambda_{\Omega}f, f \rangle_{\delta \Omega}}{\|f\|_{l^2(\delta \Omega)}^2} = \inf_{\substack{f \in \mathbb{R}^{\delta \Omega}, f \not\equiv 0 \\ \langle f, 1 \rangle_{\delta \Omega} = 0}}\frac{\mathcal{E}_{\Omega}(u_f, u_f)}{\|f\|_{l^2(\delta \Omega)}^2},
\]
where $u_f$ is the harmonic extension of $f$ to $\Omega$. In order to prove Theorem \ref{CheegerInequalityForSteklov}, we construct a sequence of new finite graphs $\{G^{(k)}\}_{k=1}^{\infty}$ with $G^{(k)} = (\overline{\Omega}, E(\Omega, \overline{\Omega}), w, m^{(k)})$, where $m^{(k)}$ satisfies $m^{(k)}|_{\delta\Omega} = m$ and $m^{(k)}|_{\Omega} = \frac{m}{k}$. We denote by $\lambda_1^{(k)}$ the first non-trivial eigenvalue of the Laplacian on $G^{(k)}$. Then by \cite[Proposition 3]{HM2020}, we have
\begin{equation}\label{LimitEigenvalue}
 \lim\limits_{k \to \infty} \lambda_1^{(k)} = \sigma_1(\Omega).
\end{equation}
Moreover, by \eqref{CheegerEstimate}, for any $k \in \mathbb{N}_+$, we have
\begin{equation}\label{NeedForTheorem}
 \frac18\widetilde{\alpha}_{\widetilde{\mathcal{L}}}^{(k)} \leq \lambda_1^{(k)} \leq 2\widetilde{\alpha}_{\widetilde{\mathcal{L}}}^{(k)},
\end{equation}
where 
\[
 \widetilde{\alpha}_{\widetilde{\mathcal{L}}}^{(k)} = \inf_{A, B \subseteq \overline{\Omega}}\frac{\mathrm{Cap}_{\Omega}(A, B)}{m^{(k)}(A) \wedge m^{(k)}(B)}.
\]
A proof similar to the proof of Lemma \ref{MonotoneOfCapacity} shows that $\lim_{k\to\infty}\widetilde{\alpha}_{\widetilde{\mathcal{L}}}^{(k)}$ is well-defined. 
\begin{proof}[Proof of Theorem \ref{CheegerInequalityForSteklov}]
 The proof is the same as that of Theorem \ref{CheegerInequalityForNeumann} by switching the roles of $\Omega$ and $\delta \Omega$. Due to \eqref{LimitEigenvalue} and \eqref{NeedForTheorem}, we have
 \[
  \frac18\lim_{k \to \infty}\widetilde{\alpha}_{\widetilde{\mathcal{L}}}^{(k)} \leq \sigma_1(\Omega) \leq 2\lim_{k \to \infty}\widetilde{\alpha}_{\widetilde{\mathcal{L}}}^{(k)}.
 \]
 A same argument as in Theorem \ref{CheegerInequalityForNeumann} yields 
 \[
  \lim_{k \to \infty}\widetilde{\alpha}_{\widetilde{\mathcal{L}}}^{(k)} = \alpha_S(\Omega).
 \]
 This proves the result.
\end{proof}

\begin{example}\label{Line}
 Let $G = (\mathbb{Z}, E)$ be $1$-dimensional lattice graph with unit vertex weights, $\Omega = \{1, 2, \cdots, n-1\}$.
 \begin{figure}[H]
  \centering
   \begin{tikzpicture}
    \filldraw[draw = black, fill = black] (0,0) circle(0.05) (1,0) circle(0.05) (2,0) circle(0.05) (3.5,0) circle(0.05) (4.5,0) circle(0.05) (5.5,0)circle(0.05);
    \draw (0,0) -- (2.2,0); \draw (3.3,0) -- (5.5,0);
    \draw[densely dashed] (2.3,0) -- (3.2,0);
    \node at (0,-0.3) {$0$}; \node at (1,-0.3) {$1$}; \node at (2,-0.3) {$2$}; \node at (3.5,-0.3) {$n-2$}; \node at (4.5,-0.3) {$n-1$}; \node at (5.5, -0.3) {$n$};
    \draw[densely dashed] (0.5,0.5) -- (5,0.5) -- (5,-0.5) -- (0.5, -0.5) -- (0.5,0.5);
    \node at (3, 0.3) {$\Omega$};
   \end{tikzpicture}
  \caption{}
 \end{figure}
 Choose $A = \{0\}$, $B = \{n\}$. Then $\mathrm{Cap}_{\Omega}(A, B) = \frac{1}{n}$. Thus, by Theorem \ref{CheegerInequalityForSteklov}, we have
 \[
  \frac{1}{8n} \leq \sigma_1(\Omega) \leq \frac{2}{n}.
 \]
 In fact, by direct calculation, we have $\sigma_1(\Omega) = \frac{2}{n}$. This example shows that the upper bound estimate in Theorem \ref{CheegerInequalityForSteklov} is sharp. 
\end{example}

\begin{example}\label{FiniteTree}
 Consider the following finite tree $T_3$ with unit edge weights and vertex weights, $\Omega = \{x_1, x_2, x_3, x_4\}$.
 \begin{figure}[H]
 \centering
  \begin{tikzpicture}
   \filldraw[draw = black, fill = black] (0,0) circle(0.05) (1,0) circle(0.05) (1.8,0.5) circle(0.05) (1.8,-0.5) circle(0.05) (2.75,0.2) circle(0.05) (2.75,0.8) circle(0.05);
   \filldraw[draw = black, fill = black] (2.75,-0.2) circle(0.05) (2.75,-0.8) circle(0.05) (-0.8,0.5) circle(0.05) (-0.8,-0.5) circle(0.05);
   \draw (-0.8,0.5) -- (0,0) -- (1,0) -- (1.8,0.5) -- (2.75,0.8);
   \draw (-0.8,-0.5) -- (0,0); \draw (1.8,0.5) -- (2.75,0.2); \draw(1,0) -- (1.8,-0.5) -- (2.75,-0.8); \draw (1.8,-0.5) -- (2.75,-0.2);
   \node at (1,0.3) {$x_1$}; \node at (0,0.3) {$x_2$}; \node at (1.6,0.7) {$x_3$}; \node at (1.6,-0.7) {$x_4$}; \node at (-1.1, 0.5) {$x_5$};
   \node at (-1.1,-0.5) {$x_6$}; \node at (3.05,0.8) {$x_7$}; \node at (3.05,0.2) {$x_8$}; \node at (3.05,-0.2) {$x_9$}; \node at (3.1,-0.8) {$x_{10}$};
  \end{tikzpicture}
  \caption{}
\end{figure}
 Choose $A = \{x_5, x_6\}$ and $B = \{x_7, x_8\}$. Then one gets $\mathrm{Cap}_{\Omega}(A, B) = \frac{1}{3}$. It follows that $\alpha_S(\Omega) = \frac16$. Thus,
 \[
  \frac{1}{48} \leq \sigma_1(\Omega) \leq \frac13.
 \]
 One can actually calculate that $\sigma_1(\Omega) = \frac13$. Our upper bound estimate is sharp on $T_3$. 
\end{example}

We give a necessary and sufficient condition for the equality case, $\sigma_1(\Omega) = 2\alpha_S(\Omega)$.
\begin{proposition}\label{EqualityCase}
 Let $G$ be a weighted graph, $\Omega \subseteq V$ be a finite subset with $|\delta \Omega| \geq 2$. Then $\sigma_1(\Omega) = 2\alpha_S(\Omega)$ if and only if there exists an eigenfunction $f$ corresponding to $\sigma_1(\Omega)$ such that for any $x \in \delta \Omega$, $f(x) \in \{-1, 1, 0\}$.
\end{proposition}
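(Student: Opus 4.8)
The plan is to prove both implications by examining the chain of inequalities that yields the upper bound $\sigma_1(\Omega)\le 2\alpha_S(\Omega)$ in Theorem~\ref{CheegerInequalityForSteklov} and reading off exactly when each link is tight. I would work directly with the Steklov Rayleigh quotient
$\sigma_1(\Omega)=\inf\{\mathcal{E}_{\Omega}(u_f,u_f)/\|f\|_{l^2(\delta\Omega)}^2: \langle f,1\rangle_{\delta\Omega}=0,\ f\not\equiv 0\}$,
rather than passing through the auxiliary graphs $G^{(k)}$, since the statement concerns a single fixed $\Omega$.

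For the implication $(\Leftarrow)$, suppose $f$ is a $\sigma_1(\Omega)$-eigenfunction with $f|_{\delta\Omega}\in\{-1,0,1\}$, and set $A=\{x\in\delta\Omega:f(x)=1\}$, $B=\{x\in\delta\Omega:f(x)=-1\}$. The key observation is that the mean-zero condition $\langle f,1\rangle_{\delta\Omega}=0$ reads $m(A)-m(B)=0$, so $m(A)=m(B)=:\mu$, and $A,B$ are nonempty because $f\not\equiv 0$. Since $f$ is harmonic on $\Omega$ we have $u_f=f$, hence $\mathcal{E}_{\Omega}(f,f)=\sigma_1(\Omega)\|f\|_{l^2(\delta\Omega)}^2=2\mu\,\sigma_1(\Omega)$, while $\|f\|_{l^2(\delta\Omega)}^2=m(A)+m(B)=2\mu$. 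The rescaled function $\tfrac{f+1}{2}$ equals $1$ on $A$ and $0$ on $B$, so it is admissible for $\mathrm{Cap}_{\Omega}(A,B)$ and gives $\mathrm{Cap}_{\Omega}(A,B)\le\tfrac14\mathcal{E}_{\Omega}(f,f)=\tfrac12\mu\,\sigma_1(\Omega)$. Dividing by $m(A)\wedge m(B)=\mu$ yields $\alpha_S(\Omega)\le\tfrac12\sigma_1(\Omega)$, and combined with Theorem~\ref{CheegerInequalityForSteklov} this forces $\sigma_1(\Omega)=2\alpha_S(\Omega)$.

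For the implication $(\Rightarrow)$, I would pick $A,B\subseteq\delta\Omega$ realizing $\alpha_S(\Omega)$ (disjoint and nonempty, the infimum being attained since $\delta\Omega$ is finite) and let $h$ be the capacity potential with $h|_A=1$, $h|_B=0$, $\mathcal{E}_{\Omega}(h,h)=\mathrm{Cap}_{\Omega}(A,B)$. As $A,B\subseteq\delta\Omega$ we have $\mathring{\delta}A,\mathring{\delta}B\subseteq\delta\Omega$, so $h$ is harmonic on all of $\Omega$ and $u_{h|_{\delta\Omega}}=h|_{\Omega}$. Letting $c^{\ast}$ be the weighted mean of $h|_{\delta\Omega}$ and $g=h|_{\delta\Omega}-c^{\ast}$ (mean-zero, with $\mathcal{E}_{\Omega}(u_g,u_g)=\mathrm{Cap}_{\Omega}(A,B)$), the Rayleigh quotient together with the elementary bounds $\min_{c}\|h|_{\delta\Omega}-c\|_{l^2(\delta\Omega)}^2\ge\min_{c}[(1-c)^2m(A)+c^2m(B)]=\tfrac{m(A)m(B)}{m(A)+m(B)}\ge\tfrac12(m(A)\wedge m(B))$ gives
\[
 \sigma_1(\Omega)\le\frac{\mathrm{Cap}_{\Omega}(A,B)}{\min_{c}\|h|_{\delta\Omega}-c\|_{l^2(\delta\Omega)}^2}\le\frac{\mathrm{Cap}_{\Omega}(A,B)}{\frac{m(A)m(B)}{m(A)+m(B)}}\le\frac{\mathrm{Cap}_{\Omega}(A,B)}{\tfrac12(m(A)\wedge m(B))}=2\alpha_S(\Omega).
\]
If $\sigma_1(\Omega)=2\alpha_S(\Omega)$, then every inequality above is an equality.

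The substantive work is extracting the equality conditions, and this is where I expect the main obstacle. Equality in the last step forces $\tfrac{m(A)m(B)}{m(A)+m(B)}=\tfrac12(m(A)\wedge m(B))$, i.e. $m(A)=m(B)$, whereupon the minimizing constant of the quadratic is exactly $\tfrac12$. Equality in the middle step then forces both $c^{\ast}=\tfrac12$ and $\sum_{x\in\delta\Omega\setminus(A\cup B)}(h(x)-\tfrac12)^2 m(x)=0$, so (using $m>0$) $h\equiv\tfrac12$ on $\delta\Omega\setminus(A\cup B)$. Equality in the Rayleigh step says the mean-zero function $g$ attains $\sigma_1(\Omega)$, hence is a genuine first eigenfunction. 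Then $f:=2g=2h|_{\delta\Omega}-1$ is again a $\sigma_1(\Omega)$-eigenfunction, equal to $1$ on $A$, $-1$ on $B$, and $0$ on $\delta\Omega\setminus(A\cup B)$, so $f|_{\delta\Omega}\in\{-1,0,1\}$, as required. The delicate points to state carefully are that equality must propagate separately through each inequality (in particular that the ``unused'' boundary vertices contribute nothing to $\|h|_{\delta\Omega}-c^{\ast}\|_{l^2(\delta\Omega)}^2$), and that a mean-zero minimizer of the constrained Rayleigh quotient is indeed an eigenfunction.
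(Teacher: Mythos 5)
Your proposal is correct and follows essentially the same route as the paper: the ``if'' direction via the identity $\mathrm{Cap}_{\Omega}(A,B)=\tfrac14\inf\{\mathcal{E}_{\Omega}(g,g):g|_A=1,\ g|_B=-1\}$ (your $(f+1)/2$ rescaling) together with $m(A)=m(B)$ from the mean-zero condition, and the ``only if'' direction by plugging the capacity optimizer into the Rayleigh quotient and forcing equality through the same chain, concluding that the shifted potential $h-\tfrac12$ (the paper's $g-\tfrac12$) is a first eigenfunction with boundary values in $\{-\tfrac12,0,\tfrac12\}$. Your only deviations are cosmetic refinements of the paper's argument: you use the exact minimum $\tfrac{m(A)m(B)}{m(A)+m(B)}$ in place of the paper's two-step bound $(m(A)\wedge m(B))((1-c)^2+c^2)\ge\tfrac12(m(A)\wedge m(B))$, and you make explicit (which the paper leaves implicit) that $A,B\subseteq\delta\Omega$ forces the capacity optimizer to be harmonic on all of $\Omega$, so it coincides with the harmonic extension of its boundary values.
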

\begin{proof}
  For the ``if'' part. One easily checks that for any $X, Y \subseteq \overline{\Omega}$, 
 \[
  \mathrm{Cap}_{\Omega}(X, Y) = \frac14\inf\{\mathcal{E}_{\Omega}(g, g): g|_X = 1, g|_Y = -1\}.
 \]
 Let $f$ be such an eigenfunction, set
 \[
  A = \{x \in \delta \Omega: f(x) = 1\},
 \]
 and
 \[
  B = \{x \in \delta \Omega: f(x) = -1\}.
 \]
 Then we have
 \[
  \mathrm{Cap}_{\Omega}(A, B) \leq \frac14\mathcal{E}_{\Omega}(f, f).
 \]
 On the other hand, $\sum_{x \in \delta \Omega}f(x)m(x) = 0$ implies that $m(A) = m(B)$. Thus,
 \begin{align*}
  \alpha_S(\Omega) &\leq \frac{\mathrm{Cap}_{\Omega}(A, B)}{m(A) \wedge m(B)} \leq \frac{\frac14\mathcal{E}_{\Omega}(f, f)}{m(A) \wedge m(B)} \\
  &=\frac{\frac14\mathcal{E}_{\Omega}(f, f)}{\frac12(m(A)+m(B))} = \frac12\sigma_1(\Omega).
 \end{align*}
 
 For the ``only if'' part. Let $A, B \subseteq \delta \Omega$ be subsets such that 
 \[
  \alpha_S(\Omega) = \frac{\mathrm{Cap}_{\Omega}(A, B)}{m(A) \wedge m(B)}.
 \]
 Then there exists a function $g$ satisfying $g|_A = 1$ and $g|_B = 0$, such that $\mathcal{E}_{\Omega}(g, g) = \mathrm{Cap}_{\Omega}(A, B)$. Same as the proof of Theorem \ref{CheegerEstimate}, for any $c \in \mathbb{R}$, 
 \begin{align*}
  \|g-c\|_{l^2(\delta \Omega)}^2 &\geq \sum_{x \in A}(1-c)^2m(x) + \sum_{x \in B}c^2m(x) \\
  &\geq (m(A) \wedge m(B))((1-c)^2 + c^2) \\
  &\geq \frac12(m(A) \wedge m(B)).
 \end{align*} 
 Thus, 
 \[
  \min_{c \in \mathbb{R}}\|g-c\|_{l^2(\delta \Omega)}^2 \geq \frac12(m(A) \wedge m(B)).
 \]
 By the Rayleigh quotient characterization and $\sigma_1(\Omega) = 2\alpha_S(\Omega)$, we have
 \[
  \sigma_1(\Omega) \leq \frac{\mathcal{E}_{\Omega}(g, g)}{\min_{c \in \mathbb{R}}\|g-c\|_{l^2(\delta \Omega)}^2} \leq 2\alpha_S(\Omega) = \sigma_1(\Omega).
 \]
 This implies that $\min_{c \in \mathbb{R}}\|g-c\|_{l^2(\delta \Omega)}^2 = \|g- \frac12\|_{l^2(\delta \Omega)}^2$ and all inequalities above are equalities. It follows that $g-\frac12$ is a first eigenfunction and for any $x \in \delta \Omega$, $g-\frac12 \in \{-\frac12, 0, \frac12\}$.
 This proves the result.
\end{proof}

In the following, we estimate the bottom of the spectrum of the DtN operator on an infinite subgraph. We first recall the definition of the DtN operator on an infinite subgraph of a graph introduced in \cite{HHW2022}. 

Let $U \subseteq V$ be an infinite subset. We consider the graph structure $G_U$. Note that edges between vertices in $\delta U$ are removed. For any $f \in l_0(\delta U)$, we write $f = f^+ - f^-$, where $f^+:= \max\{f, 0\}$ and $f^-:= \max\{-f, 0\}$. For any $\mathcal{W}=\{W_i\}_{i=1}^{\infty} \uparrow \overline{U}$, w.l.o.g., we assume that $W_1 \cap \delta U \neq \varnothing$. Let $u^{W_i}_{f^+}$ be the solution of the following equation:
\begin{equation*}
 \begin{cases}
  \Delta u^{W_i}_{f^+}(x) = 0, \quad &x \in W_i \cap U, \\
  u^{W_i}_{f^+}(x) = f^+(x), &x \in W_i \cap \delta U,\\
  u^{W_i}_{f^+}(x) = 0, &x \in \overline{U} \setminus W_i,
 \end{cases}
\end{equation*}
and $u^{W_i}_{f^-}$ be the solution of the equation with $f^+$ replaced by $f^-$. Set 
\[
 u^{W_i}_f:= u^{W_i}_{f^+} - u^{W_i}_{f^-},
\]
and define 
\[
 u_f:= \lim_{i \to \infty}u^{W_i}_f.
\]
Note that $u_f$ exists by the maximum principle. For any $f \in l_0(\delta U)$, we define
\[
 \Lambda(f):= \frac{\partial u_f}{\partial n},
\]
where $\frac{\partial}{\partial n}$ is taken w.r.t. $U$. One can check that $\Lambda$ is a symmetric linear operator on $l_0(\delta U)$, see, e.g. \cite{HHW2022}. We define a form $Q: l_0(\delta U) \to [0, \infty)$ via
 \[
  Q(f) = \langle \Lambda(f), f \rangle_{\delta U} = \sum_{\{x, y\} \in E(U, \overline{U})}w(x, y)(u_f(y) - u_f(x))^2,
 \]
Denote by $D$ the closure of $l_0(\delta U)$ w.r.t. the norm $\|\cdot\| = \sqrt{\|\cdot\|^2_{l^2(\delta U)}+Q(\cdot)}$. Define 
\begin{equation*}
 \widetilde{Q}(f) = 
  \begin{cases}
   \sum_{\{x, y\} \in E(U, \overline{U})}w(x, y)(u_f(y) - u_f(x))^2, \quad &f \in D, \\
   +\infty, &f \notin D.
  \end{cases}
\end{equation*}
One easily sees that $\widetilde{Q}$ is lower continuous, i.e., $\widetilde{Q}$ is closed. On the other hand, for any $f \in \l_0(\delta U)$ and any map $T: \mathbb{R} \to \mathbb{R}$ satisfying $T(0) = 0$ and $|T(x)-T(y)| \leq |x-y|$, 
\begin{align*}
 \widetilde{Q}(T\circ f) &= \sum_{\{x, y\} \in E(U, \overline{U})}w(x, y)(u_{T\circ f}(y) - u_{T\circ f}(x))^2 \\
 &\leq \sum_{\{x, y\} \in E(U, \overline{U})}w(x, y)(T\circ u_f(y) - T \circ u_f(x))^2 \\
 &\leq \sum_{\{x, y\} \in E(U, \overline{U})}w(x, y)(u_f(y)-u_f(x))^2 \\
 &=\widetilde{Q}(f).
\end{align*}
It follows that $\widetilde{Q}(T \circ f) \leq \widetilde{Q}(f)$ for any $f \in D$. Thus, $\widetilde{Q}$ is a Dirichlet form. According to \cite[Theorem 1.2.1]{EBDavies1990}, there exists a unique self-adjoint operator, still denoted by $\Lambda$, such that
\[
 D = \text{Domain of definition of } \Lambda^{\frac12}
\]
and for $f \in D$,
\[
 \widetilde{Q}(f) = \langle \Lambda^{\frac12}f, \Lambda^{\frac12}f \rangle_{\delta U}.
\]
We call $\Lambda$ the DtN operator on $U$.

This defines the DtN operator on infinite subsets with arbitrary vertex weights. Note that the original definition in \cite{HHW2022} assumes some conditions on vertex weights to guarantee the boundedness of the operator. 

By the standard spectral theory, see e.g., \cite{EBDavis1995}, the $k$-th variational eigenvalue of the DtN operator $\Lambda$ on $U$ is defined as
\begin{equation}\label{DefinitionOfSigmaK}
 \sigma_k(U):= \inf_{\substack{H \subseteq l_0(\delta U) \\ \dim H = k}}\sup_{0 \neq f \in H}\frac{\langle \Lambda(f), f \rangle_{\delta U}}{\langle f, f \rangle_{\delta U}}.
\end{equation}

For $k =1$, $\sigma_1(U)$ is called the bottom of the spectral of $\Lambda$. Our aim is to estimate $\sigma_1(U)$. For this purpose, let $W \subseteq \overline{U}$ be a finite subset satisfying $W \cap \delta U \neq \varnothing$. We first estimate the eigenvalue of the DtN operator on $W$, denoted by $\Lambda_{W}$, which is defined as
\begin{align*}
 \Lambda_{W}: \mathbb{R}^{W \cap \delta U} &\to \mathbb{R}^{W \cap \delta U}, \\
 f &\mapsto \Lambda_{W}(f):= \frac{\partial u_f^{W}}{\partial n},
\end{align*}
where $\frac{\partial}{\partial n}$ is taken w.r.t. $U$. We denote by $\sigma_k^D(W)$, $1 \leq k \leq |W \cap \delta U|$, the $k$-th eigenvalue of the operator $\Lambda_{W}$. Note that $\sigma_k^D(W)$ can be characterized as 
\begin{equation}\label{DefinitionOfSigmaKOnWi}
 \sigma_k^D(W):= \min_{\substack{H \subseteq \mathbb{R}^{W \cap \delta U} \\ \dim H = k}}\max_{0 \neq f \in H}\frac{\langle \Lambda_{W}(f), f \rangle_{W \cap \delta U}}{\langle f, f \rangle_{W \cap \delta U}}.
\end{equation}
\begin{lemma}\label{CheegerInequalityForDS}
 For any finite $W \subseteq \overline{U}$ with $W \cap \delta U \neq \varnothing$, set 
 \[
  \alpha_{DS}^U(W) = \inf_{A \subseteq W \cap \delta U}\frac{\mathrm{Cap}_{U}(A, \delta_UW)}{m(A)}.
 \]
 Then
 \[
  \frac14\alpha_{DS}^U(W) \leq \sigma_1^D(W) \leq \alpha_{DS}^U(W).
 \]
\end{lemma}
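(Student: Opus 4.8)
The plan is to treat this lemma as the exact analogue of the first Dirichlet estimate (Theorem~\ref{CheegerInequalityForDirichlet}), with the role of the ``interior'' played by $W\cap\delta U$ (where the $l^2$-norm lives) and the role of the Dirichlet boundary played by $\delta_U W$ (where the extensions are forced to vanish). Just as in the definition of the form $Q$ preceding the lemma, Green's formula \eqref{GreenFormula} together with $u_f^W|_{W\cap\delta U}=f$ gives $\langle \Lambda_W f, f\rangle_{W\cap\delta U}=\mathcal{E}_U(u_f^W,u_f^W)$, so that \eqref{DefinitionOfSigmaKOnWi} with $k=1$ reads
\[
 \sigma_1^D(W)=\min_{0\neq f\in\mathbb{R}^{W\cap\delta U}}\frac{\mathcal{E}_U(u_f^W,u_f^W)}{\|f\|_{l^2(W\cap\delta U)}^2}.
\]

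For the upper bound I would pick $A\subseteq W\cap\delta U$ attaining $\alpha_{DS}^U(W)$ (a minimum exists since $W\cap\delta U$ is finite) together with the capacitor $h\in l_0(\overline U)$ realizing $\mathrm{Cap}_U(A,\delta_U W)$, so that $h|_A=1$, $h|_{\delta_U W}=0$ and $0\le h\le 1$. Using $f:=h|_{W\cap\delta U}$ as a test function, the truncation $\tilde h:=h\cdot\mathbf{1}_{W}$ is an admissible competitor for the energy-minimizing extension $u_f^W$, since it equals $f$ on $W\cap\delta U$ and vanishes on $\overline U\setminus W$; because $\delta_U W$ is the full vertex boundary of $W$ in $G_U$ and $h$ already vanishes there, the truncation does not increase energy, whence $\mathcal{E}_U(u_f^W,u_f^W)\le\mathcal{E}_U(\tilde h,\tilde h)\le\mathcal{E}_U(h,h)=\mathrm{Cap}_U(A,\delta_U W)$. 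Since $\|f\|_{l^2(W\cap\delta U)}^2\ge m(A)$, this gives $\sigma_1^D(W)\le\alpha_{DS}^U(W)$.

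For the lower bound I would first record the separating identity $\mathrm{Cap}(A,\overline U\setminus W)=\mathrm{Cap}_U(A,\delta_U W)$ for every finite $A\subseteq W$, where ``$\ge$'' is monotonicity and ``$\le$'' comes from the same truncation argument. Then I would take a first eigenfunction $\phi$ of $\Lambda_W$; as $\Lambda_W$ is symmetric, positive semidefinite, and has nonpositive off-diagonal entries (it records effective conductances between boundary vertices), a Perron--Frobenius argument for $cI-\Lambda_W$ lets me choose $\phi\ge 0$, so that $u:=u_\phi^W\ge0$ and $\{u\le 0\}=\overline U\setminus W$. Applying the co-area formula (Lemma~\ref{CoareaFormula}) to $u$ on the finite subgraph carried by $W\cup\delta_U W$, and using for each $t>0$ the chain
\[
 \mathrm{Cap}(\{u>t\},\overline U\setminus W)\ge\mathrm{Cap}(\{\phi>t\},\overline U\setminus W)=\mathrm{Cap}_U(\{\phi>t\},\delta_U W)\ge\alpha_{DS}^U(W)\,m(\{\phi>t\}),
\]
where the first step is monotonicity in the source set ($\{\phi>t\}\subseteq\{u>t\}$), the second is the identity above, and the last is the definition of $\alpha_{DS}^U(W)$ (valid because $\{\phi>t\}\subseteq W\cap\delta U$), I would integrate via $\int_0^\infty t\,m(\{\phi>t\})\,dt=\tfrac12\|\phi\|_{l^2(W\cap\delta U)}^2$ to obtain $2\mathcal{E}_U(u,u)\ge\tfrac12\alpha_{DS}^U(W)\|\phi\|_{l^2(W\cap\delta U)}^2$, i.e. $\sigma_1^D(W)\ge\tfrac14\alpha_{DS}^U(W)$.

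The two Rayleigh-quotient manipulations are routine; the hard part will be the lower bound, precisely because the harmonic extension $u$ is supported also on the interior $W\cap U$, so its superlevel sets $\{u>t\}$ contain vertices that never appear in an admissible $A\subseteq W\cap\delta U$. The resolution is to combine monotonicity of capacity in the source set with the separating identity $\mathrm{Cap}(\cdot,\overline U\setminus W)=\mathrm{Cap}_U(\cdot,\delta_U W)$: this lets me discard the interior contributions while grounding exactly on $\delta_U W$, so that the infimum defining $\alpha_{DS}^U(W)$ can be invoked. The secondary technical point is the sign choice $\phi\ge 0$, needed so that $\{u\le0\}=\overline U\setminus W$ and the co-area integral telescopes exactly as in the proof of Theorem~\ref{CheegerInequalityForDirichlet}.
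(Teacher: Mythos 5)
Your proof is correct, but it takes a genuinely different route from the paper's. The paper proves Lemma \ref{CheegerInequalityForDS} by the same weight-scaling scheme it uses for Theorems \ref{CheegerInequalityForNeumann} and \ref{CheegerInequalityForSteklov}: it introduces the graphs $G^{(k)}$ on $W\cup\delta_UW$ with interior weights $m^{(k)}|_{W\cap U}=\frac{m}{k}$, adapts \cite[Proposition 3]{HM2020} to get $\lambda_1^{(k)}(W)\to\sigma_1^D(W)$, applies Theorem \ref{CheegerInequalityForDirichlet} on each $G^{(k)}$ (which is where the constants $\frac14$ and $1$ come from), and then shows $\alpha_D^{(k)}(W)\to\alpha_{DS}^U(W)$ by the monotonicity argument of Lemma \ref{MonotoneOfCapacity} and the proof of Theorem \ref{CheegerInequalityForNeumann}. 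You instead argue directly at the level of the DtN form: Green's formula identifies $\langle\Lambda_W f,f\rangle_{W\cap\delta U}$ with $\mathcal{E}_U(u_f^W,u_f^W)$ (valid since $u_f^W$ is harmonic on $W\cap U$ and vanishes off $W$, so all boundary terms localize correctly), the equilibrium potential of an optimal $A$ plus the energy-decreasing truncation $h\mapsto h\cdot\mathbf{1}_W$ gives the upper bound, and the co-area formula (Lemma \ref{CoareaFormula}) applied to the nonnegative harmonic extension, combined with capacity monotonicity in both arguments and the truncation identity $\mathrm{Cap}(A,\overline U\setminus W)=\mathrm{Cap}_U(A,\delta_UW)$, gives the lower bound. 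Your route is self-contained: it avoids the limit $k\to\infty$ and the eigenvalue-convergence input entirely, and it makes transparent why the Dirichlet constants $\frac14$, $1$ survive for the mixed Dirichlet--Steklov problem; the paper's route buys uniformity, since one scaling mechanism and one set of convergence lemmas serve the Neumann, Steklov, and higher-order statements alike, and it never needs sign information about eigenfunctions of $\Lambda_W$, which is exactly the point you had to handle by hand. Two micro-remarks on that handling: your equality $\{u\le 0\}=\overline U\setminus W$ need not be exact ($u$ may also vanish at vertices of $W$), but your displayed chain only uses the inclusion $\delta_UW\subseteq\{u\le 0\}$ together with monotonicity in the grounding set, so nothing breaks; and the Perron--Frobenius step, while correct ($\Lambda_W$ is a Schur complement of an M-matrix, hence has nonpositive off-diagonal entries), can be replaced by the more elementary observation that passing from $\phi$ to $|\phi|$ does not increase the Rayleigh quotient, because $|u_\phi^W|$ is an admissible competitor for the boundary data $|\phi|$ and $\mathcal{E}_U(|v|,|v|)\le\mathcal{E}_U(v,v)$.
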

\begin{proof}
 We construct a sequence of finite graphs $\{G^{(k)}\}_{k=1}^{\infty}$ where $$G^{(k)} = (W \cup \delta_U W, E(W, W \cup \delta_U W), w, m^{(k)})$$ with $m^{(k)}|_{W \cap \delta U} = m$ and $m^{(k)}|_{W \cap U} = \frac{m}{k}$. Recall that $\delta_UW$ is the vertex boundary of $W$ in $G_U$. Consider the following Dirichlet problem
 \begin{align*}
 \begin{cases}
  -\Delta^{(k)} f(x) = \lambda f(x), \quad &x \in W, \\
  f(x) = 0, &x \in \delta_U W,
 \end{cases}
 \end{align*}
 where $\Delta^{(k)}$ is the Laplacian on $G^{(k)}$. Denote by $\lambda_1^{(k)}(W)$ the first eigenvalue of the above Dirichlet problem. The proof of the Proposition 3 in \cite{HM2020} can easily be adapted to show that 
 \[
  \lim_{k \to \infty}\lambda^{(k)}_1(W) = \sigma_1^D(W).
 \]
 By applying Theorem \ref{CheegerInequalityForDirichlet}, we have
 \[
  \frac14\alpha_D^{(k)}(W) \leq \lambda^{(k)}_1(W) \leq \alpha_D^{(k)}(W),
 \]
 where 
 \[
  \alpha_D^{(k)}(W) = \inf_{A \subseteq W}\frac{\mathrm{Cap}_{U}(A, \delta_UW)}{m^{(k)}(A)}.
 \]
 Following the proof of Theorem \ref{CheegerInequalityForNeumann}, we get $$\lim_{k \to \infty}\alpha_D^{(k)}(W) = \alpha_{DS}^U(W).$$
 This proves the lemma.
\end{proof}

For any $A \subseteq \delta U$ with $|A| < +\infty$, set
\[
 \alpha_S(U) = \inf_{\substack{A \subseteq \delta U \\ |A|<+\infty}}\frac{\mathrm{Cap}^U(A)}{m(A)}.
\]
For an exhaustion $\mathcal{W}=\{W_i\}_{i=1}^{\infty}$ of $\overline{U}$, there exists $i \in \mathbb{N}_+$ such that $A \subseteq W_i \cap \delta U$. One easily checks that 
\[
 \frac{\mathrm{Cap}_U(A, \delta_U W_i)}{m(A)} \geq \frac{\mathrm{Cap}_U(A, \delta_U W_{i+1})}{m(A)},
\]
which means that $\alpha_{DS}^U(W_i) \geq \alpha_{DS}^U(W_{i+1})$. Hence, $\lim_{i \to \infty}\alpha_{DS}^U(W_i)$ exists. 
\begin{lemma}\label{LimitDSConstant}
Let $G$ be an infinite weighted graph, $U \subseteq V$ be an infinite subset. Let $\mathcal{W} = \{W_i\}_{i=1}^{\infty} \uparrow \overline{U}$ be an exhaustion, then
 $$\lim_{i \to \infty}\alpha_{DS}^U(W_i) = \alpha_S(U).$$
\end{lemma}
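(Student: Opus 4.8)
The plan is to mirror the proof of Theorem \ref{CheegerInequalityForBottom}, establishing the two inequalities $\lim_{i\to\infty}\alpha_{DS}^U(W_i) \geq \alpha_S(U)$ and $\lim_{i\to\infty}\alpha_{DS}^U(W_i) \leq \alpha_S(U)$ separately. The existence of the limit $L := \lim_{i\to\infty}\alpha_{DS}^U(W_i)$ is already guaranteed by the monotonicity $\alpha_{DS}^U(W_i) \geq \alpha_{DS}^U(W_{i+1})$ noted just before the statement, so it suffices to pin down its value.

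For the lower bound $L \geq \alpha_S(U)$, I would fix $i$ and exploit that $W_i \cap \delta U$ is finite, so the infimum defining $\alpha_{DS}^U(W_i)$ is attained over the finitely many non-empty subsets of $W_i \cap \delta U$; let $A_i \subseteq W_i \cap \delta U$ be a minimizer, so that $\alpha_{DS}^U(W_i) = \mathrm{Cap}_U(A_i, \delta_U W_i)/m(A_i)$. The key point is the capacity comparison $\mathrm{Cap}_U(A_i, \delta_U W_i) \geq \mathrm{Cap}^U(A_i)$: for this fixed $i$ and every $j \geq i$ one has $\mathrm{Cap}_U(A_i, \delta_U W_i) \geq \mathrm{Cap}_U(A_i, \delta_U W_j)$ by monotonicity of capacity along the exhaustion, and letting $j \to \infty$ while invoking $\mathrm{Cap}^U(A_i) = \lim_{j\to\infty}\mathrm{Cap}_U(A_i, \delta_U W_j)$ yields the claim. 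Since $A_i$ is a finite subset of $\delta U$, it is admissible in the infimum defining $\alpha_S(U)$, whence $\alpha_{DS}^U(W_i) \geq \mathrm{Cap}^U(A_i)/m(A_i) \geq \alpha_S(U)$, and passing to the limit gives $L \geq \alpha_S(U)$.

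For the upper bound $L \leq \alpha_S(U)$, I would start from an arbitrary finite $A \subseteq \delta U$. Because $\mathcal{W}$ exhausts $\overline{U}$ and $A$ is finite, there is some $i_0$ with $A \subseteq W_{i_0} \cap \delta U$, and for all $i \geq i_0$ the set $A$ is admissible in the infimum defining $\alpha_{DS}^U(W_i)$, giving $\alpha_{DS}^U(W_i) \leq \mathrm{Cap}_U(A, \delta_U W_i)/m(A)$. Letting $i \to \infty$ and again using $\mathrm{Cap}^U(A) = \lim_{i\to\infty}\mathrm{Cap}_U(A, \delta_U W_i)$ yields $L \leq \mathrm{Cap}^U(A)/m(A)$; taking the infimum over all finite $A \subseteq \delta U$ gives $L \leq \alpha_S(U)$. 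Combining the two inequalities proves the lemma.

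The only genuinely delicate point is the comparison $\mathrm{Cap}_U(A_i, \delta_U W_i) \geq \mathrm{Cap}^U(A_i)$ in the lower bound, where the minimizing set $A_i$ varies with $i$. The argument is nonetheless valid because for each fixed $i$ the set $A_i$ is held fixed while $j \to \infty$, so the monotonicity of $j \mapsto \mathrm{Cap}_U(A_i, \delta_U W_j)$ together with the limit definition of $\mathrm{Cap}^U$ applies directly; everything else is bookkeeping already carried out in the proof of Theorem \ref{CheegerInequalityForBottom}.
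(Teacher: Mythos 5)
Your proposal is correct and follows essentially the same route as the paper's proof: the upper bound $\lim_{i}\alpha_{DS}^U(W_i) \leq \alpha_S(U)$ via an arbitrary finite $A \subseteq \delta U$ absorbed into some $W_{i_0}$, and the lower bound via a minimizer $A_i \subseteq W_i \cap \delta U$ together with the monotonicity $\mathrm{Cap}_U(A_i, \delta_U W_i) \geq \mathrm{Cap}_U(A_i, \delta_U W_j)$ for $j \geq i$ and the limit characterization $\mathrm{Cap}^U(A_i) = \lim_{j\to\infty}\mathrm{Cap}_U(A_i, \delta_U W_j)$. Your explicit remark that the infimum defining $\alpha_{DS}^U(W_i)$ is attained because $W_i \cap \delta U$ has only finitely many subsets is a small point the paper leaves implicit, but otherwise the arguments coincide.
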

\begin{proof}
 For any $A \subseteq \delta U$ with $|A|<+\infty$, there exists $i \in \mathbb{N}_+$ such that $A \subseteq W_i \cap \delta U$. Thus, 
 \[
  \frac{\mathrm{Cap}^U(A)}{m(A)} = \lim_{i \to \infty}\frac{\mathrm{Cap}_{U}(A, \delta_U W_i)}{m(A)} \geq \lim_{i \to \infty}\alpha_{DS}^U(W_i).
 \]
 It follows that $\alpha_S(U) \geq \lim_{i \to \infty}\alpha_{DS}^U(W_i)$. On the other hand, let $A \subseteq W_i \cap \delta U$ be a subset such that
 \[
  \alpha_{DS}^U(W_i) = \frac{\mathrm{Cap}_{U}(A, \delta_U W_i)}{m(A)}.
 \]
 Then $A \subseteq \delta U$ and $|A|<+\infty$. Moreover, for any $j>i$,
 \[
  \alpha_{DS}^U(W_i) = \frac{\mathrm{Cap}_{U}(A, \delta_UW_i)}{m(A)} \geq \frac{\mathrm{Cap}_{U}(A, \delta_UW_j)}{m(A)},
 \]
 which means that 
 \[
  \alpha_{DS}^U(W_i) \geq \lim_{j \to \infty} \frac{\mathrm{Cap}_{U}(A, \delta_U W_j)}{m(A)} = \frac{\mathrm{Cap}^U(A)}{m(A)} \geq \alpha_S(U).
 \]
 Thus, $\lim_{i \to \infty}\alpha_{DS}^U(W_i) \geq \alpha_S(U)$. This proves the lemma.
\end{proof}
Now, we are ready to prove Theorem \ref{CheegerInequalityForDtNBottom}.
\begin{proof}[Proof of Theorem \ref{CheegerInequalityForDtNBottom}]
 By applying \cite[Proposition 1.5]{HHW2022}, we have 
 \[
  \lim_{i \to \infty}\sigma_1^D(W_i) = \sigma_1(U).
 \]
 Then the conclusion follows from Lemma \ref{CheegerInequalityForDS} and Lemma \ref{LimitDSConstant}.
\end{proof}

The following example shows that the upper bound estimate in Theorem \ref{CheegerInequalityForDtNBottom} is sharp.
\begin{example}\label{InfiniteTree}
 Consider the graph in Figure \ref{Example} with unit edge weights and vertex weights, which is an induced subgraph of $3$-regular tree. Let $U = \{x_2, x_3, \cdots\}$ and $\delta U = \{x_1\}$. Choose $W_i = \{x_1, x_2, \cdots, x_{2^i}\}$. Obviously, $\{W_i\}_{i=1}^{\infty}$ is an exhaustion of $\overline{U}$ and $\delta_U W_i = \{x_{2^i+1}, \cdots, x_{2^{i+1}}\}$. One can show that $\mathrm{Cap}_{U}(\{x_1\}, \delta_U W_i) = \frac{2^i}{2^{i+1}-1}$. Hence, by Theorem \ref{CheegerInequalityForDtNBottom},
 \[
  \frac18 \leq \sigma_1(U) \leq \frac12.
 \]
 On the other hand, by direct calculation, we have $\sigma_1(U) = \frac12$.
 \begin{figure}
  \centering
   \begin{tikzpicture}
    \filldraw[draw = black, fill = black] (0,0) circle(0.05) (1,0) circle(0.05) (1.3,0.95) circle(0.05) (1.3,-0.95) circle(0.05) (2.2,1.35) circle(0.05);
    \filldraw[draw = black, fill = black] (2.2,-1.35) circle(0.05) (2.2,0.55) circle(0.05) (2.2,-0.55) circle(0.05);
    \foreach \i in {1.55,1.15,0.75,0.35,-0.35,-0.75,-1.15,-1.55}
    \filldraw[draw = black, fill = black] (3.2, \i) circle(0.05);
    \draw (0,0) -- (1,0) -- (1.3,0.95) -- (2.2,1.35) -- (3.2,1.55); \draw (2.2, 0.55) -- (3.2,0.35);
    \draw (1,0) -- (1.3,-0.95) -- (2.2,-1.35) -- (3.2,-1.55); \draw (2.2,1.35) -- (3.2,1.15); \draw (2.2,-0.55) -- (3.2,-0.35);
    \draw (1.3,0.95) -- (2.2,0.55) -- (3.2,0.75); \draw (1.3,-0.95) -- (2.2,-0.55) -- (3.2,-0.75); \draw (2.2,-1.35) -- (3.2,-1.15);
    \draw[densely dashed] (3.2,1.55) -- (4.2,1.7); \draw[densely dashed] (3.2,1.55) -- (4.2,1.4); \draw[densely dashed] (3.2,1.15) -- (4.2,1.3);
    \draw[densely dashed] (3.2,1.15) -- (4.2,1); \draw[densely dashed] (3.2,0.75) -- (4.2,0.9); \draw[densely dashed] (3.2,0.75) -- (4.2,0.6);
    \draw[densely dashed] (3.2,0.35) -- (4.2,0.5); \draw[densely dashed] (3.2,0.35) -- (4.2,0.2);
    \draw[densely dashed] (3.2,-1.55) -- (4.2,-1.7); \draw[densely dashed] (3.2,-1.55) -- (4.2,-1.4); \draw[densely dashed] (3.2,-1.15) -- (4.2,-1.3);
    \draw[densely dashed] (3.2,-1.15) -- (4.2,-1); \draw[densely dashed] (3.2,-0.75) -- (4.2,-0.9); \draw[densely dashed] (3.2,-0.75) -- (4.2,-0.6);
    \draw[densely dashed] (3.2,-0.35) -- (4.2,-0.5); \draw[densely dashed] (3.2,-0.35) -- (4.2,-0.2);
    \node at (0,0.3) {$x_1$}; \node at (1.3,0) {$x_2$}; \node at (1.3,1.25) {$x_3$}; \node at (1.3,-1.25) {$x_4$};
    \node at (2.2,1.05) {$x_5$}; \node at (2.2,0.25) {$x_6$}; \node at (2.2,-0.85) {$x_7$}; \node at (2.2,-1.85) {$x_8$};
   \end{tikzpicture}
  \caption{}
  \label{Example}
 \end{figure}
\end{example}

At the end of this section, we give an example to calculate the bottom of the spectrum of the DtN operator using capacity.
\begin{example}\label{HalfOfZn}
 Consider the lattice graph $(\mathbb{Z}^N, E)$ for $N \geq 3$ with unit vertex weights.
 Let $$U = \{(x_1, \cdots, x_N): x_1, \cdots, x_{N-1} \in \mathbb{Z}, x_N \in \mathbb{Z}_+\},$$ then $$\delta U = \{(x_1, \cdots, x_{N-1}, 0): x_1, \cdots, x_{N-1} \in \mathbb{Z}\}.$$We use Theorem \ref{CheegerInequalityForDtNBottom} to show that $\sigma_1(U) = 0$. For $r \in \mathbb{Z}_+$, we write
 \[
  Q_r:= \{x \in \mathbb{Z}^N: \|x\|_{\infty} \leq r\},
 \]
 and 
 \[
  S_r:= \{x \in \mathbb{Z}^N: \|x\|_{\infty} = r\}.
 \]
 Fix $r_0 \in \mathbb{Z}_+$ and set
 \[
  A = Q_{r_0} \cap \delta U.
 \]
 We define $f \in \mathbb{R}^{\overline{U}}$ as
 \begin{equation*}
  f(x) = 
   \begin{cases}
    1, \quad &x \in Q_{r_0} \cap \overline{U}, \\
    \left(\frac{r_0}{r}\right)^{N-2}, &x \in S_r \cap \overline{U}, r > r_0.
   \end{cases}
 \end{equation*}
 Then $f|_A = 1$, and
 \begin{align*}
  \mathcal{E}_U(f, f) 
  &\leq C(N)\sum_{r \geq r_0}r^{N-1}\left(\left(\frac{r_0}{r}\right)^{N-2}-\left(\frac{r_0}{r+1}\right)^{N-2}\right)^2 \\
  &\leq C(N)r_0^{2N-4}\sum_{r \geq r_0}\frac{1}{r^{N-1}} \\
  &\leq C(N)r_0^{N-2},
 \end{align*}
 where $C(N)$ depends only on $N$. Note that 
 \[
  \mathrm{Cap}^U(A) = \inf\{\mathcal{E}_U(g, g): g \in l_0(\overline{U}), g|_A = 1\},
 \]
 and there exists a sequence of $\{f_i\}_{i=1}^{\infty} \in l_0(\overline{U})$ such that $\mathcal{E}_U(f_i, f_i) \to \mathcal{E}_U(f, f)$, see, e.g., \cite[Theorem 7]{hua2022extremal}, we have
 \[
  \frac{\mathrm{Cap}^U(A)}{m(A)} \leq \frac{\mathcal{E}_U(f,f)}{m(A)} \leq \frac{C(N)}{r_0}.
 \]
 By passing to the limit $r_0 \to +\infty$, we get 
 \[
  \alpha_S(U) = 0.
 \]
 By Theorem \ref{CheegerInequalityForDtNBottom}, we have
 \[
 \sigma_1(U) = 0.
 \]
\end{example}
\begin{remark}
 In \cite{HHW2022}, the first author, Huang and Wang proved that if $U \subseteq V$ is an infinite subset with $|\delta U| < +\infty$, then $\sigma_1(U) = 0$ if and only if $(\overline{U}, E(U, \overline{U}))$ is recurrent. Example \ref{HalfOfZn} shows that if $|\delta U|=\infty$, then $\sigma_1(U) = 0$ does not imply that $(\overline{U}, E(U, \overline{U}))$ is recurrent.
\end{remark}

\section{Higher-order Dirichlet eigenvalues and Steklov eigenvalues}

In this section, we estimate higher-order eigenvalues. Recall that for any subset $W \subseteq V$, $\mathcal{A}(W)$ is the collection of all non-empty finite subsets of $W$, and $\mathcal{A}_k(W)$ is the set of all disjoint $k$-tuples $(A_1, \cdots, A_k)$ such that $A_l \in \mathcal{A}(W)$, for all $l \in \{1, \cdots, k\}$.

We first establish estimates of higher-order Dirichlet eigenvalues on a finite subgraph. Let $G = (V, E, w,m)$ be a graph,  $W \subseteq V$ be a finite subset. Consider the following Dirichlet eigenvalue problem
\begin{equation*}
 \begin{cases}
  \mathcal{L}f(x) = \lambda f(x), \quad &x \in W, \\
  f(x) =0, &x \in \delta W.
 \end{cases}
\end{equation*}
We denote by $\lambda_k^D(W)$ the $k$-th eigenvalue of the above eigenvalue problem. For any $k \in \{1, \cdots, |W|\}$, we define
\[
 \widetilde{\Gamma}_k(W):= \min_{(A_1, \cdots, A_k) \in \mathcal{A}_k(W)}\max_{l \in \{1, \cdots, k\}}\lambda_1^D(A_l).
\]
By \cite[Theorem 5.11]{HHW2022}, there exists a universal constant $c>0$ such that
\[
 \lambda_k^D(W) \geq \frac{c}{k^6}\widetilde{\Gamma}_k(W).
\]
On the other hand, for any $(A_1, \cdots, A_k) \in \mathcal{A}_k(W)$, let $f_i$ be a first eigenfunction on $A_i$, $i = 1, \cdots, k$. Consider $H:=\mathrm{span}\{f_i: i = 1, \cdots, k\}$. Then $\dim H = k$ and
\[
 \max_{0 \neq f \in H}\frac{\mathcal{E}_W(f, f)}{\|f\|_{l^2(W)}^2} \leq 2\max_{l \in \{1, \cdots, k\}}\lambda_1^D(A_l).
\]
It follows that
\[
 \lambda_k^D(W) \leq 2\widetilde{\Gamma}_k(W).
\]
Hence, we get the following useful lemma.
\begin{lemma}\label{HighOrderDirichlet}
 The $k$-th Dirichlet eigenvalue satisfies
 \[
  \frac{c}{k^6}\widetilde{\Gamma}_k(W) \leq \lambda_k^D(W) \leq 2\widetilde{\Gamma}_k(W).
 \]
\end{lemma}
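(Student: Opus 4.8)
The plan is to prove the two inequalities by entirely different means: the upper bound $\lambda_k^D(W) \leq 2\widetilde{\Gamma}_k(W)$ is elementary and follows from a disjoint-support test-function construction, while the lower bound $\frac{c}{k^6}\widetilde{\Gamma}_k(W) \leq \lambda_k^D(W)$ is the deep direction and will be inherited from a higher-order Cheeger-type inequality already available in the literature.

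For the upper bound I would fix a $k$-tuple $(A_1, \dots, A_k) \in \mathcal{A}_k(W)$ attaining $\widetilde{\Gamma}_k(W)$ and, for each $l$, let $f_l$ be a first Dirichlet eigenfunction on $A_l$, extended by zero to $\overline{W}$; thus $f_l$ is supported on $A_l$ and satisfies $\mathcal{E}_W(f_l, f_l) = \lambda_1^D(A_l)\|f_l\|_{l^2(A_l)}^2$. Since the blocks $A_l$ are pairwise disjoint, the $f_l$ have disjoint supports, so they are linearly independent and $H := \mathrm{span}\{f_1, \dots, f_k\}$ has dimension $k$; moreover every $f \in H$ vanishes on $\delta W$, making $H$ admissible in the Courant--Fischer characterization of $\lambda_k^D(W)$. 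For $f = \sum_l c_l f_l$, disjointness gives $\|f\|_{l^2(W)}^2 = \sum_l c_l^2 \|f_l\|_{l^2(A_l)}^2$. The one calculation that needs care is the energy: I would classify each edge of $G_W$ as lying inside a single block, running from a block to a vertex in none of the blocks, or joining two distinct blocks $A_i, A_j$; on the first two types the contribution to $\mathcal{E}_W(f,f)$ equals $c_l^2$ times the corresponding contribution to $\mathcal{E}_W(f_l, f_l)$, while on the last type the inequality $(c_i a - c_j b)^2 \leq 2c_i^2 a^2 + 2c_j^2 b^2$ distributes the contribution across $\mathcal{E}_W(f_i, f_i)$ and $\mathcal{E}_W(f_j, f_j)$. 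Summing yields $\mathcal{E}_W(f,f) \leq 2\sum_l c_l^2 \mathcal{E}_W(f_l, f_l) = 2\sum_l c_l^2 \lambda_1^D(A_l)\|f_l\|_{l^2(A_l)}^2$, hence $\mathcal{E}_W(f,f)/\|f\|_{l^2(W)}^2 \leq 2\max_l \lambda_1^D(A_l)$ for every $f \in H$. The min-max principle then gives $\lambda_k^D(W) \leq 2\max_l \lambda_1^D(A_l) = 2\widetilde{\Gamma}_k(W)$ after optimizing over the $k$-tuple.

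For the lower bound I would invoke \cite[Theorem 5.11]{HHW2022} directly, which asserts exactly that $\lambda_k^D(W) \geq \frac{c}{k^6}\widetilde{\Gamma}_k(W)$ for a universal constant $c>0$. This is the main obstacle of the statement: unlike the upper bound it cannot be obtained from a single test space, and reproving it would require a multiway spectral partitioning argument in the spirit of \cite{LGT2014}, producing $k$ disjointly supported localized functions out of the first $k$ Dirichlet eigenfunctions while controlling their individual Rayleigh quotients---precisely the content packaged in the cited theorem. Combining the two bounds completes the proof.
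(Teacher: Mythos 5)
Your proposal is correct and follows essentially the same route as the paper: the lower bound is obtained exactly as in the paper by invoking \cite[Theorem 5.11]{HHW2022}, and the upper bound uses the same $k$-dimensional test space spanned by the disjointly supported first Dirichlet eigenfunctions on $A_1,\dots,A_k$, with the factor $2$ coming from cross-block edges via $(a-b)^2 \leq 2a^2 + 2b^2$. Your edge-classification computation merely spells out the energy estimate $\max_{0 \neq f \in H}\mathcal{E}_W(f,f)/\|f\|_{l^2(W)}^2 \leq 2\max_l \lambda_1^D(A_l)$ that the paper asserts without detail, so the two arguments are essentially identical.
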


We consider higher-order eigenvalues on an infinite graph. Let $G = (V, E, w, m)$ be an infinite weighted graph, $\mathcal{W} = \{W_i\}_{i=1}^{\infty}$ be an exhaustion. Then for any $k \geq 1$, $\lambda_k(G) = \lim_{i \to \infty}\lambda_k^D(W_i)$, where $\lambda_k^D(W_i)$ is the $k$-th Dirichlet eigenvalue on $W_i$. For each $W_i$, set
\[
 \Gamma_k^D(W_i):= \min_{(A_1, \cdots, A_k) \in \mathcal{A}_k(W_i)}\max_{l \in \{1, \cdots, k\}}\alpha_D(A_l).
\]
By applying Theorem \ref{CheegerInequalityForDirichlet} and Lemma \ref{HighOrderDirichlet}, we have
\begin{equation}\label{CIFHODOF}
 \frac{c}{k^6}\Gamma_k^D(W_i) \leq \lambda_k^D(W_i) \leq 2\Gamma_k^D(W_i).
\end{equation}
\begin{proof}[Proof of Theorem \ref{HigherOrderDirichletEstimate}]
 One easily sees that $\Gamma_k^D(W_i) \geq \Gamma_k^D(W_{i+1})$. Thus, the limit $\lim_{i \to \infty}\Gamma_k^D(W_i)$ exists. We only need to prove that 
 \[
  \lim_{i \to \infty}\Gamma_k^D(W_i) = \Gamma_k^D(G).
 \]
 Recall that 
 \[
  \Gamma_k^D(G) = \inf_{(A_1, \cdots, A_k) \in \mathcal{A}_k(V)}\max_{l \in \{1, \cdots, k\}}\alpha_D(A_l).
 \]
 For any $(A_1, \cdots, A_k) \in \mathcal{A}_k(V)$, there exists $i \in \mathbb{N}_+$ such that $(A_1, \cdots, A_k) \in \mathcal{A}_k(W_i)$. It follows that 
 \[
  \max_{l \in \{1, \cdots, k\}}\alpha_D(A_l) \geq \Gamma_k^D(W_i).
 \]
 Thus,
 \[
  \Gamma_k^D(G) \geq \lim_{i \to \infty}\Gamma_k^D(W_i).
 \]
 On the other hand, let $(A_1, \cdots, A_k) \in \mathcal{A}_k(W_i)$ be a $k$-tuple such that
 \[
  \Gamma_k^D(W_i) = \max_{l \in \{1, \cdots, k\}}\alpha_D(A_l).
 \]
 Then $(A_1, \cdots, A_k) \in \mathcal{A}_k(V)$, and hence,
 \[
  \Gamma_k^D(W_i) \geq \Gamma_k^D(G).
 \]
 It follows that $\lim_{i \to \infty}\Gamma_k^D(W_i) \geq \Gamma_k^D(G)$. This proves the theorem.
\end{proof}

In the following, we discuss higher-order Steklov eigenvalues. First, we consider the case of finite graphs. Let $G = (V, E, w, m)$ be a weighted graph, $\Omega \subseteq V$ be a finite subset. Consider the graph $G_{\Omega}$.

For any $A \in \mathcal{A}(\overline{\Omega})$ satisfying $A \cap \delta \Omega \neq \varnothing$ and $f \in \mathbb{R}^{A \cap \delta \Omega}$, let $u_f^A$ be the solution of the following equation:
\begin{equation*}
 \begin{cases}
  \Delta u_f^A(x) = 0, \quad &x \in A \cap \Omega, \\
  u_f^A(x) = f(x), &x \in A \cap \delta \Omega, \\
  u_f^A(x) = 0, &x \in \overline{\Omega} \setminus A.
 \end{cases}
\end{equation*}
We define the DtN operator on $A$ via
\begin{align*}
 \Lambda_A: \mathbb{R}^{A \cap \delta \Omega} &\to \mathbb{R}^{A \cap \delta \Omega} \\
 f &\mapsto \Lambda_A(f):= \frac{\partial u_f^A}{\partial n},
\end{align*}
where $\frac{\partial}{\partial n}$ is taken w.r.t. $\Omega$. Denote by $\sigma_1^D(A)$ the smallest eigenvalue of $\Lambda_A$. By convention, when $A \cap \delta \Omega = \varnothing$, we set $\sigma_1^D(A) = + \infty$. Then for any $1 \leq k \leq |\delta \Omega|-1$, by \cite[Theorem 5]{HM2020}, there exists a universal constant $c>0$ such that
\begin{equation}\label{HMHighOrderEstimate}
 \frac{c}{k^6}\widetilde{\kappa}_{k+1} \leq \sigma_k(\Omega) \leq 2\widetilde{\kappa}_{k+1},
\end{equation}
where 
\[
 \widetilde{\kappa}_{k+1}:= \min_{(A_1, \cdots, A_{k+1}) \in \mathcal{A}_{k+1}(\overline{\Omega})}\max_{l \in \{1, \cdots, k+1\}}\sigma_1^D(A_l).
\]

Now, we are ready to prove Theorem \ref{HighOrderSteklovOnFinite}.
\begin{proof}[Proof of Theorem \ref{HighOrderSteklovOnFinite}]
 For each $A_l \in \mathcal{A}_{k+1}(\overline{\Omega})$, $l \in \{1, \cdots, k+1\}$, by applying Lemma \ref{CheegerInequalityForDS}, we have
 \[
 \frac14\alpha_{DS}^{\Omega}(A_l) \leq \sigma_1^D(A_l) \leq \alpha_{DS}^{\Omega}(A_l),
\]
where
\[
 \alpha_{DS}^{\Omega}(A_l) = \inf_{A \subseteq A_l \cap \delta \Omega}\frac{\mathrm{Cap}_{\Omega}(A, \delta_{\Omega}A_l)}{m(A)},
\]
and $\delta_{\Omega}A_l$ is the vertex boundary of $A_l$ in $G_{\Omega}$. Then the conclusion follows from \eqref{HMHighOrderEstimate}.
\end{proof}

At the end of this section, we consider the case of infinite subgraph. Let $U \subseteq V$ be an infinite subset with vertex boundary $\delta U$. Consider the graph $G_U$. Let $\mathcal{W}=\{W_i\}_{i=1}^{\infty} \uparrow \overline{U}$ be an exhaustion. Then for any $k \geq 1$, $\sigma_k(U) = \lim_{i \to \infty}\sigma_k^D(W_i)$, where $\sigma_k^D(W_i)$ is the $k$-th eigenvalue of operator $\Lambda_{W_i}$ defined in Section 5, see \eqref{DefinitionOfSigmaKOnWi}.

Fix $W_i$ and set $N = |W_i \cap \delta U|$. We construct a sequence of finite graphs $\{G^{(l)}\}_{l=1}^{\infty}$ where $G^{(l)} = (W_i \cup \delta_U W_i, E(W_i, W_i \cup \delta_U W_i), w, m^{(l)})$ with $m^{(l)}|_{W_i \cap \delta U} = m$ and $m^{(l)}|_{W_i \cap U} = \frac{m}{l}$. Consider the following Dirichlet problem
\begin{equation*}
 \begin{cases}
  -\Delta^{(l)}f(x) = \lambda f(x), \quad &x \in W_i, \\
  f(x) =0, &x \in \delta_U W_i,
 \end{cases}
\end{equation*}
where $\Delta^{(l)}$ is the Laplacian on $G^{(l)}$. Denote by $\lambda_{k, D}^{(l)}(W_i)$ the $k$-th eigenvalue of the above Dirichlet problem. Then according to \cite[Proposition 3]{HM2020}, for any $1 \leq k \leq N$, we have
\[
 \lim_{l \to \infty}\lambda_{k, D}^{(l)}(W_i) = \sigma_k^D(W_i).
\]
Set
\[
 \Gamma^S_k(W_i) = \min_{(A_1, \cdots, A_k) \in \mathcal{A}_k(W_i)}\max_{j \in \{1, \cdots, k\}}\alpha_{DS}^U(A_j).
\]
By applying Lemma \ref{HighOrderDirichlet} and Lemma \ref{CheegerInequalityForDS}, we have
\begin{align*}
 \sigma_k^D(W_i) &=\lim_{l \to \infty}\lambda_{k, D}^{(l)}(W_i) \geq \lim_{l \to \infty}\frac{c}{k^6}\min_{(A_1, \cdots, A_k)\in \mathcal{A}(W_i)}\max_{j \in \{1, \cdots, k\}}\lambda_{1, D}^{(l)}(A_j) \\
 &=\frac{c}{k^6}\min_{(A_1, \cdots, A_k) \in \mathcal{A}_k(W_i)}\max_{j \in \{1, \cdots, k\}}\sigma_1^D(A_j) \\
 &\geq \frac{c}{k^6}\min_{(A_1, \cdots, A_k) \in \mathcal{A}_k(W_i)}\max_{j \in \{1, \cdots, k\}}\frac14\alpha_{DS}^U(A_j) \\
 &=\frac{c'}{k^6}\Gamma^S_k(W_i),
\end{align*}
and
\[
 \sigma_k^D(W_i) = \lim_{l \to \infty}\lambda_{k, D}^{(l)}(W_i) \leq 2\Gamma^S_k(W_i),
\]
i.e., there exists a universal constant $c>0$ such that
\begin{equation}\label{HighOrderEstimateOfExhaustion}
 \frac{c}{k^6}\Gamma^S_k(W_i) \leq \sigma_k^D(W_i) \leq 2\Gamma^S_k(W_i).
\end{equation}

We can prove Theorem \ref{HighOrderSteklovOnInfinite}.
\begin{proof}[Proof of Theorem \ref{HighOrderSteklovOnInfinite}]
 One easily sees that $\Gamma^S_k(W_{i+1}) \leq \Gamma^S_k(W_i)$. Thus, the limit $\lim_{i \to \infty}\Gamma^S_k(W_i)$ exists. We only need to show that
 \[
  \lim_{i \to \infty}\Gamma^S_k(W_i) = \Gamma^S_k(U).
 \]
 Recall that 
 \[
  \Gamma^S_k(U) = \inf_{(A_1, \cdots, A_k) \in \mathcal{A}_k(\overline{U})}\max_{l \in \{1, \cdots, k\}}\alpha_{DS}^U(A_l).
 \]
 For any $(A_1, \cdots, A_k) \in \mathcal{A}_k(\overline{U})$, there exists $i \in \mathbb{N}_+$ such that $(A_1, \cdots, A_k) \in \mathcal{A}_k(W_i)$. Then we have 
 \[
  \max_{l \in \{1, \cdots, k\}}\alpha_{DS}^U(A_l) \geq \Gamma^S_k(W_i),
 \]
 and hence,
 \[
  \Gamma^S_k(U) \geq \lim_{i \to \infty}\Gamma^S_k(W_i).
 \]
 On the other hand, for any $i \in \mathbb{N}_+$, let $(A_1, \cdots, A_k)$ be a $k$-tuple such that 
 \[
  \Gamma^S_k(W_i) = \max_{l \in \{1, \cdots, k\}}\alpha_{DS}^U(A_l).
 \]
 Then $(A_1, \cdots, A_k) \in \mathcal{A}_k(\overline{U})$, and thus,
 \[
  \Gamma^S_k(W_i) \geq \Gamma^S_k(U).
 \]
 It follows that $\lim_{i \to \infty}\Gamma^S_k(W_i) \geq \Gamma^S_k(U)$. Then the conclusion follows from the fact that $\sigma_k(U) = \lim_{i \to \infty}\sigma_k^D(W_i)$ and \eqref{HighOrderEstimateOfExhaustion}.
\end{proof}

\begin{remark}
 One easily sees that
 \[
  \Gamma^S_k(W_i) \geq \alpha_{DS}^U(W_i).
 \]
 Thus, by Lemma \ref{LimitDSConstant}, we have
 \[
  \sigma_{k}(U) \geq \frac{c}{k^6}\alpha_S(U).
 \]
\end{remark}

\section{The DtN operator defined by Hassannezhad and Miclo}

In this section, we briefly discuss the eigenvalue problem of the DtN operator defined in \cite{HM2020}.

We first recall the definition of the DtN operator in \cite{HM2020}. Let $G = (V, E, w, m)$ be a finite graph, $\Omega \subseteq V$ be a proper subset. Given $f \in \mathbb{R}^{\Omega}$, let $u_f$ be its harmonic extension on $V$, namely the unique $u_f \in \mathbb{R}^V$ satisfying
\begin{equation*}
 \begin{cases}
  \Delta u_f(x) = 0, \quad &x \in V\setminus \Omega, \\
  u_f(x) = f(x), &x \in \Omega.
 \end{cases}
\end{equation*}
Then we define the DtN operator $S_{\Omega}$ on $\Omega$ as
\[
 \forall \ x \in \Omega, \quad S_{\Omega}f(x) = -\Delta u_f(x).
\]
For any $1 \leq k \leq |\Omega|-1$, we denote by $\sigma_k^S(\Omega)$ the $k$-th non-trivial eigenvalue of the operator $S_{\Omega}$. Given $f, g \in \mathbb{R}^V$, we define the energy functional via
\[
 \mathcal{E}(f, g):= \frac12\sum_{x, y \in V}w(x, y)(f(y)-f(x))(g(y)-g(x)).
\]
For any $A, B \subseteq V$, we define
\[
 \mathrm{Cap}(A, B):= \inf\{\mathcal{E}(f, f): f|_A = 1, f|_B = 0\}.
\]
Note that Lemma \ref{CoareaFormula} and Theorem \ref{CheegerEstimate} still hold on $G$. For $|\Omega| \geq 2$, set
\[
 \beta_S(\Omega):= \inf_{A, B \subseteq \Omega}\frac{\mathrm{Cap}(A, B)}{m(A) \wedge m(B)}.
\]
Then following the proof of Theorem \ref{CheegerInequalityForSteklov}, we have the following theorem.
\begin{theorem}
 Let $G$ be a finite weighted graph, $\Omega \subseteq V$ be a proper subset with $|\Omega| \geq 2$. Then
 \[
  \frac18\beta_S(\Omega) \leq \sigma_1^S(\Omega) \leq 2\beta_S(\Omega).
 \]
\end{theorem}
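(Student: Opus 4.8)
The plan is to mimic the proof of Theorem~\ref{CheegerInequalityForSteklov}, interchanging the roles played there by $\Omega$ and $\delta\Omega$: in the present setting $\Omega$ is the set carrying the DtN operator $S_\Omega$, while $V\setminus\Omega$ is the ``interior'' on which the harmonic extension lives. Since, as noted just before the statement, Lemma~\ref{CoareaFormula} and the estimate \eqref{CheegerEstimate} remain valid verbatim on the whole graph $G$ (with $\overline{\Omega}$ replaced by $V$ and $\mathrm{Cap}_\Omega$ by the capacity $\mathrm{Cap}$ of Section~7), the first non-trivial eigenvalue of $\mathcal L=-\Delta$ on $G$ obeys a Cheeger bound in terms of $\widetilde\alpha:=\inf_{A,B\subseteq V}\mathrm{Cap}(A,B)/(m(A)\wedge m(B))$.

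First I would introduce the rescaled graphs $G^{(k)}=(V,E,w,m^{(k)})$ with $m^{(k)}|_{\Omega}=m$ and $m^{(k)}|_{V\setminus\Omega}=\frac{m}{k}$, so that the interior measure is suppressed as $k\to\infty$. Writing $\lambda_1^{(k)}$ for the first non-trivial Laplacian eigenvalue on $G^{(k)}$, the estimate \eqref{CheegerEstimate} applied to each $G^{(k)}$ gives
\[
 \tfrac18\widetilde\alpha^{(k)}\le\lambda_1^{(k)}\le 2\widetilde\alpha^{(k)},\qquad
 \widetilde\alpha^{(k)}:=\inf_{A,B\subseteq V}\frac{\mathrm{Cap}(A,B)}{m^{(k)}(A)\wedge m^{(k)}(B)}.
\]
By \cite[Proposition 3]{HM2020}, adapted to this normalization exactly as in Section~5, one has $\lim_{k\to\infty}\lambda_1^{(k)}=\sigma_1^S(\Omega)$; here the constant function is the trivial eigenfunction of $S_\Omega$ (its harmonic extension is constant, so $S_\Omega$ annihilates it), matching the factoring-out of constants in the Rayleigh quotient for $\lambda_1^{(k)}$. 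Hence the theorem reduces to proving $\lim_{k\to\infty}\widetilde\alpha^{(k)}=\beta_S(\Omega)$.

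For this limit I would argue as in Lemma~\ref{MonotoneOfCapacity} and the proof of Theorem~\ref{CheegerInequalityForNeumann}. Monotonicity: since $m^{(k)}\ge m^{(k+1)}$ the quotients increase, so $\widetilde\alpha^{(k)}$ is non-decreasing, and it is bounded above by $\mathrm{Cap}(\{x_1\},\{x_2\})/(m(x_1)\wedge m(x_2))$ for any $x_1,x_2\in\Omega$ (this is where $|\Omega|\ge2$ enters); thus $\lim_k\widetilde\alpha^{(k)}$ exists. For $\beta_S(\Omega)\ge\lim_k\widetilde\alpha^{(k)}$: any $A,B\subseteq\Omega$ satisfy $m^{(k)}(A)=m(A)$ and $m^{(k)}(B)=m(B)$, so their quotient equals $\mathrm{Cap}(A,B)/(m(A)\wedge m(B))\ge\widetilde\alpha^{(k)}$, whence $\beta_S(\Omega)\ge\widetilde\alpha^{(k)}$ for every $k$. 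For the reverse inequality I would choose optimizers $A^{(k)},B^{(k)}\subseteq V$ for $\widetilde\alpha^{(k)}$; as $V$ is finite, along a subsequence $A^{(k)}=A$ and $B^{(k)}=B$ are fixed. Using monotonicity of capacity under shrinking the prescribed sets, $\mathrm{Cap}(A\cap\Omega,B\cap\Omega)\le\mathrm{Cap}(A,B)$, together with $m^{(k)}(A)\to m(A\cap\Omega)$ and $m^{(k)}(B)\to m(B\cap\Omega)$, gives
\[
 \lim_{k\to\infty}\widetilde\alpha^{(k)}
 \ge\lim_{k\to\infty}\frac{\mathrm{Cap}(A\cap\Omega,B\cap\Omega)}{m^{(k)}(A)\wedge m^{(k)}(B)}
 =\frac{\mathrm{Cap}(A\cap\Omega,B\cap\Omega)}{m(A\cap\Omega)\wedge m(B\cap\Omega)}\ge\beta_S(\Omega).
\]
Combining the two inequalities yields $\lim_k\widetilde\alpha^{(k)}=\beta_S(\Omega)$, and passing to the limit in the Cheeger bound proves the theorem.

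The main point requiring care is the final display: a priori $A\cap\Omega$ or $B\cap\Omega$ could be empty, making $m(A\cap\Omega)\wedge m(B\cap\Omega)=0$. However, boundedness of $\widetilde\alpha^{(k)}$ forces $\mathrm{Cap}(A,B)\le C\,(m^{(k)}(A)\wedge m^{(k)}(B))$ along the subsequence; if one intersection were empty the right-hand side would tend to $0$ while $\mathrm{Cap}(A,B)$ is a fixed positive number (positive because $G$ is connected and $A,B$ are disjoint non-empty optimizers), a contradiction. Thus both intersections are non-empty and $A\cap\Omega,B\cap\Omega\subseteq\Omega$ genuinely compete in the infimum defining $\beta_S(\Omega)$. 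The only other item to verify is the transfer step itself: confirming that \cite[Proposition 3]{HM2020} and the co-area-based estimate \eqref{CheegerEstimate}, originally phrased for the subgraph $G_\Omega$, apply without change on the full graph $G$ under the rescaling $m^{(k)}$, which is precisely what the remark preceding the statement asserts.
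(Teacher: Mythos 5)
Your proposal is correct and follows essentially the same route as the paper, which proves this theorem by repeating the argument of Theorem \ref{CheegerInequalityForSteklov}: rescaled measures $m^{(k)}$ suppressing $V\setminus\Omega$, the Cheeger estimate \eqref{CheegerEstimate} on each $G^{(k)}$, eigenvalue convergence via \cite[Proposition 3]{HM2020}, and the limit $\widetilde\alpha^{(k)}\to\beta_S(\Omega)$ exactly as in Theorem \ref{CheegerInequalityForNeumann}. Your closing observation that the optimizing sets must meet $\Omega$ (since otherwise $m^{(k)}(A)\wedge m^{(k)}(B)\to 0$ while $\mathrm{Cap}(A,B)>0$ by connectedness, contradicting the uniform bound on $\widetilde\alpha^{(k)}$) is a point the paper leaves implicit, and it is handled correctly.
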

For any $1 \leq k \leq |\Omega|-1$, set
\[
 \beta_{k+1}:= \min_{(A_1, \cdots, A_{k+1} \in \mathcal{A}_{k+1}(V)}\max_{l \in \{1, \cdots, k+1\}}\beta_{DS}(A_l),
\]
where 
\[
 \beta_{DS}(A_l):= \inf_{A \subseteq A_l \cap \Omega}\frac{\mathrm{Cap}(A, V \setminus A_l)}{m(A)}.
\]
Then following the proof of Theorem \ref{HighOrderSteklovOnFinite}, we have the following result.
\begin{theorem}
 Let $G$ be a finite weighted graph, $\Omega \subseteq V$ be a proper subset. Then there exists a universal constant $c>0$ such that for any $1 \leq k \leq |\Omega|-1$,
 \[
  \frac{c}{k^6}\beta_{k+1} \leq \sigma_k^S(\Omega) \leq 2\beta_{k+1}.
 \]
\end{theorem}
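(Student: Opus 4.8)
The plan is to mirror the proof of Theorem \ref{HighOrderSteklovOnFinite} in the present setting. The starting point is the higher-order Cheeger estimate of Hassannezhad and Miclo, \cite[Theorem 5]{HM2020}, which applies directly to the operator $S_{\Omega}$ and yields a universal constant $c>0$ with
\[
 \frac{c}{k^6}\widehat{\kappa}_{k+1} \leq \sigma_k^S(\Omega) \leq 2\widehat{\kappa}_{k+1}, \qquad 1 \leq k \leq |\Omega|-1,
\]
where
\[
 \widehat{\kappa}_{k+1} := \min_{(A_1, \cdots, A_{k+1}) \in \mathcal{A}_{k+1}(V)}\max_{l \in \{1, \cdots, k+1\}}\sigma_1^S(A_l),
\]
and $\sigma_1^S(A_l)$ denotes the smallest eigenvalue of the local DtN operator on $A_l$, obtained from the harmonic extension that interpolates the boundary data on $A_l \cap \Omega$, is harmonic on $A_l \setminus \Omega$, and vanishes on $V \setminus A_l$. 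Thus it remains only to compare each local quantity $\sigma_1^S(A_l)$ with the isocapacitary constant $\beta_{DS}(A_l)$.

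Next I would establish the single-set estimate
\[
 \frac14\beta_{DS}(A_l) \leq \sigma_1^S(A_l) \leq \beta_{DS}(A_l)
\]
by repeating the argument of Lemma \ref{CheegerInequalityForDS} almost verbatim, with interior and boundary vertices taken in the Hassannezhad--Miclo convention. Concretely, for fixed $A_l$ one constructs finite graphs $G^{(j)}$ on the same edge set with vertex weights $m^{(j)}$ equal to $m$ on $A_l \cap \Omega$ and equal to $\frac{m}{j}$ on $A_l \setminus \Omega$. Imposing a Dirichlet condition on $V \setminus A_l$, the first Dirichlet eigenvalue $\lambda_1^{(j)}(A_l)$ of $G^{(j)}$ converges to $\sigma_1^S(A_l)$ by the adaptation of \cite[Proposition 3]{HM2020} already used earlier, while Theorem \ref{CheegerInequalityForDirichlet} gives $\frac14\alpha_D^{(j)}(A_l) \leq \lambda_1^{(j)}(A_l) \leq \alpha_D^{(j)}(A_l)$ with $\alpha_D^{(j)}(A_l) = \inf_{A \subseteq A_l}\frac{\mathrm{Cap}(A, V \setminus A_l)}{m^{(j)}(A)}$. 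Passing to the limit $j \to \infty$, exactly as in the proof of Theorem \ref{CheegerInequalityForNeumann}, identifies $\lim_{j\to\infty}\alpha_D^{(j)}(A_l)$ with $\beta_{DS}(A_l) = \inf_{A \subseteq A_l \cap \Omega}\frac{\mathrm{Cap}(A, V \setminus A_l)}{m(A)}$, since the shrinking weights on $A_l \setminus \Omega$ force every optimizing subset to concentrate on $A_l \cap \Omega$.

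Finally I would combine the two displays. Taking the minimax over $(k+1)$-tuples and using the two-sided comparison for each entry gives $\frac14\beta_{k+1} \leq \widehat{\kappa}_{k+1} \leq \beta_{k+1}$; substituting into the Hassannezhad--Miclo estimate yields $\frac{c}{4k^6}\beta_{k+1} \leq \sigma_k^S(\Omega) \leq 2\beta_{k+1}$, and relabelling the constant proves the theorem. I expect the main obstacle to be the limit step identifying $\lim_{j\to\infty}\alpha_D^{(j)}(A_l)$ with $\beta_{DS}(A_l)$: one must check that the infimum defining $\alpha_D^{(j)}$, taken over \emph{all} subsets of $A_l$, genuinely converges to the infimum restricted to subsets of $A_l \cap \Omega$ as the interior weights vanish, and that the surviving capacity term matches $\mathrm{Cap}(A \cap \Omega, V \setminus A_l)$. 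This is the same compactness-plus-monotonicity argument as in Lemma \ref{MonotoneOfCapacity} and the proof of Theorem \ref{CheegerInequalityForNeumann}, so I anticipate no new difficulty, only careful bookkeeping with the reversed roles of $\Omega$ and its complement.
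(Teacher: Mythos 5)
Your proposal is correct and takes essentially the same route as the paper, which proves this theorem by repeating the proof of Theorem \ref{HighOrderSteklovOnFinite}: the higher-order estimate of \cite[Theorem 5]{HM2020} applied to $S_{\Omega}$, combined with the per-set comparison $\frac14\beta_{DS}(A_l) \leq \sigma_1^S(A_l) \leq \beta_{DS}(A_l)$ obtained through the vanishing-vertex-weight limit as in Lemma \ref{CheegerInequalityForDS}, and then the sandwich $\frac14\beta_{k+1} \leq \widehat{\kappa}_{k+1} \leq \beta_{k+1}$. The limit identification you flag as the main obstacle is indeed settled by the same monotonicity-plus-compactness argument as in Lemma \ref{MonotoneOfCapacity} and the proof of Theorem \ref{CheegerInequalityForNeumann}, exactly as you anticipate.
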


\section*{Acknowledgments}
We thank Matthias Keller for pointing out approximating Neumann eigenvalues by taking the vanishing limit of vertex weights on the boundary. The third author would like to thank the Max Planck Institute for Mathematics in the Sciences for the hospitality. B. Hua is supported by NSFC, no. 12371056, and by Shanghai Science and Technology Program[Project No. 22JC1400100]. 

\vspace{1em}
\noindent \textbf{Conflict of Interest: }The authors have no competing interests to declare that are relevant to the content of this article. 

\vspace{1em}
\noindent \textbf{Data Availability: }No data, models, or code were used during the study.

\bibliographystyle{plain}
\bibliography{CheegerEstimate}

\end{document}